\newtheorem{thm}[equation]{Theorem}
\newtheorem*{thm*}{Theorem}
\newtheorem{thmA}{Theorem}
\newtheorem{lem}[equation]{Lemma}
\newtheorem{prop}[equation]{Proposition}
\newtheorem{conj}{Conjecture}
\newtheorem{ques}[equation]{Question}
\theoremstyle{definition}
\newtheorem*{rmks}{Remarks}
\newtheorem*{open}{Open problem}
\newtheorem{ex}[equation]{Example}
\numberwithin{equation}{section}
\DeclareMathOperator{\id}{id}
\DeclareMathOperator{\Hom}{Hom}
\DeclareMathOperator{\Aut}{Aut}
\DeclareMathOperator{\Lip}{Lip}
\DeclareMathOperator{\Map}{Map}
\DeclareMathOperator{\lengt}{length}
\DeclareMathOperator{\vol}{vol}
\DeclareMathOperator{\img}{im}
\DeclareMathOperator{\st}{st}
\DeclareMathOperator{\lk}{lk}
\DeclareMathOperator{\rk}{rk}
\DeclareMathOperator{\mass}{mass}
\DeclareMathOperator{\Dil}{Dil}
\newcommand{\ph}{\varphi}
\newcommand{\epsi}{\varepsilon}
\begin{document}
\title{Plato's cave and differential forms}
\author{Fedor Manin}
\address{Department of Mathematics, Ohio State University, Columbus, Ohio, USA}
\email{manin@math.toronto.edu}
\begin{abstract}
  In the 1970s and again in the 1990s, Gromov gave a number of theorems and
  conjectures motivated by the notion that the real homotopy theory of compact
  manifolds and simplicial complexes influences the geometry of maps between
  them.  The main technical result of this paper supports this intuition: we show
  that maps of differential algebras are closely shadowed, in a technical sense,
  by maps between the corresponding spaces.  As a concrete application, we prove
  the following conjecture of Gromov: if $X$ and $Y$ are finite complexes with
  $Y$ simply connected, then there are constants $C(X,Y)$ and $p(X,Y)$ such that
  any two homotopic $L$-Lipschitz maps have a $C(L+1)^p$-Lipschitz homotopy (and
  if one of the maps is constant, $p$ can be taken to be $2$.)  We hope that it
  will lead more generally to a better understanding of the space of maps from
  $X$ to $Y$ in this setting.
\end{abstract}
\maketitle

\section{Introduction}

In 1996, Princeton University invited several prominent mathematicians, including
Misha Gromov, to give a series of lectures entitled ``Prospects in Mathematics'';
each speaker would discuss their views on future directions in their field.
Gromov's talk was entitled ``Quantitative homotopy theory''\footnote{Actually,
  due to a mistake somewhere along the way, the title of the talk was given as
  ``Qualitative homotopy theory''---exactly the opposite of what Gromov meant!
  The notes were published in the proceedings of the conference as \cite{GrQHT}.}
and advanced the idea that the central questions of algebraic topology---such as,
are two maps homotopic?---should be refined by asking about the sizes of the
objects produced.

Indeed, a central weakness in the extremely powerful results of algebraic and
geometric topology is their indirectness: they are obtained by reducing geometric
problems to those of homotopy theory and homotopy problems to algebra, leaving us
very little understanding of the shapes of the solutions, or whether finding the
correlates of these solutions back in the geometric world is easy or hard.  A
beautiful example of this phenomenon is the result of Nabutovsky \cite{NabSph}
that, although every embedded codimension-one sphere in the $n$-disk ($n>4$) is
isotopic to the boundary due to Smale's $h$-cobordism theorem, the complexity of
such an isotopy (measured by the size of an embedded normal bundle) cannot be
bounded by any computable function of the complexity of the original embedded
sphere.

Nabutovsky's result follows from the unsolvability of the triviality problem for
groups: while the embedded sphere is simply connected, it has no way of knowing
that it is (i.e.~no algorithm can distinguish it from a homology sphere with
nontrivial $\pi_1$) and in particular it cannot know that it is isotopic to the
boundary.  With objects that are aware of their simple connectivity, homotopy
theory tends to be more computationally tractable\footnote{But this does not mean
  tractable in an absolute sense; see \cite{CWetal} and \cite{FiVo} for two
  contrasting perspectives.}, and the solutions are correspondingly less complex.
This seems to stem from the fact that the algebra describing them is commutative.

Indeed, in the setting of this paper, geometric complexity is controlled quite
closely by the algebraic structure of maps.  One of the most important tools for
studying simply connected spaces is rational homotopy theory, first developed by
Quillen and Sullivan in the 1970s.  Almost immediately, Gromov realized that
Sullivan's machine has geometric consequences, providing lower bounds on the
complexity of maps in a given homotopy class, and conjectured that these lower
bounds are sharp.  In the intervening years these ideas have been developed
further by Gromov, and more recently by Weinberger, Ferry, Chambers, Dotterrer,
Guth, and the author.

This paper seeks to strengthen the link between rational homotopy and geometry by
showing a kind of inverse result to the reduction to algebra: that, at least in
the world of compact spaces, the algebraic maps produced by the theory always
have reasonably close geometric doppelgangers.  This is robust enough to provide
an almost immediate proof of one of the conjectures in Gromov's talk.  We also
provide applications which nibble at the margins of some other problems; it is
the author's hope that with more effort and new techniques, other broader
applications will be found.

Our results do not directly resolve any metric problems beyond homotopy theory;
in settings such as cobordism or embedding, one also needs to analyze the
reduction to homotopy theory.  However, again the hope is that these results will
simplify that task to some degree.

\subsection{Gromov's conjectures}

Gromov's 1990s questions have roots stretching back to the 1978 paper
\cite{GrHED}, where he showed the following result:
\begin{thm*}
  Let $Y$ be a simply connected finite complex with a (reasonable) metric.  Then
  $\pi_n(Y)$ has \emph{polynomial growth}, i.e.~the number of elements which have
  a representative with Lipschitz constant at most $L$ is bounded by a polynomial
  in $L$. 
\end{thm*}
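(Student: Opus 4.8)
The plan is to prove polynomial growth of $\pi_n(Y)$ for each fixed $n$, by reducing through rational homotopy theory to a ``generalized Hopf invariant'' bound whose proof rests on a uniform bounded-primitive estimate for differential forms on the round sphere. First I would dispose of torsion: since $Y$ is a simply connected finite complex, Serre's finiteness theorem gives $\pi_n(Y) \cong \mathbb{Z}^r \oplus T$ with $T$ finite, and the number of homotopy classes admitting an $L$-Lipschitz representative is at most $|T|$ times the number of distinct images of such classes in $\mathbb{Z}^r$. So it suffices to fix a basis of $\mathbb{Z}^r$---equivalently, a $\mathbb{Z}$-basis $v_1,\dots,v_r$ of the lattice $\Hom(\pi_n(Y),\mathbb{Z})$ sitting inside $V^n := \Hom(\pi_n(Y),\mathbb{Q})$---and show that an $L$-Lipschitz map has coordinates bounded by a polynomial in $L$ in this basis.

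Next I would detect these coordinates by integration of forms. Fix the Sullivan minimal model $\mathcal{M}_Y = (\Lambda V,d)$ of $Y$; its degree-$n$ generators form exactly the space $V^n$, and the generators in degrees $\le n$ are finite in number. Fix a quasi-isomorphism $\Phi$ from the sub-DGA generated in degrees $\le n$ to the DGA of piecewise-smooth forms on $Y$, taken with respect to a fixed simplicial structure and piecewise-flat metric realizing the given ``reasonable'' one; each generator $w$ then has a fixed image $\Phi(w)$ of finite comass $M_w$. Given an $L$-Lipschitz map $f \colon S^n \to Y$, which we may take smooth by mollification at the cost of a bounded factor in $L$, the composite $f^*\Phi$ is a DGA map $(\Lambda V_{\le n},d) \to \Omega^*(S^n)$, and it is DGA-homotopic to the realization of the algebraic map $\mathcal{M}_Y \to \mathcal{M}_{S^n}$ induced by $f$; the latter sends each $v_i$ to $\lambda_i$ times the fundamental class of $S^n$, and by the fundamental theorem of rational homotopy theory $(\lambda_1,\dots,\lambda_r)$ are precisely the coordinates we want to bound.

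The core step is to construct this DGA-homotopy $H \colon (\Lambda V_{\le n},d) \to \Omega^*(S^n\times I)$ with explicit norm control. Building $H$ over the generators in order of increasing degree, at each stage one must choose a primitive on $S^n$ of an exact form, the exactness being guaranteed by the abstract existence of the homotopy. Here I would use the Lemma that on the round $S^n$ there is, for each $k$, a linear operator $P$ sending an exact $k$-form $\gamma$ to a $(k-1)$-form with $dP\gamma = \gamma$ and $\|P\gamma\|_\infty \le C_{n,k}\|\gamma\|_\infty$; one proves this by patching the explicit bounded Poincar\'e homotopy operators on two hemispherical balls, inducting on dimension through $S^{n-1}$, and using that $S^n$ carries no harmonic forms in intermediate degrees. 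Feeding $P$ into the construction, together with the pullback bound $\|f^*\Phi(w)\|_\infty \le M_w L^{\deg w}$ and the submultiplicativity of comass under wedge products, one gets inductively $\|H(w)\|_\infty \le C_w L^{\deg w}$ with $C_w$ depending only on $Y$ and $n$. Unwinding the homotopy via Stokes on $S^n\times I$, $\lambda_i$ equals $\int_{S^n} f^*\Phi(v_i)$ plus a correction of the form $\int_{S^n\times I} H(dv_i)$, and since $dv_i$ is a decomposable of degree $n+1$ in lower generators both terms are $\le C(Y,n)L^{n+1}$. Hence $|\lambda_i| \le C(Y,n) L^{n+1}$ for each $i$, so there are at most $|T|\cdot(2C(Y,n)L^{n+1}+1)^r$ homotopy classes with an $L$-Lipschitz representative, which is the desired polynomial bound.

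The main obstacle is the bounded-primitive Lemma and its use in the inductive construction of $H$: one needs the primitive of an exact form on $S^n$ to be controlled by the \emph{same} norm of the form---with no loss of powers of $L$---uniformly across the finitely many stages and all degrees that occur. Since no general $d^{-1}$ is bounded pointwise by an unintegrated norm of its argument, one is pushed to the hemisphere-patched (or Hodge-theoretic) primitive and must verify that the harmonic obstruction genuinely vanishes in every relevant degree and that the resulting products stay integrable; keeping the book on which norm and which power of $L$ propagates through each stage is where the real work lies. The remaining wrinkles---that $Y$ is a complex rather than a manifold (use piecewise-smooth forms) and that $f$ is only Lipschitz (mollify)---are routine.
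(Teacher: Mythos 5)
Your overall strategy is the same as the paper's (which is Gromov's): build a Sullivan minimal model, construct a controlled DGA-homotopy from $f^*\Phi$ to the degree-$n$ ``answer'' by induction on the degree of indecomposables, using a bounded-primitive (quantitative Poincar\'e) operator at each stage, and read off the coordinates $\lambda_i$ as a homotopy-period obstruction. The paper's Section~3 makes exactly this argument, with the bounded-primitive lemma (its Lemma 2.1) proved by patching explicit Poincar\'e operators skeleton by skeleton. Both routes are sound in structure.

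However, your claimed inductive bound $\|H(w)\|_\infty \le C_w L^{\deg w}$ is false in degrees $\ge 3$, and the correction matters for the exponent you assert. The primitive operator $P$ on the unit round sphere satisfies $\|P\gamma\|_\infty \lesssim \|\gamma\|_\infty$ --- it lowers the \emph{degree} of a form by one but does not lower its \emph{comass} by a factor of $L$. So if $x$ is a degree-2 generator with $\|f^*\Phi(x)\|_\infty \lesssim L^2$, then $P(f^*\Phi(x))$ is a 1-form of comass $\lesssim L^2$, not $\lesssim L$. When you later wedge this primitive with $f^*\Phi(x)$ (as happens in the formula for $H$ on a degree-3 generator with $dy = x^2$), you produce a 3-form of comass $\lesssim L^4$, not $L^3$. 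Tracking this through the induction gives $\|H(w)\|_\infty \lesssim L^{2\deg w - 2}$: the recursion is $e(k) = \max\bigl\{k,\ \sum_i e(a_i) : \textstyle\sum_i a_i = k+1,\ a_i \ge 2\bigr\}$ with $e(2) = 2$, which resolves to $e(k) = 2k-2$, maximized when $dw$ is a product of two factors. This is exactly equation (3.9) in the paper. Consequently the true bound on the coordinates is $|\lambda_i| \lesssim L^{2n-2}$, not $L^{n+1}$; the two happen to agree at $n=3$ (both give $L^4$, matching the Hopf-invariant case you surely had in mind), but for $n \ge 4$ your exponent is too small and the induction as written does not close. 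The conclusion of polynomial growth is unaffected, since $L^{2n-2}$ is still a polynomial and your final count becomes $|T|\cdot(2CL^{2n-2}+1)^r$, but the quantitative claim in your third paragraph should be corrected, and the ``one gets inductively'' step needs the honest exponent.

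Two smaller remarks. First, your assertion that the bounded primitive can be taken to be a \emph{linear} operator $P$ is a convenience, not a necessity; the paper's Lemma 2.1 merely asserts existence of a primitive with the right bound, and that suffices. Second, the Serre-finiteness reduction and the final counting are fine, but you should note that the kernel of $\pi_n(Y) \to V_n^* \cong \Hom(\pi_n(Y),\mathbb{R})$ is the torsion $T$ and the image is a lattice of full rank and positive covolume; this is what lets you pass from the real bound on $\lambda_i$ to a count of integral classes, and the paper flags it explicitly.
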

There are several reasons why the Lipschitz constant is a natural notion of
complexity here.  First, if two such spaces are homotopy equivalent, then they
are Lipschitz homotopy equivalent; this means that Lipschitz invariants such as
the asymptotics of the growth function of $\pi_n(Y)$ are as natural as homotopy
invariants in this context.  Conversely, every $L$-Lipschitz map from $S^n$ is
homotopic to a simplical map on a subdivision with $\sim L^n$ simplices; see
Prop.~\ref{prop:qSAT}.  This means that the number of bits of information needed
to (homotopically) specify an $L$-Lipschitz map is $\Theta(L^n)$.  In this
framing, Gromov's result says that for simply connected targets, this is a
significant overestimate.\footnote{On the other hand, it is sharp for examples
  like $\pi_n(T^n \vee S^n)$: there are $\Theta(\exp(L^n))$ homotopy classes of
  maps with Lipschitz constant $L$.}  On a more geometric level, the Lipschitz
constant bounds the sizes of pullbacks of forms; this is its main property used
in the proof.

Gromov's theorem vastly generalizes an observation about Hopf invariants.
Suppose that $f:S^3 \to S^2$ is a smooth $L$-Lipschitz map between round unit
spheres.  Denote the volume form on $S^2$ by $d\vol$; $f^*d\vol$ is a closed
2-form in $S^3$ and therefore $f^*d\vol=d\alpha$ for some 1-form $\alpha$.  Then,
following J.H.C.~Whitehead, the Hopf invariant of $f$ is given by
$$H(f)=\int_{S^3} \alpha \wedge f^*d\vol.$$
Now we look at the $L^\infty$-norms of these forms (i.e.\ the supremum of their
values taken over all frames of unit vectors.)  We know
$\lVert f^*d\vol\rVert_\infty \leq L^2$ and we can choose $\alpha$ so that
$\lVert\alpha\rVert_\infty \leq C\lVert f^*d\vol \rVert_\infty$.\footnote{The
  latter inequality is not entirely obvious and Gromov did not prove it in this
  paper, although he did give a sketch in \cite{GrMS}; we prove it in Section 2.}
Therefore we have the inequality
$$H(f) \leq C\vol(S^3)L^4.$$
Indeed, up to a constant, this is sharp: a map with Lipschitz constant $O(L)$ and
Hopf invariant $L^4$ can be built via the composition
$$S^3 \xrightarrow{\text{Hopf map}} S^2 \xrightarrow{\text{degree }L^2} S^2.$$
To Gromov, this and other examples suggested the following conjecture:
\begin{conj}[{\cite{GrQHT}}] \label{conj:growth}
  The estimate on the growth of $\pi_n(Y)$ provided by the method of \cite{GrHED}
  is sharp.
\end{conj}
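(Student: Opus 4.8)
The plan is to read Gromov's argument as an upper bound by an \emph{algebraic count} and to use the shadowing theorem of this paper to realize that count geometrically; the two bounds will meet at a single exponent $\alpha_n$ read off the minimal model.

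First I would make the upper bound precise. Given an $L$-Lipschitz map $f\colon S^n\to Y$, Prop.~\ref{prop:qSAT} replaces it by a simplicial map on an $O(L^n)$-simplex subdivision of $S^n$; analyzing this through the minimal Sullivan model $\mathcal M_Y=(\Lambda V,d)$ and normalizing the induced forms with the primitive-bounding inequality of Section~2 yields a complete rational invariant $(k_v)_{v\in V^n}$ of $f$ subject to $|k_v|\le CL^{w(v)}$, where the \emph{weight} $w$ is defined by induction on degree via $w(v)=\deg v$ when $dv=0$ and $w(v)=\max\bigl\{\sum_j w(v_{i_j}):v_{i_1}\cdots v_{i_r}\text{ a monomial of }dv\bigr\}$ otherwise, each descent through a generator costing one application of the primitive bound. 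Since these constraints are independent, the number of homotopy classes of $L$-Lipschitz maps $S^n\to Y$ is $O(L^{\alpha_n})$, where $\alpha_n=\sum_v w(v)$ over a basis of $V^n$ and the torsion of $\pi_n(Y)$ contributes only an $L$-independent factor. This $\alpha_n$ is the exponent whose optimality Conjecture~\ref{conj:growth} predicts, so it remains to produce $\asymp L^{\alpha_n}$ classes.

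For the lower bound I would realize, for each basis vector $v\in V^n$ and each integer $0\le k\le cL^{w(v)}$, a map $f_{v,k}\colon S^n\to Y$ of Lipschitz constant $O(L)$ whose invariant is $k$ in the $v$-coordinate and $0$ elsewhere; pinching $S^n$ onto a wedge of copies of itself and placing one $f_{v,k_v}$ on each summand then realizes an arbitrary tuple, and as the tuple ranges over $\prod_v[0,cL^{w(v)}]$ these are $\asymp L^{\alpha_n}$ pairwise rationally-distinct classes. To build $f_{v,k}$, unwind the recursion for $w(v)$: it records a tree of spherical classes of degrees $a_1,\dots,a_m$ with $\sum_\ell a_\ell=w(v)$, which I would realize as a finite complex $W_v$ assembled from spheres $S^{a_1},\dots,S^{a_m}$ by iterated Whitehead products, carrying a fixed $O(1)$-Lipschitz map $g_v\colon W_v\to Y$ and a class $\iota_v\in\pi_n(W_v)$ that $g_v$ sends onto a generator of the $v$-line and kills in every other coordinate. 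Precomposing $\iota_v$ with a self-map of $W_v$ of degree $d_\ell$ on its $\ell$-th sphere scales the $v$-invariant by a fixed monomial in the $d_\ell$'s at Lipschitz cost $\max_\ell d_\ell^{1/a_\ell}$; combining this with domain-pinching and elementary number-theoretic identities (greedy division, and Waring's theorem when an edge of the tree repeats a generator) writes every integer up to $\asymp L^{w(v)}$ as a bounded sum of such values, realized with Lipschitz constant $O(L)$. Equivalently and more conceptually, one encodes each $f_{v,k}$ as a map of differential algebras into a suitably metrized model of $S^n$ and applies the shadowing theorem to pass to an honest map; this is where the main technical result of the paper does the work, and it is the cleanest way to get the exponent exactly right.

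The main obstacle I anticipate is precisely the duality between the two halves: showing that the weight $w$, which falls out of the greedy primitive estimates used for the upper bound, is matched \emph{on the nose}---with no loss or gain of a power of $L$---by the Whitehead-product construction used for the lower bound. Making this precise forces one to run the lower-bound construction not on the round $S^n$ but on a metrized algebraic model of it, calibrated so that the shadowing theorem turns ``algebraic size $L$'' into ``Lipschitz constant $O(L)$'' exactly; that calibration, the number-theoretic accounting of which invariant values are cheaply realizable, and the harmless bookkeeping of torsion are where the real work lies. With these in hand the two bounds coincide and Conjecture~\ref{conj:growth} follows.
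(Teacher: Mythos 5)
This statement is labeled a \emph{conjecture} in the paper, and it remains one: the paper does not prove it, and in fact explicitly discusses why it cannot currently be proved. In \S1.1 the paper says the conjecture ``remains an impressionistic, ill-defined one'' because no algorithmic description of Gromov's method has been pinned down, and \S\ref{S:outside} explains that even the main technical result of the paper (the shadowing principle) reduces the problem to DGA questions that ``seem potentially just as hard to decide as the original questions about maps and homotopies.'' The only cases the paper actually settles are the symmetric-space results of Theorem~\ref{summary:sym}, which use very special self-maps. A blind ``proof'' of the conjecture is therefore already a red flag.

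Your proposed proof has a concrete, fatal gap, and the paper itself supplies the counterexample. You define a greedy weight $w(v)$ by $w(v)=\deg v$ when $dv=0$ and $w(v)=\max\bigl\{\sum_j w(v_{i_j})\bigr\}$ over monomials of $dv$ otherwise, and claim the sharp exponent for the contribution of a generator $v$ is $L^{w(v)}$. Run this on the paper's space $\mathbf{NF}$ with
$\mathcal{M}_{\mathbf{NF}}^*=\langle x^{(3)},y^{(3)},z^{(5)},T^{(10)},\ldots \mid dx=dy=0,\ dz=xy,\ dT=xyz,\ldots\rangle$:
you get $w(x)=w(y)=3$, $w(z)=6$, and $w(T)=w(x)+w(y)+w(z)=12$. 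But the paper shows, by subtracting the exact form $d\bigl(c(z)\wedge f^*m_{\mathbf{NF}}z\bigr)$, that the true sharp bound on $\langle T,f\rangle$ is $L^{11}$, not $L^{12}$. So your upper bound is not sharp, your weight does not agree with the actual invariant of Gromov's method, and the two halves of your argument cannot ``meet at a single exponent.'' This is exactly the example the paper uses (\S\ref{S:pi_k}) to demonstrate that the naive Sullivan-style algorithm overestimates.

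Your lower-bound strategy has a second, independent gap. You propose to realize every invariant via iterated Whitehead products into a complex $W_v$ built from spheres, scaled by degree maps. But the paper's discussion in \S\ref{S:D&G} emphasizes that the generator $\tau\in\pi_{10}(\mathbf{NF})$ pairing with $T$ is ``one of the simplest examples of a rational homotopy class which is not a generalized Whitehead product,'' and it takes a delicate ad hoc argument---two different nullhomotopies of a Whitehead product glued together, plus an iterated self-map scheme producing a geometric series of Lipschitz costs---to realize even the distortion bound $L^{11}$. There is no known uniform ``tree of spheres'' construction producing all rational homotopy classes with the sharp Lipschitz constant, which is precisely why the paper leaves this as an open problem rather than a theorem.

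{document}
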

\noindent Thus far, we have failed to give an algorithmic description of this
method and thus this conjecture remains an impressionistic, ill-defined one.  In
particular, in \S\ref{S:pi_k} we give an example where a candidate algorithm
based on the work of Sullivan fails to produce the correct bound.  Nevertheless,
the examples in that section illustrate the intuition that suggests that an
algorithm can be found.

Gromov returned to this theme in the 1990s in \cite[Ch.~7]{GrMS} and the
conference paper \cite{GrQHT}.  In these works he presented two other conjectures
which are relevant to the present work.  The first concerns a cousin of the
growth of homotopy groups.  Given an element $\alpha \in \pi_n(Y)$, define the
\emph{distortion function}
$$\delta_\alpha(k)=\inf\{\Lip f \mid f:S^n \to Y,[f]=k\alpha \}.$$
Note that in all cases $\delta_\alpha(k)=O(k^{1/n})$; this is because one can
always find a representative of $k\alpha$ by precomposing a representative of
$\alpha$ with a degree $k$ map $f_k:S^n \to S^n$ with $\Lip f_k=O(k^{1/n})$.  On
the other hand, for some $\alpha$ one can do better; we say such $\alpha$ are
\emph{distorted}, whereas those for which $\delta_\alpha(k)=\Theta(k^{1/n})$ are
\emph{undistorted}.  In this language, what Gromov showed in \cite{GrHED} is that
the generator of $\pi_{2n+1}(S^n)$ is distorted, and also that when $Y$ is simply
connected, $\delta_\alpha$ for an element $\alpha \in \pi_n(Y)$ is always
$\Omega(k^{1/(2n)})$.
\begin{conj}[\!\!\!{\cite{GrQHT}}] \label{conj:dist}
  When $Y$ is simply connected, an element $\alpha \in \pi_n(Y)$ is undistorted
  if and only if it has nonzero image under the Hurewicz map to
  $H_n(Y;\mathbb{Q})$.  If it is distorted, then $\delta_\alpha(k)=O(k^{1/n+1})$.
\end{conj}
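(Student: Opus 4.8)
The plan is to prove both directions of the equivalence together with the quantitative estimate, the ``nonzero Hurewicz image'' direction being essentially Gromov's form‑pullback argument and the other direction a rational‑homotopy reduction to an explicit geometric construction. Write $h\colon\pi_n(Y)\to H_n(Y;\mathbb{Q})$ for the Hurewicz map. If $h(\alpha)\neq0$, choose a closed $n$-form $\omega$ on $Y$ with $\langle[\omega],h(\alpha)\rangle\neq0$ and $\lVert\omega\rVert_\infty\le C(Y)$ (possible since $Y$ is a fixed compact complex); for any $L$-Lipschitz $f\colon S^n\to Y$ with $[f]=k\alpha$,
\[
  k\,\langle[\omega],h(\alpha)\rangle=\int_{S^n}f^*\omega,
\qquad
  \Bigl|\int_{S^n}f^*\omega\Bigr|\le\lVert\omega\rVert_\infty\,L^n\,\vol(S^n),
\]
so $\Lip f\ge c(Y,\alpha)\,k^{1/n}$, and together with the universal bound $\delta_\alpha(k)=O(k^{1/n})$ this gives $\delta_\alpha(k)=\Theta(k^{1/n})$, i.e.\ $\alpha$ is undistorted.

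For the converse I would show that $h(\alpha)=0$ forces $\delta_\alpha(k)=O(k^{1/(n+1)})$, which is in particular $o(k^{1/n})$. Let $(\Lambda V,d)$ be the minimal model of $Y$, so $V^n\cong(\pi_n(Y)\otimes\mathbb{Q})^\vee$. Split $V^n$ into closed and non-closed generators; using minimality (so $d$ has no linear part), a closed degree-$n$ cocycle of $\Lambda V$ has linear part supported on the closed generators, and the image of the dual Hurewicz map $H^n(Y;\mathbb{Q})\to V^n$ is exactly their span, so $\ker(h)\otimes\mathbb{Q}$ is the span of the duals of the non-closed degree-$n$ generators. Dualizing the quadratic part of $d$ on such a generator yields an ordinary Whitehead product, and dualizing the higher parts yields a higher Whitehead product; hence $\alpha$ lies, rationally, in the span of such products of homotopy classes of $Y$ of degree $<n$. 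After multiplying by a bounded integer and correcting by a bounded torsion element, it therefore suffices to realize the multiples of a single such product with Lipschitz constant $O(k^{1/(n+1)})$, and then to combine finitely many of these, together with a bounded remainder class, via a fixed pinch map $S^n\to\bigvee S^n$ and the corresponding map of the wedge into $Y$.

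To realize the multiples of a fixed product $P=[\beta_1,\dots,\beta_j]$ with $\beta_i\in\pi_{p_i}(Y)$ and $\sum_i p_i=n+j-1$, I would use (multi)linearity of the bracket: $[d_1\beta_1,\dots,d_j\beta_j]=(d_1\cdots d_j)\,P$. Given $k$, pick positive integers $d_i=\Theta(k^{p_i/(n+j-1)})$ with $\prod_i d_i=\Theta(k)$; represent each $d_i\beta_i$ by a fixed bounded-complexity representative of $\beta_i$ precomposed with a degree-$d_i$ self-map of $S^{p_i}$ of Lipschitz constant $O(d_i^{1/p_i})$; and feed these into the standard fat-wedge model for $P$. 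Every ingredient, and hence the assembled representative of $k\alpha$, then has Lipschitz constant $O(k^{1/(n+j-1)})\le O(k^{1/(n+1)})$, with the implied constants depending only on $Y$ and $\alpha$; the extreme case $j=2$ produces the exponent $1/(n+1)$, which by Gromov's Hopf-invariant lower bound is sharp for classes like the Hopf map and the Whitehead squares.

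The crux is the higher-order case of the last step: for an ordinary Whitehead product the realization above is essentially self-contained, but when the relevant part of $d$ has order $\ge3$ one must verify that the corresponding higher Whitehead product is genuinely multilinear in the needed sense---which presupposes vanishing of the lower products and a careful choice of representative inside the attendant coset---and that its fat-wedge model can be built with $O(1)$ complexity, so that precomposing with the scaled sphere maps costs only the factor $O(k^{1/(n+j-1)})$. The remaining issues are routine: passing from the rational identity for $\alpha$ to an integral statement (clearing denominators realizes a bounded multiple $N\alpha$ exactly, and one writes $k\alpha=\lfloor k/N\rfloor(N\alpha)+(k\bmod N)\alpha$, absorbing the bounded second term), and the integer rounding of the exponents, which perturbs $\prod_i d_i$ by a bounded factor. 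Finally I would note that the main shadowing theorem is not the engine here: on a round sphere the $n$-form witnessing the class $k\alpha$ must have sup-norm $\gtrsim k$, so a direct appeal to it recovers only the trivial $O(k^{1/n})$, and the improvement genuinely comes from the Whitehead-product factorization.
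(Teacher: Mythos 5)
The statement you have ``proved'' is labeled a \emph{conjecture} in the paper, and the paper is explicit that it remains open in general: it ``remains an impressionistic, ill-defined one'' (for the strong version), and the discussion in \S\ref{S:D\&G} poses as an open problem whether Conjecture~\ref{conj:dist} can be proven by ``applying self-maps and similar geometric methods to inductively built models.'' The paper proves this statement only for spaces with the rational homotopy type of symmetric spaces (Theorem~\ref{thm:sym}), together with a separate ad hoc treatment of one additional example, using a scaling automorphism plus the shadowing principle rather than Whitehead products.

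The concrete gap in your argument is the claim that every $\alpha$ in the kernel of the rational Hurewicz map ``lies, rationally, in the span of [generalized Whitehead] products'': dualizing a nonclosed generator $v$ of the minimal model does \emph{not} in general produce a higher Whitehead product. The $r$th-order Whitehead product $[\alpha_1,\ldots,\alpha_r]$ is only defined when the lower-order products among the $\alpha_i$ vanish, and the model differential $dv$ typically involves generators whose mutual brackets are nonzero. The paper's space $\mathbf{NF}$, with minimal model
$$\langle x_1^{(3)},x_2^{(3)},y^{(5)},T^{(10)},\ldots \mid dx_i=0,\, dy=x_1x_2,\, dT=x_1x_2y,\ldots\rangle,$$
is exactly such a counterexample: $T$ is not dual to any generalized Whitehead product ($[x_1,x_2]\neq 0$ rationally, so the triple bracket is undefined), yet $T$ is in the kernel of the dual Hurewicz map. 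The unpublished result of Weinberger quoted in the introduction says only that the \emph{lowest}-dimensional nonzero kernel element is always a generalized Whitehead product --- the induction you need to run across all degrees does not go through. For $\mathbf{NF}$ the paper instead constructs $L^{11}\tau$ by concatenating two inequivalent nullhomotopies of a Whitehead product and repeatedly rescaling; this is not a fat-wedge model, and whether such constructions can always be found is precisely what the paper leaves open.

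Your first half --- nonzero rational Hurewicz image implies undistorted via integrating a bounded closed form --- is correct and matches the standard argument (and the paper's Theorem in \S\ref{S:pi_k}). Your closing remark that the shadowing principle gives only the trivial $O(k^{1/n})$ bound on a round sphere is also accurate and aligned with the paper's comments on why these conjectures resist its main technique.
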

\noindent There is a ``strong'' but again impressionistic version of the
conjecture which states that the bound implied by \cite{GrHED} is sharp, and
which is equivalent to Conjecture \ref{conj:growth}.

One may try to formulate similar conjectures more generally for the set of
mapping classes $[X,Y]$ where $X$ is not necessarily a sphere.  In \cite{GrMS},
Gromov suggests that the growth of $[X,Y]$ should be asymptotic to $L^\alpha$ for
some integer $\alpha$ determined by the minimal models of $X$ and $Y$.  This is
disproved in the companion paper \cite{IRMC} by the author and Weinberger;
however, we do show there that the growth of $[X,Y]$ is at least bounded above by
a polynomial.

Since the integral homotopy classes can be thought of in general as the integer
points of an algebraic variety\footnote{This variety is cut out of the space of
  graded algebra maps between free DGAs by equations forcing it to be a chain
  map.}, one cannot say much in the way of lower bounds on growth.  Perhaps
results can be obtained when this variety has particularly nice properties, or in
instances where there is more structure, for example for $\Aut(Y)$, which
Sullivan \cite{SulLong} demonstrated is an arithmetic subgroup of an algebraic
group of rational automorphisms.

One could formulate a weaker conjecture, somewhat analogous to Conjecture
\ref{conj:dist}.  While the notion of distortion only makes sense when the set of
mapping classes $[X,Y]$ is a group, Gromov sketches an argument in \cite{GrQHT}
that the Lipschitz constant of a map gives an upper bound on its
obstruction-theoretic rational homotopy invariants.  One can then guess that
every class which can be realized via small enough such invariants has a
$CL$-Lipschitz representative.  This guess also turns out to be false in general,
as will be explained in a forthcoming paper.

Nonetheless, a relative analogue can be stated in this more general setting.
\begin{conj}[{\cite{GrQHT}}] \label{conj:htpy}
  Let $f \simeq g:X \to Y$ be $L$-Lipschitz maps from a finite complex to a
  finite simply connected complex.  Then there is a polynomially bounded function
  $P_{X,Y}(L)=O(L^{p(X,Y)})$ such that there is a homotopy between $f$ and $g$
  through $P_{X,Y}(L)$-Lipschitz maps.
\end{conj}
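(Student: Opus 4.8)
The plan is to deduce Conjecture~\ref{conj:htpy} from the main theorem of the paper --- which converts a ``bounded'' map of differential algebras, and a bounded homotopy of such maps, into a Lipschitz map and a Lipschitz homotopy of the underlying spaces --- together with the bounded Poincar\'e lemma established in Section~2. Fix once and for all a minimal Sullivan model $\mathcal{M}_Y=(\Lambda V,d)$ of $Y$ and a quasi-isomorphism $\mathcal{M}_Y\to\Omega^\bullet(Y)$; since $Y$ is a finite simply connected complex, $V$ is finite-dimensional through each degree, and we may represent a basis of $V$ through the range of degrees relevant to $X$ by finitely many forms of norm at most some constant $C_Y$.

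First I would translate the two maps into algebra. An $L$-Lipschitz map $h\colon X\to Y$ pulls back a $k$-form of sup-norm $N$ to a form of sup-norm $O(L^k N)$, so composing $h^\ast$ with the fixed model map produces a map of differential graded algebras $h^\sharp\colon\mathcal{M}_Y\to\Omega^\bullet(X)$ with $\lVert h^\sharp(x)\rVert = O_Y\!\bigl(L^{\deg x}\bigr)$ on every generator --- in the terminology of the main theorem, $h^\sharp$ is polynomially $L$-bounded. Applying this to $f$ and $g$ gives $f^\sharp$ and $g^\sharp$; composing instead with any (a~priori uncontrolled) homotopy $H\colon X\times I\to Y$ from $f$ to $g$ shows that $f^\sharp$ and $g^\sharp$ are homotopic as DGA maps. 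What remains, and what carries all the content, is to replace this unquantified homotopy by a \emph{polynomially bounded} one.

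I would build such a homotopy $\Phi\colon\mathcal{M}_Y\to\Omega^\bullet(X\times I)$ by induction over a basis of $V$ ordered by degree, applying the main theorem only at the very end. At the stage of a generator $x$ of degree $k$, with $\Phi$ already defined on the lower generators and restricting to $f^\sharp$, $g^\sharp$ at the two ends, one must choose $\Phi(x)$ with $d\Phi(x)=\Phi(dx)$ and with the prescribed restrictions at the ends; the obstruction is a class in $H^{k}(X;\mathbb{R})$, which vanishes since $f\simeq g$ (provided the partial homotopy has been chosen in the correct homotopy class rel ends --- see below). To keep norms controlled, write $\Phi(x)=(1-t)\,f^\sharp(x)+t\,g^\sharp(x)+\zeta$, so that $\zeta$ must be a relative primitive of a bounded closed form vanishing on $X\times\partial I$; the relative form of the Section~2 estimate provides one with
\[
\lVert\zeta\rVert\ \le\ C_X\bigl(\lVert\Phi(dx)\rVert+\lVert f^\sharp(x)\rVert+\lVert g^\sharp(x)\rVert+\lVert f^\sharp(dx)\rVert+\lVert g^\sharp(dx)\rVert\bigr).
\]
Since $dx$ is a fixed polynomial of bounded length and degree in lower generators, $\lVert\Phi(dx)\rVert$ is bounded by a product of norms from earlier stages; as there are finitely many stages and all relations have bounded degree, the exponents remain bounded, and one gets $\lVert\Phi(x)\rVert\le C_{X,Y}(L+1)^{p(X,Y)}$ throughout. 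Feeding the resulting polynomially bounded $\Phi$ into the main theorem yields a homotopy $X\times I\to Y$ of Lipschitz constant $C_{X,Y}(L+1)^{p(X,Y)}$ whose ends shadow $f^\sharp$ and $g^\sharp$; since $f$ and $g$ themselves shadow $f^\sharp$, $g^\sharp$, the same theorem connects the two ends to $f$ and $g$ by $O(L)$-Lipschitz homotopies, and concatenation completes the argument.

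The hard part will be this inductive construction: one must simultaneously keep the multiplicative blow-up of norms polynomial, arrange that the bounded Poincar\'e lemma is available in exactly the relative, $t$-parametrized form needed on $X\times I$, and --- most delicately --- make sure that the bounded partial homotopy built so far lies in the right component of the space of homotopies rel endpoints, so that the obstruction to extending it actually vanishes; this last point is where $\pi_1$ of the relevant mapping space intervenes and where the metric and triangulation hypotheses on $X$ must be used. Finally, when $g$ is constant there is a shortcut that beats the generic exponent: a nullhomotopic $L$-Lipschitz map extends over the cone $CX$, and by compressing all of the topological complexity of the extension into a collar of width $\sim 1/L$ around the cone point --- so that only a single extra factor of $L$ is incurred, rather than one per stage of the tower --- one obtains a homotopy of Lipschitz constant $O(L^2)$, giving $p=2$.
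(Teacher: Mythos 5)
Your high-level framework matches the paper's: algebraicize $f$ and $g$ via pullback against the minimal model, build a polynomially bounded DGA homotopy by induction over generators using the bounded Poincar\'e lemma, and then invoke the shadowing principle (Theorem \ref{thm:main}, via Theorem \ref{tech:htpy}) to convert back to a geometric homotopy.  You have also correctly identified where the weight of the argument lies.  However, the proposal stops at exactly the point where the paper's proof has to do real work, and the $p=2$ shortcut you offer for nullhomotopies is not a proof.

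The gap: you write that the ``most delicate'' point is to ensure the partial homotopy ``lies in the right component'' so the obstruction in $H^k(X;\mathbb{R})$ vanishes, and you flag it as ``where $\pi_1$ of the relevant mapping space intervenes''---but you do not say how to kill the obstruction while retaining polynomial bounds on $\zeta$.  This is precisely the content of steps (2)--(3) in the paper's proof of Theorem \ref{thm:htpy}.  The paper does \emph{not} try to choose the lower-degree data so that the obstruction vanishes directly; instead, at each stage it concatenates the partial homotopy with a \emph{self-homotopy} of $f^*m_Y$ of the form $f^*m_Y + \eta \otimes e$ chosen to cancel the obstruction class.  The bound on the size of this $\eta$ is what requires the integral structure: the obstruction map
$$\mathcal{O}: [\mathcal{M}_Y^*(k),\Omega^*X \otimes \mathbb{R}\langle e \rangle]_{f^*m_Y} \to H^{k+1}(X;V_{k+1})$$
is a linear map whose domain contains (after tensoring with $\mathbb{R}$) the lattice $\pi_1\bigl((Y_{(k)})^X,f\bigr)$.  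By \cite[\S5]{IRMC}, the lattice map into the algebraic group is $P_k(\Lip f)$-surjective, which produces a polynomially bounded lattice basis $\mathbf{b}$, and the image $\mathcal{O}(\mathbf{b})$ is contained in the image of the integral $H^{k+1}(X;\pi_{k+1}(Y))$ and therefore spans a parallelotope of volume bounded \emph{below} independently of $f$.  This is what yields a polynomial bound on the size of a preimage of the obstruction class, and it is also why the explicit exponent $p(X,Y)$ is lost.  Without this (or some substitute), your induction has no way to keep $\zeta$ bounded: a coboundary-free choice at stage $k-1$ may leave a nonzero obstruction at stage $k$, and an arbitrary ``correction'' making it vanish can have size growing unboundedly in terms of the data---as the paper's own example of maps $S^4 \times S^3 \to S^4$ (homomorphisms with $\epsi \to 0$) illustrates.

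The $p=2$ nullhomotopy case is also handled differently, and your cone argument does not establish the bound.  ``Compressing all of the topological complexity of the extension into a collar of width $\sim 1/L$ around the cone point'' is not a construction: the Lipschitz constant of an arbitrary extension over $CX$ is a priori unbounded, and composing it with a $1/L$-rescaling of the cone coordinate does not give $O(L^2)$ without already knowing something about the extension.  The paper instead shows that for nullhomotopies the algebraic induction encounters \emph{no} obstruction at all: representatives of $[\mathcal{M}_Y^*(k),\Omega^*X \otimes \langle e \rangle]_0$ have the form $0 + \eta \otimes e$ with $\eta$ a derivation relative to the zero map, so $\eta$ vanishes on decomposables, hence $\eta|_{dV_{k+1}} = 0$ and the obstruction map is identically zero.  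Then \eqref{eqn:better} gives $\Dil_{L^{-2}}(\Phi) \leq L^2$, and Theorem \ref{tech:htpy} with $\alpha=0$ gives thickness $CL^2$ and constant length.  This is a structural algebraic fact, not a cone-rescaling trick, and it is what gives the explicit exponent.
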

\noindent Gromov remarked that he could not think of an example where this
polynomial had to be nonlinear.\footnote{On the other hand, if we allow
  $\pi_1(Y)$ to be nontrivial and take $X=S^1$, this corresponds to the so-called
  isodiametric function of $\pi_1(Y)$, which for certain groups grows faster than
  any computable function \cite{Ger}.}

In the past 20 years, there has been some incremental progress on these
conjectures.  An unpublished result of Shmuel Weinberger (which appears in the
author's PhD thesis \cite{thesis}) shows the following weak version of the
distortion conjecture: there are no rationally nontrivial distorted elements in
$\pi_*(Y)$ if and only if the Hurewicz map
$$\pi_*(Y) \otimes \mathbb{Q} \to H_*(Y;\mathbb{Q})$$
is injective.  The proof uses the fact that distortion is well-understood for
generalized Whitehead products.  Conjecture \ref{conj:htpy} is proven for target
spaces $Y$ whose rational homotopy structure is relatively simple (including
spheres, H-spaces, and homogeneous spaces of Lie groups) in the series of papers
\cite{FWPNAS}, \cite{CDMW}, and \cite{CMW}.

In this paper, we prove results about Lipschitz homotopies which generalize those
of \cite{CDMW} and \cite{CMW} and are actually somewhat stronger than Conjecture
\ref{conj:htpy}.  Define the \emph{length} of a homotopy (sometimes also referred
to as \emph{width}) to be the maximal Lipschitz constant of its restrictions to
$\{x\} \times [0,1]$, and its \emph{thickness} to be the maximal Lipschitz
constant of its restrictions to $X \times \{t\}$.  Gromov's conjecture only asks
about thickness; here is a summary of the results of \S\ref{S:liphom}:
\begin{thmA} \label{summary:htpy}
  Let $Y$ be a finite simply connected complex and $X$ a finite complex of
  dimension $n$.
  \begin{enumerate}[label={(\roman*)}]
  \item There are constants $C(X,Y)$ and $p(X,Y)$ such that any homotopic
    $L$-Lipschitz maps $f \simeq g:X \to Y$ are homotopic via a homotopy of
    length $C$ and thickness $C(L+1)^p$.
  \item Moreover, any nullhomotopic $L$-Lipschitz map is nullhomotopic via a
    homotopy of length $C$ and thickness $C(L+1)^2$.
  \item If in addition $Y$ has positive weights (an algebraic condition on the
    rational homotopy structure), then any nullhomotopic $L$-Lipschitz map is
    nullhomotopic via a homotopy of linear thickness and length $C(L+1)^{n-1}$.
  \end{enumerate}
\end{thmA}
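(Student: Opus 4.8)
The plan is to trade the geometry for the algebra of Sullivan's differential forms, carry out the homotopy algebraically, and then push the algebraic homotopy back to a geometric one by the shadowing theorem, which is the main technical result of the paper. Throughout, ``size'' of a DGA map $\varphi\colon\mathcal M_Y\to\Omega^*(X)$ (resp.\ $A_{\PL}(X)$) means the largest $L^\infty$-norm of the image of a generator, and I will call $\varphi$ \emph{polynomially bounded} when this is $\lesssim(L+1)^n$.

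First I would install the dictionary. Because $Y$ is finite and simply connected, Sullivan's theory identifies homotopy classes of maps $X\to Y_{\mathbb Q}$ with homotopy classes of DGA maps from the minimal model $\mathcal M_Y$ to the $\PL$--de Rham algebra $A_{\PL}(X)$, and the passage from $Y_{\mathbb Q}$ back to $Y$ costs only a bounded (torsion) amount, absorbed into the constant $C(X,Y)$. An $L$-Lipschitz map $f\colon X\to Y$ pulls back a $k$-form multiplying its $L^\infty$-norm by at most $L^k$; using the quantitative Poincar\'e-type estimates of Section~2 to make the obstruction-theoretic choices, one represents $f$ by a polynomially bounded DGA map $\varphi_f\colon\mathcal M_Y\to\Omega^*(X)$, each degree-$k$ generator landing in a form of norm $\lesssim L^k\le(L+1)^n$. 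All of the Lipschitz estimates below enter only through this single inequality.

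For (i): since $f\simeq g$, the maps $\varphi_f,\varphi_g$ are homotopic as DGA maps, i.e.\ there is a DGA map $\mathcal M_Y\to\Omega^*(X\times[0,1])$ restricting to them. The key algebraic lemma is that this homotopy can be chosen polynomially bounded, and with its $dt$-component bounded \emph{independently} of $L$: one builds it by induction over the generators of $\mathcal M_Y$, where at each stage the obstruction to extending is a closed form that must be written as a coboundary, which Section~2 does with only a bounded loss of norm, and the number of steps and the shape of the estimates depend only on $\mathcal M_Y$. Feeding this into the shadowing theorem produces a genuine homotopy $X\times[0,1]\to Y$; the theorem converts the norm bound on the $X$-directions into the thickness bound $C(L+1)^p$ and the bound on the $dt$-component into the length bound $C$. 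Part (ii) is the same argument with $\varphi_g$ the augmentation; the improvement to a \emph{quadratic} thickness bound comes from choosing the algebraic nullhomotopy more economically than the generic inductive one --- because one is contracting to a point rather than joining two nontrivial maps, the full polynomial cost can be confined to undoing the lowest, degree-$2$ obstruction (as in the Hopf-invariant computation recalled above), the higher obstructions being handled after a preliminary cheapening of the map; pinning the exponent to exactly $2$ here is one of the delicate points. Part (iii) uses positive weights to produce a one-parameter family of DGA endomorphisms $\psi_t$ of $\mathcal M_Y$, $t\in[0,1]$, scaling a weight-$w$ generator by $t^w$ with $\psi_0$ the augmentation; the composites $\varphi_f\circ\psi_t$ give an explicit algebraic nullhomotopy along which, after reparametrizing time, every generator is carried by a form of \emph{linear} norm in $L$. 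Shadowing this with linear thickness forces the homotopy to be spread out in time --- roughly, spreading the $\sim L^n$ worth of algebraic change over time $\sim L^{n-1}$ --- which yields the length bound $C(L+1)^{n-1}$.

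The main obstacle is the shadowing theorem itself: converting an algebraic DGA map, or a homotopy of such, of controlled size into a geometric map, or homotopy, of controlled Lipschitz constant, in such a way that the induced algebraic data is boundedly homotopic to the original. This is a quantitative refinement of the statement that $A_{\PL}(X)$ computes the rational homotopy type of $X$: one must actually integrate forms back into maps, building the map skeleton by skeleton, controlling the geometry of each extension, reconciling the $\PL$ and de Rham models, and absorbing into $C(X,Y)$ both the torsion in $\pi_*(Y)$ and the gap between $Y$ and $Y_{\mathbb Q}$. Once that theorem and the Section~2 estimates are in hand, the remainder --- the Lipschitz-to-norm inequality, the inductive construction of bounded algebraic homotopies, and the weight-scaling trick --- is essentially bookkeeping.
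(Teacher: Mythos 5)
The overall strategy --- Lipschitz gives polynomial bounds on pulled-back forms, construct a controlled algebraic homotopy by induction on generators, then shadow it --- is indeed the paper's strategy. But your proposal has a genuine gap in part (i), and it misdescribes the mechanism behind the exponents in (ii) and (iii).

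For (i), you write that one ``builds [the algebraic homotopy] by induction over the generators of $\mathcal M_Y$, where at each stage the obstruction to extending is a closed form that must be written as a coboundary, which Section~2 does with only a bounded loss of norm.'' This would only be true if the obstruction \emph{class} in $H^{k+1}(X;V_{k+1})$ vanished at every stage, so that the isoperimetric lemma could be applied. For a general pair $f\simeq g$ it does not vanish; the obstruction detects the difference between the two maps at each Postnikov level. The paper's own remark (e) after Theorem~\ref{thm:htpy} gives an explicit counterexample for $S^4\times S^3\to S^4$: two homomorphisms of bounded size whose every algebraic homotopy is unbounded. So it is simply false that a polynomially bounded algebraic homotopy exists in general. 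What makes the theorem true is that $f$ and $g$ are honest maps, so the obstruction class lies in the image of an integral lattice $\pi_1\bigl((Y_{(k)})^X,f\bigr)\to H^{k+1}(X;V_{k+1})$; one kills it by concatenating with a self-homotopy of $f^*m_Y$ chosen from that lattice, and the polynomial bound on the size of that correction comes from the $P_k(\Lip f)$-surjectivity established in \cite{IRMC}, not from any DGA-internal argument. This is the step where the explicit degree of the polynomial $p$ is lost, and your proposal omits it entirely. Relatedly, you also elide the need (formalized in Theorem~\ref{tech:htpy}) to concatenate the algebraic homotopy with a small algebraic self-homotopy so that the result is in the relative homotopy class of a \emph{genuine} homotopy before the shadowing principle can even be invoked; the shadowing principle requires membership in such a class as a hypothesis.

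For (ii), the quadratic exponent does not come from ``confining the full polynomial cost to undoing the lowest, degree-$2$ obstruction.'' In the nullhomotopic case the obstructions all vanish (the paper shows the relevant map $[\mathcal M_Y^*(k+1),\Omega^*X\otimes\langle e\rangle]_0\to[\mathcal M_Y^*(k),\Omega^*X\otimes\langle e\rangle]_0$ is surjective precisely when the base map is $0$), and the estimate \eqref{eqn:better} shows that the coefficient of $t^i(dt)^j$ on a degree-$k$ generator has $L^\infty$-norm $\lesssim L^{2k-2}$: the exponent $2k-2$ arises purely from multiplying lower-degree terms, whose exponents sum to at most $2(k+1)-2\ell\leq 2k-2$ with $\ell\geq 2$ factors. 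The quadratic bound $\Dil_{L^{-2}}\Phi\lesssim L^2$ is distributed across all degrees, not concentrated in degree $2$. For (iii), your scaling-automorphism idea matches the paper, but the bookkeeping is different: one defines $\Phi(v)=f^*m_Yv\otimes t^i+c(v)\otimes it^{i-1}dt$ with $c(v)$ antidifferentiating $(-1)^{k+1}f^*m_Yv+c(dv)$, and the norm $\lVert c(v)\rVert\lesssim L^{k+j-1}$ grows with the filtration level $j\leq d\leq n-1$ of $v$, giving $\Dil_{CL^{d-1}}\Phi\leq L$ and hence length $\sim L^d$ and linear thickness via Theorem~\ref{tech:htpy} with $\alpha=1$. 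Your phrase ``spreading $\sim L^n$ worth of algebraic change over time $\sim L^{n-1}$'' captures the flavor but not the actual accounting.
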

The latter two bounds are sharp: there are spaces for which one parameter cannot
be decreased without increasing the other.  On the other hand, it's not clear
whether linear thickness is achievable for some classes of maps not satisfying
(iii).

The growth and distortion conjectures are more resistant for reasons which are
explained later in the introduction, but we do prove a set of results for
symmetric spaces:
\begin{thmA} \label{summary:sym}
  Let $Y$ be a simply connected finite complex which has the rational homotopy
  type of a Riemannian symmetric space.  Write
  $\eta_k:\pi_k(Y) \to H_k(Y;\mathbb{Q})$ for the Hurewicz homomorphism.
  \begin{enumerate}[label={(\roman*)}]
  \item The distortion of an element $\alpha \in \pi_n(Y)$ is $\Theta(k^{1/n})$ if
    $\eta_k(\alpha) \neq 0$ and $\Theta(k^{1/(n+1)})$ otherwise.  (This proves the
    ``strong'' distortion conjecture for such spaces.)
  \item The size of the $L$-ball in $\pi_n(Y)$ is
    $\Theta(L^{n\rk\img\eta_k+(n+1)\rk\ker\eta_k})$.
  \item Nullhomotopic $L$-Lipschitz maps $X \to Y$, for any finite complex $X$,
    have nullhomotopies whose Lipschitz constant is slightly superlinear in $L$.
  \end{enumerate}
\end{thmA}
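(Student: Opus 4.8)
The plan is to exploit the special structure of symmetric spaces as follows. The structural input is that a finite complex $Y$ with the rational homotopy type of a compact symmetric space is formal and has a \emph{pure} minimal model $(\Lambda(V_{\mathrm{even}}\oplus V_{\mathrm{odd}}),d)$, with $d$ vanishing on $V_{\mathrm{even}}$ and sending $V_{\mathrm{odd}}$ into $\Lambda^{\ge2}V_{\mathrm{even}}$; in particular $Y$ has positive weights, so Theorem~\ref{summary:htpy}(iii) and the shadowing theorem both apply to it. The key consequence of purity is that the ``story'' of every generator terminates after one step: the only invariants of a mapping class into $Y$ are primary (the Hurewicz image) and secondary (functional cohomology operations, each computed by a single antiderivative). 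Rationally, $\pi_n(Y)\cong(V^n)^*$ splits as $(V^n\cap\ker d)^*\oplus W^*$, the first factor being $\img\eta_n$ and $W$ the span of the degree-$n$ generators $y$ with $dy\ne0$, so that $W^*\cong\ker\eta_n\otimes\mathbb{Q}$.

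For the lower bounds in (i): if $\eta_n(\alpha)\ne0$, pick a closed $n$-form $\omega$ on $Y$ with $\langle\eta_n(\alpha),[\omega]\rangle\ne0$; any $L$-Lipschitz $f$ with $[f]=k\alpha$ then satisfies $|k|\cdot|\langle\eta_n(\alpha),[\omega]\rangle|=\bigl|\int_{S^n}f^*\omega\bigr|\le C\lVert\omega\rVert_\infty L^n$, so $\delta_\alpha(k)=\Omega(k^{1/n})$, matching the universal bound $\delta_\alpha(k)=O(k^{1/n})$ recalled in the introduction. If instead $\eta_n(\alpha)=0$, then $\alpha$ is dual to a generator $y\in W$ and $k\alpha$ is detected by a secondary operation; writing it as a sum of integrals over $S^n$ of wedges of pulled-back forms and their antiderivatives --- exactly as for the Hopf invariant in the introduction --- the $L^\infty$-bounds of Section~2 show its value on an $L$-Lipschitz map is $O(L^{n+1})$ (the forms in $dy$ have total degree $n+1$, and antidifferentiation costs no power of $L$), while purity makes this value linear in $k$, whence $\delta_\alpha(k)=\Omega(k^{1/(n+1)})$.

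The matching upper bounds come from building algebraic maps efficiently and invoking the shadowing theorem. Given a prescribed tuple of primary invariants of size $O(L^n)$ and secondary invariants of size $O(L^{n+1})$, I would choose a DGA map from the minimal model of $Y$ into the forms on $S^n$ realizing that tuple, with each generator scaled compatibly with the weight grading so that the resulting map has dilatation $O(L)$; the shadowing theorem then produces an $O(L)$-Lipschitz representative. Taking $L\sim k^{1/(n+1)}$ gives $\delta_\alpha(k)=O(k^{1/(n+1)})$ for distorted $\alpha$, completing (i). Letting the tuple range over all $\Theta(L^{n\rk\img\eta_n})$ admissible primary values and all $\Theta(L^{(n+1)\rk\ker\eta_n})$ secondary ones, and noting that distinct tuples give distinct classes, yields the lower bound in (ii); the upper bound in (ii) is the same volume and antiderivative estimates applied coordinatewise, together with the absence of higher operations, so that an $L$-Lipschitz map has at most $O(L^n)$ choices for each primary coordinate and $O(L^{n+1})$ for each secondary one, and up to a bounded ambiguity (torsion, and the polynomially bounded indeterminacy of the secondary operations) these determine the class. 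For (iii), Theorem~\ref{summary:htpy}(iii) already supplies a nullhomotopy of linear thickness and length $O(L^{n-1})$; I would subdivide it into $O(\log L)$ stages, on each of which the map shrinks by a fixed factor, rescale the algebraic data on each stage, and splice the stages into a single nullhomotopy whose Lipschitz constant is slightly superlinear in every direction.

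The easy half of all this is the collection of lower bounds --- on distortion and on the number of classes --- which follows formally from the $L^\infty$-bounds on forms and their antiderivatives in Section~2. The main obstacle is the upper-bound side: part (i) for distorted $\alpha$, the lower bound of (ii), and all of (iii) require converting the weight-compatible rescaling of algebraic maps into \emph{quantitative} control on the output of the shadowing theorem --- tracking how its constants depend on the rescaling --- and, since $Y$ is only assumed rationally equivalent to a symmetric space, verifying that the primary-plus-secondary invariants are genuinely complete and that their mutual indeterminacy is polynomially bounded.
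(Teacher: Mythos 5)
Your overall roadmap --- pure minimal model, weight-compatible scaling automorphism, and the shadowing theorem to promote an efficient algebraic map to a geometric one --- matches the paper's strategy in outline. But there are three specific places where your blueprint would not close, and they are the points at which the paper's argument does its real work.

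\textbf{Geometric formality versus formality.} You invoke ``formal and pure,'' but the paper's proof of part~(i) hinges on the stronger fact that symmetric spaces are \emph{geometrically formal}: the inclusion of harmonic forms is an algebra homomorphism $H^*(Y;\mathbb{R}) \to \Omega^*(Y)$. This produces a minimal model $m_Y$ with $m_Y(W_1)=0$, where $W_0=\ker d$ and $dW_1 \subset \bigwedge W_0$. Without that, the scaling trick fails. The paper even flags this in a footnote: formal spaces such as $\bigl[(S^3\times S^3)^{\#2}\times S^3\bigr]^\circ$ and the non-formal $\mathbf{NF}$ do not satisfy the required vanishing, and the argument ``definitively fails'' there. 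Formality alone (and hence positive weights alone) is not the right structural input.

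\textbf{The upper-bound mechanism.} You propose to ``choose a DGA map from the minimal model of $Y$ into the forms on $S^n$ realizing [a tuple with secondary invariant of size $L^{n+1}$], with each generator scaled compatibly with the weight grading so that the resulting map has dilatation $O(L)$.'' This cannot be done at the level of generator images: if a degree-$n$ generator is sent to a form of operator norm $L^{n+1}$, the dilatation is already $L^{(n+1)/n}$, not $O(L)$. The paper's point is subtler: one does not realize the secondary invariant in $\ph_L$ itself. Rather, one starts with an arbitrary representative $f$ of $\alpha \in \ker\eta$, takes the automorphism $\rho_L$ scaling $W_0$ by $L^{\deg}$ and $W_1$ by $L^{\deg+1}$, and sets $\ph_L=f^*m_Y\circ\rho_L$. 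Because $m_Y(W_1)=0$, the $W_1$-scaling has \emph{no effect} on the images of generators, so $\Dil(\ph_L)\le CL$; but conjugating the homotopy $\Phi$ (from $f^*m_Y$ to the ``canonical'' representative of $\alpha^{**}$) by $\rho_L$ shows that $\ph_L$ lies in the homotopy class of $L^{n+1}\alpha^{**}$, because the canonical representative \emph{does} live in $W_1\cap V_n$ and gets scaled by $L^{n+1}$. The secondary invariant appears only in the homotopy class, not in the values of the homomorphism, and this is what makes the dilatation bound compatible with the $L^{n+1}$ gain. Your proposal, as stated, is trying to do something impossible.

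\textbf{Part (iii).} Subdividing into $O(\log L)$ stages, each shrinking by a \emph{fixed} factor, does not give a bound of the form $L^{1+o(1)}$: every invocation of the shadowing principle costs a multiplicative constant $C'$, so constant shrink factors yield a recurrence whose solution is $L^{1+c}$ for some fixed $c>0$. The paper (Theorem \ref{thm:weird}) instead shrinks by a factor $\rho(L)=\exp\bigl(\kappa\sqrt{\log L}\bigr)$ that \emph{grows} with $L$; the recurrence $\gamma(L)\lesssim\max\{L\rho(L),\rho(L)\gamma(L/\rho(L))\}$ is then balanced precisely when $\kappa$ is chosen so that the total accumulated overhead over the $\Theta(\sqrt{\log L})$ rounds equals $\exp(\kappa\sqrt{\log L})$. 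The slightly superlinear form of the answer comes from this optimization, not from a crude stage count. Moreover, the whole scheme again uses geometric formality: the reason concatenating the shadowed map $G$'s pullback costs only a linear factor when rescaled is that $G^*m_{S^n}$ kills the degree-$(2n-1)$ generator $b$, so only the degree-$n$ part scales.

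Finally, a smaller issue: for elements with $\eta_n(\alpha)=0$, the claimed lower bound $\delta_\alpha(k)=\Omega(k^{1/(n+1)})$ is not ``formal from the $L^\infty$-bounds'' as you suggest; in the paper it rests on the pure structure making the obstruction-theoretic invariant (computed by the algorithm of \S\ref{S:pi_k}) bounded by $L^{n+1}$, and on the discussion around \eqref{eqn:better}. This is a real part of the argument, not a triviality.
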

I believe that the sharp bound on sizes of nullhomotopies in this case is linear,
but (iii) is an improvement over Theorem \ref{summary:htpy} which only gives a
quadratic bound.

\subsection{Minimal models and DGA maps}

To state more precisely the technical ideas in this paper, we must delve into
Sullivan's model of rational homotopy theory.  This is discussed in greater
detail in \S3 and we also refer the reader to \cite{SulLong} and \cite{GrMo} for
detailed exposition.  More accurately, what we give here is \emph{real} homotopy
theory; the results are less impressive than those of rational homotopy theory in
some respects that are irrelevant to the ideas in this paper, but this theory has
the advantage of working with off-the-shelf differential forms which behave
nicely with respect to smooth maps.

For our purposes, the main points of Sullivan's theory are that the algebra of
smooth differential forms $\Omega^*Y$ on a compact manifold $Y$ with boundary is
a fairly good homotopy-theoretic model for the space $Y$ itself; and that it in
turn is modeled by a much smaller, easily described algebra closely related to
the Postnikov tower of $Y$.

More precisely, we think of these as differential graded algebras (DGAs), that
is, chain complexes (in this case over $\mathbb{R}$) equipped with a
multiplication which satisfies the graded Leibniz rule.  If $Y$ is simply
connected, then there is a homotopy equivalence (under a well-known notion of
homotopy of DGAs which we define in \S3) $m_Y:\mathcal{M}_Y^* \to \Omega^*Y$
where $\mathcal{M}_Y^*$ is a DGA of finite type (i.e.~generated by a finite
vector space in every degree.)  This \emph{minimal model} has a number of nice
properties, but all that matters for us is that given a map $f:X \to Y$ from some
manifold $X$, we can describe the homomorphism
$f^*m_Y:\mathcal{M}^*_Y \to \Omega^*X$ using a finite number of
differential-form-valued invariants.  Indeed, up to homotopy, this description
can be made finitary in a much stronger sense.

Write $[\mathcal{M}^*_Y,\Omega^*X]$ for the set of homotopy classes of DGA
homomorphisms.  Then $f \mapsto f^*m_Y$ induces a well-defined map
$[X,Y] \to [\mathcal{M}^*_Y,\Omega^*X]$ which is finite-to-one by
\cite[Thm.~10.2(i)]{SulLong}.  Moreover, in various cases where these sets have a
group structure, this map is actually the homomorphism ${} \otimes \mathbb{R}$.

\subsection{Existence of shadows}

In this paper, we study the algebraicization map $f \mapsto f^*m_Y$ more closely,
as a continuous map
$$\mathbf{Alg}:\Map(X,Y) \to \Hom(\mathcal{M}_Y^*,\Omega^*X),$$
where the latter object is equipped with a metric induced by its homotopy theory.
We can think of homomorphisms $\mathcal{M}_Y^* \to \Omega^*X$ as ``platonic
forms'' of maps.  These include, of course, the pullbacks of genuine maps
$X \to Y$, just as a committed Platonist would have to admit that the world of
concepts includes the concept of any particular object in the real world, as well
as abstractions at various levels.  But most platonic forms are indeed abstract.
Moreover, $\mathbf{Alg}$ is far from being a homotopy equivalence, even on
connected components, since many algebraic homotopies have non-integer and even
irrational invariants.

Nevertheless, the main technical theorem of this paper is that we can produce
``almost inverse images'' under $\mathbf{Alg}$.  Suppose $Y$ is compact and $X$
has bounded geometry.  If $\ph:\mathcal{M}_Y^* \to \Omega^*X$ is in the connected
component of a genuine map, then it has a shadow $f:X \to Y$ in the Plato's cave
of genuine maps such that $f^*m_Y$ is reasonably close to $\ph$, as measured by
the size of an (algebraic) homotopy between them.  Moreover, the Lipschitz
constant of $f$ is closely related to a natural geometric functional on $\ph$
which we call the \emph{formal dilatation}.  Most of our applications actually
use the relative form of this statement:
\begin{thm*}[Shadowing principle, informal version]
  Let $A \subset X$ be a subcomplex and $u:A \to Y$ an $L$-Lipschitz map.  Then
  any extension $\ph:\mathcal{M}_Y^* \to \Omega^*X$ over $X$ of $u^*m_Y$ which is
  in the relative homotopy class of a genuine extension $\tilde u:X \to Y$ of $u$
  has a nearby shadow $f:X \to Y$ which is in the same relative homotopy class
  and has Lipschitz constant at most $CM+C$, where $M$ is the formal dilatation
  of $\ph$.
\end{thm*}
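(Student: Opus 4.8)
The plan is to construct the shadow $f$ by climbing the Postnikov tower of $Y$, realizing one rational homotopy generator at a time and absorbing every estimate into the finitely many stages. Three reductions first. Since $X$ is a finite complex of dimension $n:=\dim X$ and $Y$ is simply connected, maps $X\to Y$ rel $A$ correspond bijectively to maps $X\to Y_n$ rel $A$, where $Y_k$ is the $k$-th Postnikov stage; the minimal model $\mathcal M^*_{Y_n}=(\Lambda(V_2\oplus\cdots\oplus V_n),d)$ is finitely generated, and the tower $Y_n\to\cdots\to Y_1=\ast$ has only $n-1$ nontrivial stages. After subdivision we may take $X$ to be a bounded-geometry metric simplicial complex, fix a graded basis of each $V_k=\Hom(\pi_k(Y),\mathbb R)$, and note that the formal dilatation is normalized so that $\varphi$ sends each chosen degree-$k$ generator to a closed form of $L^\infty$-norm $O(M^k)$, i.e.\ one whose class is realized by an $O(M)$-Lipschitz map into a product of copies of $K(\mathbb Z,k)$. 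Finally, fix a genuine extension $\tilde u\colon X\to Y$ in the relative homotopy class of $\varphi$ and write $\tilde u_k$ for its composite with $Y\to Y_k$; the content of this hypothesis is precisely that $\tilde u$ witnesses the vanishing of the \emph{integral} obstructions invisible to a DGA map.

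The induction builds genuine maps $f_k\colon X\to Y_k$ extending $u_k:=\tilde u_k|_A$ with $f_k\simeq\tilde u_k$ rel $A$ and $\Lip f_k\le C(M+1)$, the constant growing by a bounded factor each stage. The base case $Y_1=\ast$ is vacuous. At stage $k$, the fibration $Y_k\to Y_{k-1}$ is principal, induced by the $k$-invariant $\kappa_k\colon Y_{k-1}\to K(\pi_k Y,k+1)$. Since $\tilde u_{k-1}$ lifts (to $\tilde u_k$) and $f_{k-1}\simeq\tilde u_{k-1}$ rel $A$, the obstruction $f_{k-1}^*\kappa_k\in H^{k+1}(X,A;\pi_k Y)$ vanishes, so $\kappa_k\circ f_{k-1}\colon X\to K(\pi_k Y,k+1)$ is nullhomotopic rel $A$, and it is $O(\Lip f_{k-1})=O(M+1)$-Lipschitz after passing to a fixed finite skeletal model. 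By the quantitative filling estimates of \S2 and their standard consequences for Eilenberg--MacLane targets, this map admits a nullhomotopy rel $A$ whose time-slices are $O(M+1)$-Lipschitz; reading such a nullhomotopy as a lift in the homotopy-pullback model of $Y_k$ produces a lift $g_k$ of $f_{k-1}$ with $\Lip g_k=O(M+1)$.

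It remains to move $g_k$ into the correct relative class $[\tilde u_k]$ without spoiling the Lipschitz bound. The lifts of $f_{k-1}$ form a torsor over $H^k(X,A;\pi_k Y)$, and $g_k$ differs from $[\tilde u_k]$ by a class $\gamma$ there. Its torsion part lies in a fixed finite group, contributing a bounded amount; its free part is the difference of the rational fiber-coordinates of $g_k$ and of $\tilde u_k$, each controlled by the $L^\infty$-norms of forms pulled back from the minimal model --- $O((M+1)^k)$ since $\Lip g_k=O(M+1)$, and $O(M^k)$ by the formal-dilatation normalization since $\tilde u^*m_Y\simeq\varphi$. Hence $\gamma$ has a representing $k$-cochain of $\ell^\infty$-norm $O((M+1)^k)$ and so is realized by an $O(M+1)$-Lipschitz map $\beta_k\colon X\to K(\pi_k Y,k)$ rel $A$; acting on $g_k$ by $\beta_k$ through the fiberwise $K(\pi_k Y,k)$-action yields $f_k$, now in $[\tilde u_k]$ rel $A$ and still with $\Lip f_k=O(M+1)$. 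After $n-1$ steps put $f:=f_n$: it extends $u$, lies in the relative homotopy class of $\tilde u$ hence of $\varphi$, and has $\Lip f\le C(X,Y)(M+1)$. Lastly, $f^*m_Y$ and $\varphi$ are both DGA maps of norm $O((M+1)^\bullet)$ that are homotopic rel $A$, so a generator-by-generator extraction of small primitives (again the \S2 estimates) produces a small algebraic homotopy between them, showing the shadow is ``nearby''.

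The main obstacle is the quantitative lifting step: converting the vanishing of the integral obstruction $f_{k-1}^*\kappa_k$ into a genuine lift whose Lipschitz constant is \emph{linear}, not merely polynomial, in $\Lip f_{k-1}$ and $M$, and doing so compatibly with the fixed data on $A$ --- this is a quantitative obstruction theory in the spirit of \cite{FWPNAS,CDMW,CMW}. The one genuinely delicate point in the bookkeeping is that the freedom for adjusting the relative homotopy class of a lift is rationally trivial (it is the image of a connecting map built from the rationally null map $\Omega\kappa_k$), so one cannot blindly correct the rational fiber-coordinate; instead one must observe, as above, that this coordinate was already $O(M^k)$ because $\tilde u^*m_Y$ is homotopic to the formally bounded map $\varphi$. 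Everything else is homotopy theory with estimates carried out over a bounded number of stages, so that ``affine at each stage'' integrates to ``linear overall''.
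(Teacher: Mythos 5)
Your proposal inducts up the Postnikov tower of $Y$, whereas the paper inducts over the skeleta of $X$ (producing at each step a more controlled map $g_k$ together with a DGA homotopy $\Phi_k$ from $g_k^*m_Y$ to $\ph$ that remains controlled on $X^{(k)}$). These are genuinely different decompositions, and your Postnikov route is a natural thing to try. However, there are two real gaps.

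The fatal one is your last step. You claim that once $f$ is built, the fact that $f^*m_Y$ and $\ph$ are both of norm $O((M+1)^\bullet)$ and homotopic rel $A$ means ``a generator-by-generator extraction of small primitives (again the \S2 estimates) produces a small algebraic homotopy between them.'' This is false in general: two small, homotopic DGA maps need not admit a small homotopy. The paper gives an explicit counterexample (remark (f) after Theorem \ref{thm:htpy}) for maps modelling $S^4\times S^3\to S^4$, where the pairs $a\mapsto\epsi y,\ b\mapsto\epsi^2 z$ and $a\mapsto\epsi y,\ b\mapsto\epsi^2z+xy$ have uniformly bounded norm, are homotopic, yet \emph{any} homotopy between them has a degree-$3$ term of size $(2\epsi)^{-1}$. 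The conclusion of Theorem \ref{thm:main} demands $\Dil_{1/L}(\Phi)\le C(L+1)$ for the connecting homotopy (and this is used in later applications, e.g.\ Theorem \ref{thm:weird}), so the bound cannot be recovered post hoc; the paper's proof goes to considerable trouble---the second-order homotopies $\Psi_{k+1}$ in Figure \ref{fig}---precisely to transport a controlled $\Phi_k$ through each modification $H_{k+1}$.

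The second gap is the quantitative Postnikov-lifting step itself: converting the vanishing of $f_{k-1}^*\kappa_k\in H^{k+1}(X,A;\pi_k Y)$ into a lift with Lipschitz constant $O(M+1)$, for a fibration whose total and base spaces are not naturally compact metric complexes, and doing so compatibly with a torsor correction over $H^k(X,A;\pi_k Y)$, is of essentially the same difficulty and character as the shadowing principle. The \S2 results are $L^\infty$ isoperimetric inequalities for \emph{forms}, not statements about bounded nullhomotopies of maps to Eilenberg--MacLane spaces or bounded lifts through Postnikov fibrations; the bridge from one to the other is exactly the content of the Guth-style rounding construction the paper carries out, and just citing \cite{FWPNAS,CDMW,CMW} does not supply it for a general fibration $Y_k\to Y_{k-1}$. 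Your remark that a ``fixed finite skeletal model'' handles the metrization is the right instinct but would need to be actually done, including making the $k$-invariant and the principal action Lipschitz in that model.
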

The precise statement is given in Theorem \ref{thm:main}.

The significance of this is that platonic maps are sometimes easier to construct
than genuine maps, since they have fewer moving parts; this makes it easier to
construct new geometrically bounded objects.  For example, it is much easier to
produce a homotopy in the algebraic world than the geometric one, and this is
what gives us our powerful results about homotopies.  Other new results are
obtained by harnessing scaling automorphisms of DGAs.  On the other hand, in
order to realize the full potential of the shadowing principle, we need
additional techniques for constructing DGA homomorphisms.

\subsection{The method of Guth}

The proof of the shadowing principle is inspired by Larry Guth's recent
streamlined proof \cite{Guth} of the main homotopical result of \cite{CDMW}.  We
give an outline of this proof here.

Suppose we have a nullhomotopic $L$-Lipschitz map $f:S^m \to S^n$, where either
$n$ is odd or $m<2n-2$; we would like to construct a $C(m,n)L$-Lipschitz
nullhomotopy $F:S^m \times [0,1] \to S^n$.  First of all, we may assume, by a
quantitative simplicial approximation result, that $f$ is the composition of a
simplicial map from some triangulation of $S^m$ at scale
$\sim 1/L$\footnote{I.e.\ with simplices uniformly bilipschitz to a linear
  simplex with edgelength $1/L$.} to $\partial\Delta^{n+1}$ and a smooth map that
contracts all but one of the faces of $\partial\Delta^{n+1}$.  Next, we choose
some uncontrolled nullhomotopy $F$ of this map.  We will homotope this to a
controlled homotopy.

First, choose a triangulation $X$ of $S^m \times [0,1]$ also at scale $\sim 1/L$,
restricting to our triangulation of $S^m$ at $t=0$.  We will proceed by induction
on the skeleta of this triangulation.  The key point is that at the $k$th step we
will make sure that the $k$-simplices of $X$ are mapped to $S^n$ in one of a
fixed set of ways, depending only on $m$ and $n$.  Then the Lipschitz constant is
bounded by
$$\sim (\text{max Lipschitz constant of a restriction to a simplex})\cdot
(\text{min edge length of }X)^{-1}.$$

For $k<n$, we do this simply by sending the whole $k$-skeleton to the basepoint
of $S^n$.  This may make the homotopy even worse than it was on higher simplices,
but we will fix this in future steps.  This gives us a homotopy $F_{n-1}$ which
sends $X^{(n-1)}$ to a point; if $m<n$, we are done.

The $n$th step is the trickiest, and it is here that we use some algebra.  Note
that since $F_{n-1}|_{X^{(n-1)}}$ is constant, $F_{n-1}$ has a well-defined degree on
$n$-simplices.  Let $c \in C^n(X)$ be the cochain whose value on simplices is
this degree.  Since $F_{n-1}$ is defined on $(n+1)$-cells, this is a cocycle.

We compare this to another cocycle that describes the ``ideal'' behavior of such
a nullhomotopy.  The piecewise smooth form $f^*d\vol \in \Omega^n(S^m)$ is exact
since $f$ is nullhomotopic.  Moreover, $\lVert f^*d\vol \rVert_\infty \leq L^n$;
by an isoperimetric result for forms, reproven in this paper as Lemma
\ref{lem:IP}, we can find an $\alpha \in \Omega^{n-1}(S^m)$ such that
$d\alpha=f^*d\vol$ and $\lVert\alpha\rVert_\infty \leq C(m,n)L^n$.  Let
$\pi:X \to S^m$ be the obvious projection; then we define a cocycle
$w \in C^n(X;\mathbb{R})$ by sending each $n$-simplex $p$ to
$$w(p)=\int_p \left((1-t)\pi^*f^*d\vol+(-1)^n\pi^*\alpha \wedge dt\right).$$
The $L^\infty$ bound then implies that $\lvert w(p) \rvert \leq 1+C(m,n)$.

Note that $w=c$ on the simplices of $S^m \times \{0,1\}$.  Thus
$w-c \in C^n(X,S^m \times \{0,1\};\mathbb{R})$ is a relative cocycle and hence
a relative coboundary since $m \geq n$, $w-c=\delta b$ for some
$b \in C^{n-1}(X,S^m \times \{0,1\};\mathbb{R})$.  Now we homotope $F_{n-1}$ to a
map $F_n$ as follows.  The homotopy will be constant on $X^{(n-2)}$.  On each
$(n-1)$-simplex $q$, we make the homotopy trace out a map of degree $[b(q)]$,
i.e.~the nearest integer to $b(q)$, and return to the constant map to the
basepoint.  This then fixes the degree of $F_n$ on each $n$-simplex $p$; this
degree within distance $(n+1)/2$ from $(c+\delta b)(p)=w(p)$.  This is bounded by
a constant depending only on $m$ and $n$; for each degree below this bound, we
fix a specific map on $\Delta^n$ and homotope to that map.

Now let $k>n$; by induction, we have a map $F_{k-1}$ which takes a finite set of
values on $(k-1)$-simplices.  In particular, there is a finite set of values that
it can take on the boundary of any $k$-simplex $p$.  Moreover, given
$F_{k-1}|_{\partial p}$, the possible relative homotopy classes of $F_{k-1}|_p$ form
a torsor for $\pi_k(S^n)$, which is finite by assumption.  Thus we can fix a map
in each such relative homotopy class and homotope to an $F_k$ whose restriction
to $p$ is that map.  Once $k=m+1$, we have completed the proof.

Let us return now to the $n$th step.  In this paper, we reinterpret this as
follows.  The form
$$(1-t)\pi^*f^*d\vol+(-1)^n\pi^*\alpha \wedge dt$$
should be thought of as an algebraic nullhomotopy of the form $f^*d\vol$ which
describes $f$ up to finite uncertainty; this is made precise in \S3.  We
construct our controlled nullhomotopy by pulling the uncontrolled homotopy $F$
as close as we can to the controlled, but purely algebraic one.

In more general situations, the map and its nullhomotopy cannot be fully
described by a single form.  Instead, the description of a map $X \to Y$ is an
algebra homomorphism $\mathcal{M}_Y^* \to \Omega^*X$.  However, we can still use
roughly the same procedure: take an uncontrolled geometric homotopy $F$ and a
controlled algebraic one $\Phi$; as long as they are homotopic to each other in
the algebraic sense, we can gradually pull $F$ towards $\Phi$, skeleton by
skeleton, until we get a geometric homotopy which is close to $\Phi$, and
therefore controlled.  This works not only for homotopies but for maps in a
relative homotopy class in general.

\subsection{Seeing outside the cave} \label{S:outside}

The method outlined in the previous section has an important weakness: in order
to get the bound we want, we need to first find a DGA homomorphism that satisfies
it.  In the case of homotopies, there is an algorithm described in \S3 which
constructs such a homomorphism.  The bound obtained this way, however, while
sharp in some instances, is not, for example, sharp in the case of maps
$S^3 \to S^2$.  Here the algebraic method yields a quadratic bound, whereas I
strongly suspect that the true bound is linear.  In fact, we produce an only
slightly superlinear bound in Theorem \ref{thm:weird} using a somewhat mysterious
ad hoc method.

Similarly, for elements of $\pi_n(Y)$ we can always produce not-too-large
representatives algorithmically, but if we use the most general construction such
representatives will not say anything nontrivial about distortion.

To highlight some of the uncertainties, we come back to maps $f:S^3 \to S^2$.  To
construct an algebraic nullhomotopy of such a map, it is enough to find a 1-form
$\alpha \in \Omega^1(S^3)$ with $d\alpha=f^*d\vol$ and $\eta \in \Omega^2(S^3)$
with $d\eta=\alpha \wedge f^*d\vol$.  By the aforementioned isoperimetric result,
we can find $\eta$ with $\lVert\eta\rVert_\infty \lesssim (\Lip f)^4$.  A quick
argument (provided by the anonymous referee and explained in \S\ref{S:spheres})
shows that this bound cannot in general be improved by choosing the forms in a
more clever way.  At the same time, Sasha Berdnikov \cite{Berd} has shown that
linear homotopies can always be constructed in this setting.  Thus the obvious
method of constructing algebraic homotopies cannot provide a sharp geometric
bound.

Of course, the problem does reduce to a question about whether there are
homomorphisms $\mathcal{M}_{S^2}^* \to \Omega(S^3 \times [0,1])$ with certain
$L^\infty$ bounds on the images of the generators.  The point is that the
existence of such homomorphisms seems potentially just as hard to decide as the
original questions about maps and homotopies.  The same sort of questions bedevil
any attempts at resolving Conjectures \ref{conj:growth} and \ref{conj:dist}
purely through DGA methods; all the proofs we have use some kind of self-maps
that allow us to use one representative to generate a whole class of maps,
whether geometrically or algebraically.

\subsection{Extensions and generalizations}

The shadowing principle has the advantage of being completely local.  Therefore a
number of extensions which are not shown in this paper nevertheless seem
achievable.  The author would like to thank David Kazhdan, Shmuel Weinberger, and
Tali Kaufman for raising some of these points.
\begin{enumerate}[leftmargin=*]
\item The results should hold for nilpotent targets as well as simply connected
  ones.  This requires more complicated induction procedures and perhaps some
  stipulations regarding basepoints.
\item The results should hold for various extensions of rational homotopy theory,
  once one has a good understanding of the relevant algebra.  This includes
  equivariant rational homotopy theory (see \cite{Scull}) and perhaps the
  rational homotopy theory of more general diagrams of spaces \`a la \cite{DF}
  (although this has never been explicitly developed) as well as sections of a
  fibration, or more generally for rational homotopy theory of maps fibered over
  some fixed base space.
\item The theorem holds for the case where the domain is an infinite complex of
  bounded geometry (although we do not give any applications that use this.)  In
  such complexes, one could have DGA homomorphisms which are not bounded, but are
  controlled within an $r$-ball around some basepoint by some function $f(r)$.
  Then by rescaling or varying the sizes of subdivisions, we can get an honest
  map with similar control on the Lipschitz constant.
\item One interpretation of the shadowing principle is that in some sense, the
  map $\Map(X,Y) \to \Hom(\mathcal{M}^*_Y,\Omega^*X)$ induced by pullback of the
  minimal model is ``almost dense'' and induces a near-equivalence between the
  Lipschitz constant on $\Map(X,Y)$ and a similar geometric functional on the
  other space.  One could ask whether there is a stronger notion of connectivity
  between the Morse landscapes of these functionals; this needs to be done with
  some care since the map is not a homotopy equivalence.  The $\pi_0$ version of
  this question is this: given a path in $\Hom(\mathcal{M}^*_Y,\Omega^*X)$
  between two genuine maps which is in the relative homotopy class of a genuine
  homotopy, can we find a genuine homotopy with similar geometry to the path?
  The $\pi_n$ questions can be formed similarly.  It seems that the answer must
  be yes, but to confirm this one needs to understand paths in the space of
  homomorphisms, most of which are not algebraic homotopies in the sense we use.

  We can find a closer topological equivalence by restricting to homomorphisms of
  polynomial forms with rational coefficients.  These are closely related to maps
  from $X$ to the rationalization of $Y$, as discussed in \cite{BrSz}.  However,
  these do not usually come from pullbacks of maps, so we would still have to use
  the space $\Hom(\mathcal{M}^*_Y,\Omega^*X)$ as a common refinement.  Since
  smooth forms seem closely approximable by polynomials, it is likely that the
  geometry of this space is likewise quite similar.
\end{enumerate}

\subsection{Applications to geometric problems}

One of the main motivations for studying quantitative algebraic topology is to
try to understand the solutions to problems in geometric topology.  The long-time
method of doing geometric topology is to reduce it to problems in homotopy
theory, then solve those problems using algebraic methods.  One could therefore
attempt to understand the solutions by putting geometric bounds on both the
reduction and the homotopy theory.  Here are some examples where this has been
achieved.

\begin{enumerate}[leftmargin=*]
\item In \cite{CDMW}, we gave a bound on the size of a nullcobordism of a
  nullcobordant manifold.  This is a quantitative version of Thom's cobordism
  theorem.  Here, the algebraic problem was a special case of Conjecture
  \ref{conj:htpy}; the geometric problem was to get a bound on the size of Thom's
  construction.
\item Already in \cite{GrHED}, Gromov uses his estimate on the growth of homotopy
  classes to bound the growth of embedding spaces.  By a theorem of Haefliger
  \cite{HaeBki}, when $2n>3(m+1)$, isotopy classes of embeddings of an
  $m$-manifold $M$ in $\mathbb{R}^n$ are in bijection with
  $\mathbb{Z}/2\mathbb{Z}$-equivariant homotopy classes of maps
  $(M \times M) \setminus \Delta \to S^{n-1}$.  One direction is easy: every
  embedding $f:M \to \mathbb{R}^n$ is sent to the map
  $$(m,n) \mapsto \frac{f(m)-f(n)}{\lvert f(m)-f(n) \rvert}.$$
  After forgetting a tubular neighborhood of the diagonal, this correspondence
  sends $L$-bilipschitz maps to $O(L^2)$-Lipschitz ones.  Using (the free
  $\mathbb{Z}/2\mathbb{Z}$-equivariant version of) Gromov's polynomial estimate
  and this explicit procedure, one sees that the number of homotopy classes of
  $L$-bilipschitz embeddings of $M$ in $\mathbb{R}^n$ is at most polynomial in
  $L$.
\end{enumerate}
It would be interesting to investigate the space of such embeddings in greater
detail.  The methods of this paper provide solutions to some of the requisite
algebraic problems.  However, translating this into embedding theory requires a
deeper, more geometric understanding of the correspondence going from equivariant
maps back to embeddings.
\begin{open}
  (1) Find a sharp estimate of the number of embeddings of some $M$ in
  $\mathbb{R}^n$ with a given bilipschitz constant or other geometric bound.
  (2) Find a bound on the difficulty of isotoping two isotopic embeddings (again,
  in terms of the bilipschitz constant or some other geometric bound.)
\end{open}
We hope that our results will induce more work on the geometric side of these
problems and many others.

\subsection{Structure of the paper}

Section 2 introduces some technical results about the geometry of simplicial
complexes which underpin the various proofs.  In Section 3, we discuss rational
homotopy theory in detail, including the geometric estimates introduced by
Gromov.  In Section 4, we state and prove the main technical result.
Applications, including the proofs of Theorems \ref{summary:htpy} and
\ref{summary:sym}, are discussed in the last section.

\subsection{Acknowledgements}

I would like to first of all thank Shmuel Weinberger, who introduced me to these
ideas and was present for every stage of their development, and without whom none
of this would have been possible.  The paper is part of a project that has been
going on for several years; our collaborators Steve Ferry, Greg Chambers,
and Dominic Dotterrer also contributed many of the ideas which pervade this work.

I would also like to thank the Israel Institute for Advanced Studies, where this
paper was conceived and largely written.  Without the IIAS, I could not have
benefitted from the wisdom of David Kazhdan and Tali Kaufman, who suggested
various generalizations of the main result and directions for further study.  I
am grateful to Sasha Berdnikov for pointing out an important error in an earlier
version, and to the anonymous referee for an extremely thorough reading leading
to numerous corrections and suggestions, from sign errors to major issues.
Finally, this paper owes its existence to Larry Guth and his incredibly
thoughtful and clever reinterpretation of our earlier ideas.  I cannot thank him
enough.

\section{Geometric preliminaries}

\subsection{Simplicial approximation}

A key principle in this paper, carried over from \cite{CDMW} and \cite{CMW}, is
local standardization of maps.  The simplest kind of such standardization is
simplicial approximation.  It was shown in \cite{CDMW} that on the right sort of
subdivision, simplicial approximation can be performed without increasing
Lipschitz constants too much.

Let $X$ be a simplicial complex with the standard metric.  We say a subdivision
of $X$ is \emph{$L$-regular} if the simplices are $r$-bilipschitz to a standard
simplex with edge length $1/L$, for some $r$ which perhaps depends on dimension.
The
most
common notion of subdivision used is barycentric subdivision, which is not
regular---the simplices get progressively skinnier.  However, several regular
subdivision schemes are available, including the following:
\begin{itemize}[leftmargin=*]
\item Add a central vertex to each $k$-simplex to subdivide it into $k+1$ cubes.
  Cubulate each such cube at scale $1/L$, then break each small cube into
  simplices in a standard way.  This method was described in \cite{FWPNAS}.
\item Slice each simplex into approximately $L$ slices of equal width along
  planes parallel to each face.  This subdivides it into a finite number of types
  of polyhedra.  Apply a standard simplicial subdivision to each.  A specific
  such method is given in \cite{EdGr}; its advantage is that $r$ can be taken to
  be a constant $\sqrt{2}$, not depending even on dimension.
\end{itemize}
Such subdivisions are useful for simplicial approximation of maps.
\begin{prop}[Quantitative simplicial approximation theorem]
  \label{prop:qSAT}
  For finite simplicial complexes $X$ and $Y$ with piecewise linear metrics,
  there is a constant $C$ such that any $L$-Lipschitz map $f:X \to Y$ has a
  $C(L+1)$-Lipschitz simplicial approximation via a homotopy of thickness
  $C(L+1)$ and length $C$.
\end{prop}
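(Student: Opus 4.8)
\textit{Proof proposal.} The plan is to run the classical proof of the simplicial approximation theorem starting from a regular subdivision at scale $\sim 1/(L+1)$, keeping track of Lipschitz constants at every step; the point of the regularity and of the small scale is to force all of the local combinatorics to take place inside pieces of $Y$ of complexity bounded in terms of $Y$ alone.

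First I would fix one of the regular subdivision schemes described above and let $X'$ be the resulting subdivision of $X$ into simplices $r(X)$-bilipschitz to a standard simplex of edge length $\epsi/(L+1)$, where $\epsi=\epsi(Y)>0$ is chosen below; since $X$ is finite, $X'$ then has bounded geometry with constants depending only on $X$. Let $\lambda=\lambda(Y)>0$ be a Lebesgue number for the open-star cover $\{\st_Y(w)\}_{w\in Y^{(0)}}$ of $|Y|$. Choosing $\epsi$ so that $\diam f(\st_{X'}(v))\le L\cdot\diam\st_{X'}(v)\le\lambda$ for every vertex $v$ of $X'$ --- which holds once $\epsi\le\lambda/(2r)$, using $L\le L+1$ --- gives the usual star condition: for each vertex $v$ of $X'$ there is a vertex $g(v)$ of $Y$ with $f(\st_{X'}(v))\subset\st_Y(g(v))$. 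The standard argument shows $v\mapsto g(v)$ extends to a simplicial map $g\colon X'\to Y$, and since $g$ sends the vertices of each simplex of $X'$ (which has minimal edge length $\sim\epsi/(L+1)$ and controlled shape) affinely to vertices of $Y$ at distance $O(1)$, the elementary estimate for the Lipschitz constant of a simplicial map yields $\Lip g\le C(L+1)$.

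For the homotopy I would use the classical straight-line homotopy. The star condition implies that for every $x$ the points $f(x)$ and $g(x)$ lie in a common simplex of $Y$; more precisely, if $x$ lies in the open simplex of $X'$ with vertices $v_0,\dots,v_k$, then $f(x)\in\bigcap_i\st_Y(g(v_i))=\st_Y(\sigma)$ where $\sigma=g([v_0,\dots,v_k])$, so the carrier of $f(x)$ is a simplex $\tau\supseteq\sigma\ni g(x)$. Hence $H(x,t):=(1-t)f(x)+tg(x)$, the convex combination taken inside that carrier, is well defined and continuous, with $H(\cdot,0)=f$ and $H(\cdot,1)=g$. Each track $t\mapsto H(x,t)$ is a straight segment inside one simplex of $Y$, so it has length at most $\max_\tau\diam\tau=C$, bounding the length of the homotopy by $C$.

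The step I expect to be the real obstacle is the bound on thickness, i.e.\ that $H(\cdot,t)$ is $C(L+1)$-Lipschitz for each fixed $t$. The difficulty is that the interpolation lives in the non-convex space $|Y|$, whose PL metric need not be comparable to the $\ell^1$-metric on barycentric coordinates globally (consider a long subdivided arc). The remedy is locality: it suffices to bound $\Lip(H(\cdot,t)|_\sigma)$ for each closed simplex $\sigma$ of $X'$ and then globalize using geodesics in $|X|$. On $\sigma$ we have $\diam f(\sigma)\le L\diam\sigma=O(1)$ and $g(\sigma)$ is a single simplex near $f(\sigma)$, so $f$, $g$, and hence $H(\cdot,t)$ take values in a subcomplex $N_\sigma\subset Y$ --- say the closed star of the carrier of $g(\sigma)$ --- of complexity bounded in terms of $Y$. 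Writing $H(\cdot,t)$ in barycentric coordinates shows those coordinates are $C(L+1)$-Lipschitz on $\sigma$: the coordinates of $f$ are $\le C(Y)L$-Lipschitz because barycentric coordinates on $Y$ are Lipschitz and $f$ is $L$-Lipschitz, while the ``$g$-part'' is assembled from barycentric coordinates on $X'$, which are $\le C(L+1)$-Lipschitz since the simplices of $X'$ have edge length $\sim 1/(L+1)$. Finally, on the bounded-complexity subcomplex $N_\sigma$ the PL metric is bilipschitz to the barycentric-coordinate metric with constant depending only on $Y$, giving $\Lip(H(\cdot,t)|_\sigma)\le C(L+1)$ and hence thickness $\le C(L+1)$. (This Lipschitz bookkeeping for simplicial approximation over a regular subdivision is essentially the content of \cite{CDMW}, from which the proposition is adapted.)
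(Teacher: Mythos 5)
The paper itself does not prove Proposition~\ref{prop:qSAT}: it introduces regular subdivisions, states the proposition, and attributes the underlying argument to \cite{CDMW} without reproducing it. So there is no ``paper's proof'' to compare against; your job is to supply the missing argument, and what you have written is a sound reconstruction of exactly the quantitative bookkeeping that reference carries out.

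The structure is right on all three counts. Subdividing at scale $\sim\epsi/(L+1)$ with $\epsi$ pinned down by a Lebesgue number for the open-star cover of $Y$ gives the star condition, and the resulting simplicial map $g$ has Lipschitz constant $\lesssim L+1$ for the elementary reason you state (affine from a simplex of inradius $\sim 1/(L+1)$ to a simplex of bounded diameter, with the $r$-regularity guaranteeing the inradius bound). The straight-line homotopy $H(x,t)=(1-t)f(x)+tg(x)$ is well defined because $f(x)$ and $g(x)$ share the carrier $\tau$ of $f(x)$ (note that if $g$ collapses vertices of a simplex of $X'$, $\sigma=g([v_0,\dots,v_k])$ is still the simplex on the \emph{set} of images, and the star-condition argument goes through unchanged); each track moves at constant speed $|g(x)-f(x)|\le\diam\tau\le C$, giving length $C$. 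And you correctly identify the only genuinely delicate point, the thickness bound, and handle it the right way: since the PL metric on $Y$ is not globally comparable to the $\ell^1$-barycentric metric, you must localize to each simplex $\sigma$ of $X'$, observe that $H(\sigma\times[0,1])$ lands in a subcomplex $N_\sigma$ (the closed star of $g(\sigma)$) whose combinatorial complexity depends only on $Y$, and on $N_\sigma$ use the uniform bilipschitz comparison between the PL metric and barycentric coordinates. The one detail worth spelling out in a final write-up is that the barycentric coordinate of $H(x,t)$ at a vertex $w$ of $N_\sigma$ really is $(1-t)\mathrm{bary}_w(f(x))+t\,\mathrm{bary}_w(g(x))$ uniformly in $x\in\sigma$ (because $\mathrm{bary}_w$ vanishes off the relevant carriers), so that the $C(Y)L$-Lipschitz bound on $\mathrm{bary}_w\circ f$ and the $C(L+1)$-Lipschitz bound on $\mathrm{bary}_w\circ g$ combine linearly; but you clearly know this, and it does not affect the conclusion.
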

The main purpose of simplicial approximation in this paper, as in \cite{CDMW} and
\cite{CMW}, is to bound the behaviors of maps on simplices.  If there are only
finitely many things a map can do on a simplex, we can bound its Lipschitz
constant by the maximum Lipschitz constant of these restrictions.  Therefore it
is useful to extract this more general property and give it a name.

Let $\mathcal{F}_k$ be a finite set of maps $\Delta^k \to Y$, for some space $Y$.
If $X$ is a simplicial complex, a map $f:X \to Y$ is \emph{$\mathcal{F}$-mosaic}
if all of its restrictions to $k$-simplices are in $\mathcal{F}_k$.  Here,
$\mathcal{F}$ is a semi-simplicial set whose simplices in degree $k$ are
$\mathcal{F}_k$, which can be formed naturally via restriction maps.  We can
think of this \emph{shard complex} as a finite subcomplex of the singular
simplicial set of $Y$.

We refer to a collection of maps as \emph{uniformly mosaic} if they are all
$\mathcal{F}$-mosaic with respect to a fixed unspecified shard complex
$\mathcal{F}$.

The main advantage of this definition is that the property of being mosaic is
preserved under postcomposition.  Thus for example if we have a homotopy
equivalence $\ph:Z \to Y$ from a simplicial complex $Z$ to a cell complex $Y$
which contracts the 1-skeleton, then we can simplicially approximate a map
$X \to Z$, then compose with $\ph$ to get an $\mathcal{F}_\ph$-mosaic map for
some fixed $\mathcal{F}_\ph$ whose 1-skeleton is a point.

\subsection{Quantitative antidifferentiation}

De Rham algebras exist in several variations, including smooth and piecewise
polynomial.  In this paper, we also use the algebra of simplexwise smooth forms
on a simplicial complex.  This has several advantages: such forms can be built
skeleton-by-skeleton and this is the natural context for pullbacks of smooth
forms on a manifold by simplexwise smooth functions.  Given a simplicial complex
$X$, this is the algebra we will denote $\Omega^*X$; for a manifold with
boundary the same notation will denote the smooth forms.

A key step in both Gromov's earliest arguments in \cite{GrHED} and in this paper
is quantitative antidifferentiation of forms: given an exact $k$-form with
$L^\infty$-norm $B$, produce a $(k-1)$-form which it bounds with $L^\infty$-norm
$CB$, with $C$ depending on the space and perhaps some other requirements we
impose.  Gromov sketches an algorithm for this using quantitative Poincar\'e
lemmas to build antidifferentials skeleton by skeleton, and this was explained in
greater detail in Joshua Maher's unpublished thesis \cite{Maher}.  We give a full
proof of a similar approach here.

A duality theorem shows that this isoperimetric inequality is closely related to
the Federer--Fleming isoperimetric inequality for currents in $X$.  This kind of
duality was previously explored in \cite{CDMW}.

\subsubsection*{Quantitative Poincar\'e lemmas}

The goal of this subsection is to prove the following.
\begin{lem} \label{lem:IP}
  Let $A \subset X$ be a finite simplicial pair with the standard simplexwise
  metric.  We use the notation $\Omega^*(X,A)$ to denote forms whose restriction
  to $A$ is zero.  Then for every $k$ there is a constant $C(k,X,A)$ such that
  for every exact form $\omega \in d\Omega^{k-1}(X,A)$, there is a form
  $\alpha \in \Omega^{k-1}(X,A)$ with $d\alpha=\omega$ and
  $\lVert\alpha\rVert_\infty \leq C(k,X,A)\lVert\omega\rVert_\infty$.
\end{lem}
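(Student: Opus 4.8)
The plan is to prove Lemma~\ref{lem:IP} by induction on the skeleta of $X$ relative to $A$, building the antiderivative $\alpha$ simplex by simplex while keeping quantitative control at every stage. The base case is a single simplex: on a standard $k$-simplex $\Delta$ (or more generally a fixed finite list of polyhedral pieces bilipschitz to one), the classical Poincar\'e lemma gives a linear contracting homotopy operator $h\colon\Omega^j(\Delta)\to\Omega^{j-1}(\Delta)$ with $dh+hd=\id$ on positive-degree forms, and one checks directly that $\lVert h\omega\rVert_\infty\le C_0\lVert\omega\rVert_\infty$ for a constant depending only on the geometry of $\Delta$ (the cone operator integrates along radial segments of bounded length). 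Since the edge length and shape of each simplex of $X$ is fixed (we are not subdividing here; the constant is allowed to depend on $X$ and $A$), there are only finitely many isometry types of simplices, so a single constant works for all of them. One must also treat the relative condition: on simplices contained in $A$ we do nothing, and on simplices meeting $A$ we use a version of the homotopy operator that fixes the forms vanishing on the relevant faces, coning from a vertex or point away from $A$.

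The inductive step goes as follows. Suppose $\omega\in d\Omega^{k-1}(X,A)$ and we have already produced $\alpha^{(m-1)}\in\Omega^{k-1}$ on the $(m-1)$-skeleton $X^{(m-1)}\cup A$, vanishing on $A$, with $d\alpha^{(m-1)}=\omega|_{X^{(m-1)}}$ and $\lVert\alpha^{(m-1)}\rVert_\infty\le C_{m-1}\lVert\omega\rVert_\infty$. For each $m$-simplex $\sigma$ not in $A$, the form $\omega|_\sigma - d(\text{any smooth extension of }\alpha^{(m-1)}|_{\partial\sigma})$ is a closed $k$-form on $\sigma$ vanishing on $\partial\sigma$; it is exact because $\omega$ itself is globally exact (or: because $H^k(\Delta^m,\partial\Delta^m)=0$ when $k\le m$, and when $k>m$ there is nothing to check since $\Omega^{k-1}$ restricted to an $m$-simplex already carries the form). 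Applying the controlled relative Poincar\'e operator on $\sigma$ and patching, we obtain $\alpha^{(m)}$ on $X^{(m)}\cup A$ with the desired properties and $C_m = C_{m-1} + C_0(1 + C_{m-1})$ or so, multiplying by a bounded factor at each of the finitely many stages. The only subtlety is extending $\alpha^{(m-1)}|_{\partial\sigma}$ to a simplexwise-smooth form on $\sigma$ with $L^\infty$-norm controlled by that on $\partial\sigma$; this is again a fixed-geometry extension problem (e.g.\ via a collar of $\partial\sigma$ and a cutoff), solvable with a constant depending only on the simplex.

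The main obstacle, and the place requiring genuine care rather than bookkeeping, is the quantitative bound on the Poincar\'e homotopy operator itself together with the extension step, i.e.\ making sure that at no point does one lose control—in particular, the cone operator $h$ produces forms whose $L^\infty$-norm on the standard simplex is bounded, but one must verify this for the \emph{simplexwise-smooth} de Rham complex used here and check that coefficient functions, not just the form's values on unit frames, stay bounded when we restrict, extend, and cone. A secondary point worth isolating is that the constant is permitted to blow up with $\dim X$ (there is no uniformity claimed across complexes), so the inductive accumulation $C_m$ growing like a tower in $\dim X$ is harmless; the whole content is that for \emph{fixed} $(X,A)$ a finite constant exists, which the skeletal induction delivers in at most $\dim X$ steps.
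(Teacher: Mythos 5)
The key gap in your argument is at the transition to the $k$-skeleton, which is exactly where the global cohomology of the pair $(X,A)$ enters and where the paper does something you've omitted. When $m=k$, the form
$\omega|_\sigma - d\bigl(\text{extension of }\alpha^{(k-1)}|_{\partial\sigma}\bigr)$
is a top-degree form on a $k$-simplex $\sigma$ vanishing on $\partial\sigma$, and the obstruction to it being exact rel $\partial\sigma$ is its integral, since $H^k(\Delta^k,\partial\Delta^k;\mathbb R)\cong\mathbb R$ — your claim that ``$H^k(\Delta^m,\partial\Delta^m)=0$ when $k\le m$'' is false at $k=m$. You offer the alternative justification ``$\omega$ is globally exact,'' but that only gives you \emph{some} primitive $\beta$ with $d\beta=\omega$; it does not give a primitive with controlled norm, and for an arbitrary controlled choice of $\alpha^{(k-1)}$ there is no reason why $\int_{\partial\sigma}\alpha^{(k-1)}$ should equal $\int_\sigma\omega$ for every $k$-simplex $\sigma$. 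That compatibility is a genuine global linear constraint across the whole $(k-1)$-skeleton, not something that can be fixed simplex by simplex.

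The paper resolves this by first passing to simplicial cochains: integrate $\omega$ over $k$-simplices to get $w\in C^k(X,A;\mathbb R)$, observe that $w$ is a simplicial coboundary (by de~Rham), and use the finite-dimensionality of $B^k(X,A)$ to find $a\in C^{k-1}(X,A)$ with $\delta a=w$ and $\lVert a\rVert_\infty\le c_0\lVert w\rVert_\infty$. One then \emph{builds} $\alpha$ on the $(k-1)$-skeleton to have prescribed integrals $\int_\tau\alpha=a(\tau)$ (via a bump function times the volume form on each $(k-1)$-simplex $\tau$), so that the Stokes compatibility on $k$-simplices holds by construction, and only then runs the skeletal extension using the second quantitative Poincar\'e lemma. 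Your skeletal induction for $m>k$ is essentially the same as the paper's, and the quantitative cone-operator estimate on a single simplex is a reasonable substitute for the paper's inductive proof of the first quantitative Poincar\'e lemma, but without the controlled simplicial coboundary step the argument does not close.
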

In order to prove this, we first show two important special cases which will also
be used later in the paper.
\begin{lem}[First quantitative Poincar\'e lemma]
  For every $0<k \leq n$, there is a constant $C_{n,k}$ such that the following
  holds.  Let $\omega \in \Omega^k(\Delta^n,\partial\Delta^n)$ be a closed smooth
  $k$-form which restricts to zero on the boundary of the standard simplex.  (If
  $k=n$, we require in addition that $\int_{\Delta^n} \omega=0$.)  Then there is a
  form $\alpha \in \Omega^{k-1}(\Delta^n,\partial\Delta^n)$ such that
  $d\alpha=\omega$ and
  $\lVert\alpha\rVert_\infty \leq C_{n,k}\lVert\omega\rVert_\infty$.
\end{lem}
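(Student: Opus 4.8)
The plan is to induct on the dimension $n$, using as the key device a \emph{cone operator based at a vertex} rather than at an interior point; this automatically produces a primitive that vanishes on every facet through that vertex, leaving only one facet to repair.

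Fix a vertex $v_0$ of $\Delta^n$, let $F_0$ be the opposite facet and $F_1,\dots,F_n$ the facets through $v_0$, and let $\Phi\colon[0,1]\times\Delta^n\to\Delta^n$, $\Phi(s,x)=(1-s)v_0+sx$, with associated cone operator $h\omega=\int_0^1\iota_{\partial_s}\Phi^*\omega\,ds$. Since $\Phi$ is a polynomial map into $\Delta^n$ with $\partial_s\Phi=x-v_0$, the homotopy formula gives $dh\omega=\omega$ for closed $\omega$ (the constant-map term kills the $k$-form $\omega$ for $k\ge1$), and the integrand is pointwise at most $s^{k-1}\diam(\Delta^n)\lVert\omega\rVert_\infty$, so $\lVert h\omega\rVert_\infty\le\frac{\diam(\Delta^n)}{k}\lVert\omega\rVert_\infty$. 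The advantage of basing the cone at $v_0$ is naturality: for $j\ne0$ the facet $F_j$ is convex and contains $v_0$, so $\Phi$ maps $[0,1]\times F_j$ into $F_j$, whence $h\omega$ pulls back on $F_j$ to the cone operator applied to $\omega|_{F_j}=0$. Thus $\alpha_0:=h\omega$ is a smooth primitive of $\omega$ with $\lVert\alpha_0\rVert_\infty\le C_n\lVert\omega\rVert_\infty$ that already vanishes on $F_1,\dots,F_n$; only $\alpha_0|_{F_0}$ need be corrected.

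Next I would study $\gamma:=\alpha_0|_{F_0}$. As $\omega$ and $\alpha_0$ vanish on $\partial F_0\subset F_1\cup\dots\cup F_n$ and $d\alpha_0=\omega$ vanishes on $F_0$, the form $\gamma$ is closed and lies in $\Omega^{k-1}(F_0,\partial F_0)$; identifying $F_0$ with the standard $\Delta^{n-1}$, when $k-1=n-1$ Stokes together with the vanishing of $\alpha_0$ on the other facets gives $\int_{F_0}\gamma=\int_{\partial\Delta^n}\alpha_0=\int_{\Delta^n}\omega=0$, so the inductive hypothesis applies and produces $\eta\in\Omega^{k-2}(F_0,\partial F_0)$ with $d\eta=\gamma$ and $\lVert\eta\rVert_\infty\le C\lVert\omega\rVert_\infty$. (If $k=1$ then $\gamma$ is a constant function vanishing on $\partial F_0$, hence zero, and nothing is needed; the base case $n=k=1$ is handled by $\alpha(x)=\int_0^x\omega$, which vanishes at both endpoints thanks to $\int_{\Delta^1}\omega=0$.) It remains to extend $\eta$ over $\Delta^n$. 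Writing each $x\in\Delta^n$ as $x=(1-t)v_0+t\,p(x)$ with $t=1-x_0$ the barycentric coordinate at $v_0$ and $p(x)\in F_0$, and choosing a smooth cutoff $\chi$ with $\chi\equiv1$ near $t=1$ and $\chi\equiv0$ for $t\le\tfrac12$, I set $\tilde\eta=\chi(t)\,p^*\eta$. This is a smooth $(k-2)$-form on all of $\Delta^n$ (it vanishes near $v_0$, where $p$ is singular), it restricts to $\eta$ on $F_0$ and to $0$ on each $F_j$, $j\ne0$ (because $p(F_j)\subset\partial F_0$, where $\eta$ vanishes), and on the support of $\chi(t)$ the map $p$ has bounded derivatives, so $\lVert\tilde\eta\rVert_\infty+\lVert d\tilde\eta\rVert_\infty\le C_n\lVert\omega\rVert_\infty$. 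Then $\alpha:=\alpha_0-d\tilde\eta$ is a primitive of $\omega$ vanishing on every facet, hence on $\partial\Delta^n$, with $\lVert\alpha\rVert_\infty\le C_{n,k}\lVert\omega\rVert_\infty$.

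I expect the main obstacle to be exactly this last extension step: the radial projection $p$ from $v_0$ has derivatives that blow up near $v_0$, so one cannot naively pull $\eta$ back; the cutoff supported away from $v_0$ fixes this, provided one checks that $d\tilde\eta=\chi'(t)\,dt\wedge p^*\eta+\chi(t)\,p^*\gamma$ stays bounded, which it does since each factor is bounded on the support of $\chi$. A secondary bit of care is the bookkeeping that propagates the top-degree integral condition one dimension down — that $\int_{\Delta^n}\omega=0$ forces $\int_{F_0}\gamma=0$ — which is the only place the extra hypothesis when $k=n$ is used.
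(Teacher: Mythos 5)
Your argument is correct and runs on the same inductive scheme as the paper's: reduce $(n,k)$ to $(n-1,k-1)$, build via a homotopy operator a primitive that automatically vanishes on all but one boundary face, then repair the remaining face by subtracting an exact form built from a cutoff times a retraction--pullback of the bounded primitive $\eta$ supplied by the inductive hypothesis. The only real difference is the geometric model: the paper passes to the unit cube and uses fiberwise integration along the first coordinate (Bott--Tu's compactly supported Poincar\'e lemma), whereas you stay on $\Delta^n$ and cone at a vertex; the paper's correction $-\epsi(x_1)\pi^*\bigl(\int_0^1\omega\bigr)-d\epsi\wedge\pi^*\eta$ is, once you notice it equals $-d\bigl(\epsi\,\pi^*\eta\bigr)$, exactly the analogue of your $-d\bigl(\chi(t)\,p^*\eta\bigr)$, with the cutoff protecting the opposite face in the paper's version and masking the cone singularity at $v_0$ in yours.
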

\begin{proof}
  We prove this by induction on $n$ and $k$, keeping $n-k$ constant.  We note
  also that instead of the simplex we can use the unit $n$-cube, which is
  diffeomorphic to it.

  The lemma is clear for $k=0$, since then $\omega$ is the zero function.  To do
  the inductive step, we use the usual proof of the Poincar\'e lemma with compact
  support, following \cite[\S1.4]{BottTu}.  Fix a smooth bump function
  $\epsi:[0,1] \to [0,1]$ which is $0$ near $0$ and $1$ near $1$.  By applying
  the lemma one dimension lower, we get a $(k-2)$-form $\eta$ on the $(n-1)$-cube
  with $\lVert\eta\rVert_\infty \leq C_{n-1,k-1}\lVert\omega\rVert_\infty$ and
  $d\eta=\int_0^1\omega$, the fiberwise integral of $\omega$ along the first
  coordinate $x_1$.  Then
  $$\omega=d\left({\textstyle\int_0^t\omega}-\epsi(x_1)
  \pi^*({\textstyle \int_0^1\omega})-d\epsi(x_1) \wedge \pi^*\eta\right),$$
  where $\pi$ is the projection to the $(n-1)$-cube along $x_1$.  This form
  restricts to zero on the boundary of the $n$-cube and its $\infty$-norm is
  bounded by $(2+C_{n-1,k-1}\lVert d\epsi \rVert_\infty)\lVert\omega\rVert_\infty.$
\end{proof}
From here, we show how to extend nonzero forms.
\begin{lem}[Second quantitative Poincar\'e lemma]
  For every $0<k \leq n$, there is a constant $C_{n,k}$ such that the following
  holds.  Let $\omega \in \Omega^k(\Delta^n)$ be a closed $k$-form, and let
  $\alpha_\partial \in \Omega^{k-1}(\partial\Delta^n)$ be a $(k-1)$-form such that
  $d\alpha_\partial=\omega|_{\partial\Delta^n}$.  (If $n=k$, we also require that the
  pair satisfies Stokes' theorem, that is,
  $\int_{\Delta^k} \omega=\int_{\partial\Delta^k} \alpha_\partial$.)  Then there is a
  $(k-1)$-form $\alpha \in \Omega^{k-1}(\Delta^n)$ extending $\alpha_\partial$ such
  that $d\alpha=\omega$ and $\lVert\alpha\rVert_\infty \leq
  C_{n,k}(\lVert\omega\rVert_\infty+\lVert\alpha_\partial\rVert_\infty)$.
\end{lem}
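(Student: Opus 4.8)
The plan is to reduce the extension problem to the already-established First quantitative Poincaré lemma by a standard collaring argument, carried out quantitatively. First I would fix, once and for all, a bilipschitz diffeomorphism between a collar neighborhood of $\partial\Delta^n$ in $\Delta^n$ and $\partial\Delta^n \times [0,1]$, with $\partial\Delta^n$ corresponding to $\partial\Delta^n \times \{0\}$; the bilipschitz constant depends only on $n$, and everything below will accrue only dimension-dependent constants. On the collar I would write down an explicit extension of $\alpha_\partial$: pull back $\alpha_\partial$ along the projection $\partial\Delta^n \times [0,1] \to \partial\Delta^n$, multiply by a fixed cutoff $\epsi(s)$ in the collar coordinate $s$ which is $1$ near $s=0$ and $0$ near $s=1$, and call the resulting globally-defined $(k-1)$-form $\beta$ on $\Delta^n$ (extended by zero outside the collar). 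By construction $\beta|_{\partial\Delta^n}=\alpha_\partial$ and $\lVert\beta\rVert_\infty \leq C_n\lVert\alpha_\partial\rVert_\infty$, and $\beta$ vanishes near the "core'' of the simplex.

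Next I would consider the closed form $\omega - d\beta$. It restricts to zero on $\partial\Delta^n$, since $d\beta|_{\partial\Delta^n} = d(\alpha_\partial) = \omega|_{\partial\Delta^n}$, using that exterior derivative commutes with restriction to the boundary face. We also have the norm bound $\lVert\omega - d\beta\rVert_\infty \leq \lVert\omega\rVert_\infty + C_n\lVert\alpha_\partial\rVert_\infty$, where the constant coming from $\lVert d\beta\rVert_\infty$ absorbs $\lVert d\epsi\rVert_\infty$ and the derivative of $\alpha_\partial$ along the boundary directions---but the latter is controlled because $d\alpha_\partial = \omega|_{\partial\Delta^n}$, so no genuine $C^1$ norm of $\alpha_\partial$ is needed, only $\lVert\omega\rVert_\infty$ and $\lVert\alpha_\partial\rVert_\infty$. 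When $n=k$ the hypothesis that the pair satisfies Stokes' theorem together with Stokes applied to $\beta$ forces $\int_{\Delta^n}(\omega - d\beta) = 0$, which is exactly the normalization needed to invoke the First quantitative Poincaré lemma in the top degree. So the First lemma produces $\gamma \in \Omega^{k-1}(\Delta^n,\partial\Delta^n)$ with $d\gamma = \omega - d\beta$ and $\lVert\gamma\rVert_\infty \leq C_{n,k}(\lVert\omega\rVert_\infty + \lVert\alpha_\partial\rVert_\infty)$. Setting $\alpha = \beta + \gamma$ gives $d\alpha = \omega$, the restriction $\alpha|_{\partial\Delta^n} = \alpha_\partial + 0 = \alpha_\partial$, and the desired $L^\infty$ bound after renaming the constant.

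The one place requiring genuine care---and the step I expect to be the main obstacle---is the control on $\lVert d\beta\rVert_\infty$. Naively $d\beta$ involves $d\alpha_\partial$ pulled back to the collar, which is fine since $d\alpha_\partial = \omega|_{\partial\Delta^n}$ is controlled, but it also involves the term $d\epsi(s)\wedge(\text{pullback of }\alpha_\partial)$, and one must make sure that the collar diffeomorphism does not distort the $s$-direction in a way that blows up $\lVert d\epsi\rVert_\infty$ across different boundary faces or near the $(n-2)$-skeleton of $\partial\Delta^n$, where several faces meet. This is handled by choosing the collar to be a genuine metric collar of fixed width $c_n>0$ (possible with dimension-dependent bilipschitz constant because $\Delta^n$ has bounded geometry), so that $\lVert d\epsi\rVert_\infty$ is a dimension-dependent constant. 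With that choice in place the argument goes through verbatim, and one could equally run it on the unit cube as in the proof of the First lemma if that is more convenient for writing down $\beta$ explicitly.
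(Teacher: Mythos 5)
Your proposal is correct and follows essentially the same route as the paper: extend $\alpha_\partial$ to the interior as a cutoff times its pullback along a projection to the boundary, observe that the resulting error form $\omega - d\beta$ vanishes on $\partial\Delta^n$ with norm controlled by $\lVert\omega\rVert_\infty$ and $\lVert\alpha_\partial\rVert_\infty$ (using $d\alpha_\partial = \omega|_{\partial\Delta^n}$ to avoid needing a $C^1$ bound), and then invoke the First quantitative Poincar\'e lemma. The only cosmetic difference is that the paper works directly with a smooth projection $\pi$ from a $1/(2n)$-neighborhood of $\partial\Delta^n$ rather than phrasing it via a product collar, and it leaves the $k=n$ Stokes check implicit; your version makes that check explicit, which is a harmless clarification rather than a new idea.
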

\begin{proof}
  Let $U$ be the $1/(2n)$-neighborhood of $\partial\Delta^n$ in $\Delta^n$, and
  let $\ph:U \to \partial\Delta^n$ be a smooth projection with Lipschitz constant
  $L_\pi$.  Let $\epsilon:\Delta^n \to [0,1]$ be a smooth bump function with
  Lipschitz constant $L_\epsilon$ which is $1$ on $\partial\Delta^n$ and $0$
  outside $U$.  Then $\epsilon\pi^*\alpha_\partial$ is an extension of
  $\alpha_\partial$ to $\Delta^n$ with
  \begin{align*}
    \lVert \epsilon\pi^*\alpha_\partial \rVert_\infty
    &\leq L_\pi^{k-1}\lVert\alpha_\partial\rVert_\infty \\
    \lVert d(\epsilon\pi^*\alpha_\partial) \rVert_\infty &= \lVert d\epsilon \wedge
    \pi^*\alpha_\partial+\epsilon \pi^*d\alpha_\partial\rVert_\infty \leq
    L_\epsilon L_\pi^{k-1}\lVert\alpha_\partial\rVert_\infty
    +L_\pi^k\lVert\omega\rVert_\infty.
  \end{align*}
  Now we apply the previous lemma to $\omega-d(\epsilon\pi^*\alpha_\partial)$ to
  get an $\alpha^\prime \in \Omega^k(\Delta^n,\partial\Delta^n)$ with
  $$\lVert\alpha^\prime\rVert_\infty \leq C_{n,k}\bigl(L_\epsilon L_\pi^{k-1}
  \lVert\alpha_\partial\rVert_\infty+(L_\pi^k+1)\lVert\omega\rVert_\infty\bigr).$$
  The form we are looking for is
  $\alpha=\alpha^\prime+\epsilon\pi^*\alpha_\partial$.
\end{proof}
Finally, we are ready to prove Lemma \ref{lem:IP}.
\begin{proof}
  First, let $w \in C^k(X,A)$ be the simplicial $k$-cochain given by integrating
  $\omega$ over simplices.  By the De Rham theorem, this is a coboundary, and
  since the space of such coboundaries is finite-dimensional, there is an
  isoperimetric constant $c_0(k,X,A)$ and an $a \in C^{k-1}(X,A)$ with
  $\delta a=w$ and
  $$\lVert a \rVert_\infty \leq c_0(k,X,A)\lVert w \rVert_\infty \leq
  c_0(k,X,A)\vol(\Delta^k)\lVert\omega\rVert_\infty.$$
  Now we build a corresponding form $\alpha \in \Omega^k(X,A)$ by skeleta.  On
  the $(k-1)$-skeleton, we take $\alpha=a\ph d\vol$ where $\ph$ is a bump
  function with integral $1$.  We then extend inductively to each higher skeleton
  by the previous lemma.  At each step, the isoperimetric constant is multiplied
  by a constant depending only on the dimension.
\end{proof}

\subsubsection*{Isoperimetric duality}

In this section we show that the optimal isoperimetric constant of Lemma
\ref{lem:IP} is equal to another, better-known isoperimetric constant. In
geometric measure theory, a \emph{$k$-dimensional current} is simply a functional
on the space of smooth differential $k$-forms, with a boundary operator
$\partial$ defined to be dual to the differential.  The \emph{mass} of a current
$T$, which may of course be infinite, is defined by
$\mass(T)=\sup_{\lVert\omega\rVert_\infty=1} T(\omega)$.  Thus the space of currents of
finite mass is dual to $(\Omega^n(X),\lVert\cdot\rVert_\infty)$.  A
\emph{normal current} is a current $T$ such that $T$ and $\partial T$ both have
finite mass; in particular, any current of finite mass which is a cycle is
normal.  The space of normal $k$-currents in $X$ is denoted $\mathbf{N}_k(X)$.
For a simplicial pair $A \subset X$, we also define
$\mathbf{N}_k(X,A)=\mathbf{N}_k(X)/\mathbf{N}_k(A)$, equipped with the quotient
norm.  Then the following is a dual statement to Lemma \ref{lem:IP}:
\begin{lem} \label{lem:curIP}
  Let $A \subset X$ be a finite simplicial pair.  Then there is a constant
  $C(k,X,A)$ such that every normal current $T \in \mathbf{N}_{k-1}(X,A)$ has a
  filling $S$ with $\mass S \leq C\mass T$.
\end{lem}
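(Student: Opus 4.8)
The plan is to deduce Lemma \ref{lem:curIP} from Lemma \ref{lem:IP} by a Hahn--Banach duality argument, essentially reversing the usual passage between isoperimetric inequalities for forms and for currents. First I would set up the relevant normed spaces. Let $Z_{k-1}(X,A) \subset \mathbf{N}_{k-1}(X,A)$ be the subspace of relative cycles, i.e.\ currents $T$ with $\partial T$ supported (mod $A$) on $A$, which by the hypothesis is all of the cycles of finite mass; and observe that a filling $S$ of $T$ exists at all exactly when $[T]=0$ in $H_{k-1}(X,A;\mathbb{R})$, which we may assume (otherwise there is nothing to prove and the statement should be read as applying to boundaries). So it suffices to bound the \emph{filling norm}
$$\Phi(T)=\inf\{\mass S \mid S \in \mathbf{N}_k(X,A),\ \partial S = T\}$$
on the space $B_{k-1}(X,A)$ of relative boundaries by $C\mass T$. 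Since $B_{k-1}(X,A)$ is finite-dimensional (it is a linear subspace of the finite-dimensional simplicial chain space after one notes that normal currents representing the same class differ by a boundary, so the filling norm descends to something computed on simplicial chains --- this needs a small argument), both $\mass$ and $\Phi$ restrict to norms on it, and any two norms on a finite-dimensional space are equivalent; but this cheap argument gives a constant depending a priori on a choice of basis and does not by itself connect to Lemma \ref{lem:IP}, so I would instead do the honest duality computation, which also makes the constant the \emph{same} optimal constant, as the surrounding text claims.

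Here is the duality step, which is the technical heart. The mass norm on $\mathbf{N}_k(X,A)$ is dual, by definition, to the sup-norm on $\Omega^k(X,A)$ (forms vanishing on $A$): $\mass T = \sup\{T(\omega) : \lVert\omega\rVert_\infty \le 1\}$. Dually, the filling norm $\Phi$ on boundaries should be computed as
$$\Phi(T) = \sup\{T(\alpha) : \alpha \in \Omega^{k-1}(X,A),\ \lVert d\alpha\rVert_\infty \le 1\},$$
the point being that for a boundary $T=\partial S$ and any such $\alpha$, $T(\alpha) = S(d\alpha) \le \mass S$, so the sup is $\le \Phi(T)$; the reverse inequality is a Hahn--Banach separation argument, producing an $S$ of small mass from the fact that $T$ annihilates the subspace of exact forms. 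Now Lemma \ref{lem:IP} says exactly that every exact $\omega \in d\Omega^{k-1}(X,A)$ with $\lVert\omega\rVert_\infty \le 1$ is $d\alpha$ for some $\alpha \in \Omega^{k-1}(X,A)$ with $\lVert\alpha\rVert_\infty \le C(k,X,A)$. Feeding this into the formula for $\Phi$: for any boundary $T$,
$$\Phi(T) = \sup_{\lVert d\alpha\rVert_\infty \le 1} T(\alpha) \le \sup_{\substack{\omega = d\alpha,\ \lVert\omega\rVert_\infty \le 1 \\ \lVert\alpha\rVert_\infty \le C}} T(\alpha) \le C \sup_{\lVert\omega\rVert_\infty \le 1} T(\omega) = C\,\mass T,$$
using $T(\alpha) = (\partial S)(\alpha) = S(d\alpha) = S(\omega)$ to rewrite $T$ against $\omega$ rather than $\alpha$, plus rescaling by $\lVert\alpha\rVert_\infty \le C$. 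This gives the lemma with the same constant $C(k,X,A)$.

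The main obstacle I expect is making the Hahn--Banach step rigorous in the non-reflexive, infinite-dimensional setting of currents and smooth forms: one must check that the relevant subspaces are closed in the appropriate (weak-* or quotient) topology so that separation applies, and that the functional produced by Hahn--Banach is genuinely a normal current (finite mass for both $S$ and $\partial S = T$) rather than merely an abstract element of the double dual. The standard fix is to work with the finite-dimensional reductions: push everything down to simplicial cochains/chains as in the proof of Lemma \ref{lem:IP}, where the spaces of coboundaries and boundaries are finite-dimensional and all topologies coincide, establish the duality there, and then transfer back using the quasi-isometry between the De Rham and simplicial pictures that Lemma \ref{lem:IP} itself encodes. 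A secondary technical point, already flagged above, is verifying that the filling norm $\Phi$ descends to and is finite on all of $B_{k-1}(X,A)$ --- equivalently, that every normal relative boundary really does bound a normal current --- which follows because the simplicial chain realizing the homology class can be taken with controlled mass (finite-dimensionality again) and a normal current representing a trivial class differs from it by a boundary of finite mass.
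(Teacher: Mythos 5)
Your strategy is genuinely different from the paper's. The paper proves Lemma \ref{lem:curIP} independently of Lemma \ref{lem:IP}, by the Federer--Fleming deformation argument: inductively push the normal current onto lower skeleta (stopping upon reaching $A$), and once it has been reduced to a simplicial relative boundary, use finite-dimensionality of $B_{k-1}(X,A)$. Only afterwards does the paper establish, via the abstract Theorem \ref{IPD}, that the optimal constants in Lemmas \ref{lem:IP} and \ref{lem:curIP} agree---and note that Theorem \ref{IPD} explicitly \emph{assumes} both constants are finite, so it cannot be used to deduce one lemma from the other (this is precisely what the referee's comment in the paper is about). Your proposal instead derives the current statement from the form statement by a direct Hahn--Banach extension; this is the ``good'' direction of the duality (from the primal space $\Omega^{k-1}$ to its dual), so it does not run into the reflexivity problem that makes the reverse derivation fail, and once Lemma \ref{lem:IP} is in hand it is arguably the shortest route. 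What the paper's route buys in exchange is a geometric proof of the current inequality from first principles, making the coincidence of constants a genuine content of Theorem \ref{IPD} rather than an artifact of which lemma was proved first.

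Two corrections to the write-up. First, well-definedness of $\ell_T(d\alpha)=T(\alpha)$ on $d\Omega^{k-1}(X,A)$ requires that $T$ annihilate all \emph{closed} relative $(k-1)$-forms, not only the exact ones; this is what holds for a boundary $T=\partial S$, since $T(\gamma)=S(d\gamma)=0$ for closed $\gamma$, and it is what you actually need. Second, your final chain of inequalities is circular as written: the expression $\sup_{\lVert\omega\rVert_\infty\le 1}T(\omega)$ is ill-typed ($T$ is a $(k-1)$-current, $\omega$ a $k$-form), and your gloss ``rewrite $T$ against $\omega$ as $S(\omega)$'' reintroduces an arbitrary filling $S$, whose mass is exactly the quantity to be bounded. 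The fix is to dispense with the filling-norm identity entirely: given $\omega=d\alpha$ with $\lVert\omega\rVert_\infty\le 1$, Lemma \ref{lem:IP} supplies $\alpha$ with $\lVert\alpha\rVert_\infty\le C$, whence $\lvert\ell_T(\omega)\rvert=\lvert T(\alpha)\rvert\le \mass T\cdot\lVert\alpha\rVert_\infty\le C\,\mass T$. Thus $\ell_T$ is bounded with norm $\le C\,\mass T$ on $d\Omega^{k-1}(X,A)$, and Hahn--Banach extends it to $S\in(\Omega^k(X,A))^*$ with $\mass S\le C\,\mass T$; since in the paper's setup finite-mass currents \emph{are} elements of this dual space and $\partial S=T$ already has finite mass, $S$ is a normal filling, which resolves the ``technical obstacle'' you flag at the end.
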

This is a version of the Federer--Fleming isoperimetric inequality,
\cite[Thm.~5.5]{FF}.  In their original theorem, Federer and Fleming show that a
$k$-current of mass $T$ in $\mathbb{R}^n$ whose boundary is in the $k$-skeleton
of the unit cubical lattice can be pushed to a linear combination of $k$-cubes of
this lattice through a $(k+1)$-current of mass at most $C_{n,k}\mass T$;
moreover, the resulting cubical $k$-chain has mass at most $C_{n,k}\mass T$ as
well.  Except for the precise constants, their proof can be used to push a
current in a simplicial complex to its $k$-skeleton.  Since it works by
inductively pushing the current onto lower skeleta, it also works for a relative
current (when you reach $A$, stop pushing.)  Finally, once we have deformed our
current to a simplicial boundary in $(X,A)$, it is nullhomologous in a bounded
way simply because the space of simplicial boundaries $B_k(X,A)$ is
finite-dimensional.

The fact that the constants in Lemmas \ref{lem:IP} and \ref{lem:curIP} are equal
is a consequence of the Hahn--Banach theorem.  We can state this in a more
general form:
\begin{thm}[Isoperimetric duality] \label{IPD}
  Let $(V,\lVert\cdot\rVert_V)$ and $(W,\lVert\cdot\rVert_W)$ be normed vector
  spaces and $\ph:V \to W$ a (not necessarily continuous) linear operator.  There
  is a adjoint operator $\ph^*:\Omega \to V^*$ where $\Omega \subseteq W^*$ is
  the space of operators $\omega:W \to \mathbb{R}$ such that $\omega$ and
  $\ph^*\omega$ are both bounded.  Let $C_1$ and $C_2$ be the least constants
  such that:
  \begin{enumerate}
  \item For every $\epsi>0$, every $w \in \img(\ph)$ has a preimage $v$ with
    $\lVert v \rVert_V \leq C_1\lVert w \rVert_W+\epsi$.
  \item For every $\epsi>0$, every $\nu \in \img(\ph^*)$ has a preimage $\omega$
    with $\lVert \omega \rVert_{W^*} \leq C_2\lVert \nu \rVert_{V^*}+\epsi$.
  \end{enumerate}
  If $C_1$ and $C_2$ are both finite, then $C_1=C_2$.
\end{thm}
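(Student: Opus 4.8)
The plan is to prove the two inequalities $C_1 \leq C_2$ and $C_2 \leq C_1$ separately, both via Hahn--Banach. I would set up the problem as follows: since the constants $C_1$ and $C_2$ are defined as infima of valid constants, it suffices to show that for any $C < C_1$ the quantity $C$ fails to work in condition (2), and symmetrically that any $C < C_2$ fails in condition (1). Equivalently, if $C_2 < C_1$ is finite, we derive a contradiction by showing $C_2$ actually works in (1) (and symmetrically). So fix a finite value, say $C_2$, assume condition (2) holds with this constant, and aim to prove condition (1) holds with the same constant; then by symmetry we get equality.

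First I would handle the direction ``condition (2) $\Rightarrow$ condition (1) with the same constant.'' Fix $w \in \img(\ph)$ and $\epsi > 0$; we want a preimage $v$ with $\lVert v \rVert_V \leq C_2 \lVert w \rVert_W + \epsi$. Consider the affine subspace $\ph^{-1}(w) \subseteq V$ and the closed ball $B = \{v : \lVert v \rVert_V \leq C_2\lVert w\rVert_W + \epsi/2\}$. If these are disjoint, the Hahn--Banach separation theorem gives a bounded functional $\nu \in V^*$ and a real $c$ with $\nu(v') \geq c$ for all $v' \in \ph^{-1}(w)$ and $\nu(v) < c$ for all $v \in B$. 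Since $\ph^{-1}(w)$ is a translate of $\ker\ph$, the functional $\nu$ must vanish on $\ker\ph$, hence factors as $\nu = \omega \circ \ph$ for some linear $\omega : \img(\ph) \to \mathbb{R}$; extending $\omega$ arbitrarily (say by zero on a complement) we may assume $\omega \in W^*$ at least as a linear functional, and $\ph^*\omega = \nu$ is bounded, so $\omega \in \Omega$. Normalizing, the separation says $\lVert \nu \rVert_{V^*} \cdot (C_2\lVert w\rVert_W + \epsi/2) \leq c \leq \omega(w)$, while $\lVert\omega\rVert_{W^*} \geq \omega(w)/\lVert w\rVert_W$ gives an upper bound going the other way --- combined with condition (2) applied to $\nu = \ph^*\omega$, which produces a preimage $\omega'$ of $\nu$ with $\lVert\omega'\rVert_{W^*} \leq C_2\lVert\nu\rVert_{V^*} + \epsi'$, we get $\omega(w) = \omega'(w) \leq \lVert\omega'\rVert_{W^*}\lVert w\rVert_W \leq (C_2\lVert\nu\rVert_{V^*} + \epsi')\lVert w\rVert_W$, contradicting the separation bound once $\epsi'$ is small enough. (Here one uses that $\omega$ and $\omega'$ agree on $\img\ph$ since both restrict, via $\ph^*$, to $\nu$ --- so $\omega(w) = \omega'(w)$ as $w \in \img\ph$.) Hence $\ph^{-1}(w) \cap B \neq \emptyset$, giving the desired $v$. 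The reverse direction is completely symmetric, separating the affine set $(\ph^*)^{-1}(\nu)$ from a ball in $\Omega$, using that the predual pairing lets functionals on $\Omega$ be represented by elements of $\img\ph \subseteq W$ (or rather that condition (1) supplies the separating vector).

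The main obstacle, and the point requiring the most care, is the asymmetry between $V$ and $W$: the space $\Omega \subseteq W^*$ is not all of $W^*$, and it is not a dual space with an obvious predual, so in the ``(1) $\Rightarrow$ (2)'' direction one cannot blindly invoke Hahn--Banach separation inside $\Omega^*$. The fix is to run the separation argument in $W^*$ (or in a suitable subspace) and then check that the separating functional, a priori an element of $W^{**}$, can be taken to lie in the image of $W$ --- and crucially that condition (1) is exactly what lets us extract from it an element of $\img\ph$ with controlled norm. One must also be careful that the ``least constant'' in each condition is genuinely attained as an infimum over $\epsi$, so that strict inequalities $C < C_i$ give witnesses violating the respective condition; tracking the $\epsi$'s through the two applications of Hahn--Banach is the routine but essential bookkeeping. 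A final subtlety: one should confirm that $\img(\ph)$ and $\img(\ph^*)$ need not be closed, which is why both conditions are stated with an $\epsi$ slack and why one works with the affine preimage sets directly rather than passing to quotient norms.
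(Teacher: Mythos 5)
Your approach is a genuinely different route from the paper's: where you argue each inequality by contradiction via the separating hyperplane form of Hahn--Banach, the paper instead identifies $C_1$ and $C_2$ as the operator norms of $\ph^{-1}:(\ph(V),\lVert\cdot\rVert_W) \to (V/\ker\ph,\lVert\cdot\rVert_{\inf})$ and $(\ph^*)^{-1}:(\ph^*\Omega,\lVert\cdot\rVert_{V^*}) \to (W^*/\ker\ph^*,\lVert\cdot\rVert_{\inf})$, then uses Hahn--Banach to show these spaces are dual pairs and that $\ph^{-1},(\ph^*)^{-1}$ are adjoints, whence equal norms. Your version is more hands-on; the paper's is shorter once the duality of the four normed spaces is set up, and it makes the role of each hypothesis more visible.

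However, there is a genuine gap in your ``(2) $\Rightarrow$ (1)'' direction, at the step where you extend the factored functional to an element of $\Omega$. You write: ``extending $\omega$ arbitrarily (say by zero on a complement) we may assume $\omega \in W^*$ at least as a linear functional, and $\ph^*\omega = \nu$ is bounded, so $\omega \in \Omega$.'' This does not work: extension by zero on an algebraic complement need not produce a bounded functional, and $\Omega$ by definition requires \emph{both} $\omega$ and $\ph^*\omega$ bounded. What your argument actually needs is that $\nu \in \img(\ph^*)$, i.e.\ that the functional $\omega_0 : \img\ph \to \mathbb{R}$ determined by $\nu = \omega_0 \circ \ph$ extends to a \emph{bounded} functional on $W$. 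By Hahn--Banach this is equivalent to $\omega_0$ being bounded on $(\img\ph, \lVert\cdot\rVert_W)$. That boundedness does not come for free: one has $\lvert\omega_0(w)\rvert \leq \lVert\nu\rVert_{V^*}\cdot\inf\{\lVert v\rVert_V : \ph(v)=w\}$, and it is precisely the finiteness of $C_1$ that gives $\inf\{\lVert v\rVert_V : \ph(v)=w\} \leq C_1\lVert w\rVert_W$ and hence $\lVert\omega_0\rVert \leq C_1\lVert\nu\rVert_{V^*}$. So the hypothesis $C_1 < \infty$ must be invoked here, in \emph{this} direction, not just in the reverse one where you flag it; and the extension must be via Hahn--Banach, not by an arbitrary complement. (The paper's proof hits the same point and flags it explicitly: ``one needs to invoke (1) to show the duality.'') Once you make these two corrections --- bound $\omega_0$ using finiteness of $C_1$, then Hahn--Banach extend --- the contradiction goes through, and in fact you can dispense with $\omega$ entirely and work only with $\omega_0$ and the $\omega'$ produced by condition (2), since all you use is that they agree on $\img\ph$.

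A minor additional point: for the strict separation of $\ph^{-1}(w)$ from a ball to produce a \emph{continuous} functional, you should separate from the \emph{open} ball (an open convex set) rather than the closed one, since neither set is compact in general; this is a small adjustment to the radius bookkeeping.
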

I would like to thank the referee for pointing out the need to assume the
finiteness of $C_1$ and $C_2$.
\begin{proof}
  Consider the bounded operators
  $$\ph^{-1}:(\ph(V),\lVert\cdot\rVert_W) \to (V/\ker \ph,
  \lVert\cdot\rVert_{\inf}),$$
  where $\displaystyle\lVert \bar v \rVert_{\inf}=
  \inf_{v \in \bar v} \lVert v \rVert_V$, and
  $$(\ph^*)^{-1}:(\ph^*\Omega, \lVert\cdot\rVert_{V^*}) \to
  (W^*/\ker\ph^*, \lVert\cdot\rVert_{\inf}),$$
  where $\lVert\bar\omega\rVert_{\inf}=
  \displaystyle\inf_{\omega \in \bar\omega} \lVert\omega\rVert_{W^*}$.

  Here, $\ph^{-1}$ is a bounded isomorphism of vector spaces, but not necessarily
  a bilipschitz equivalence; $(\ph^*)^{-1}$ is injective but its image
  $\Omega/\ker\ph^* \subseteq W^*/\ker\ph^*$ is not necessarily the whole space.
  Then $C_1$ and $C_2$ are the operator norms of $\ph^{-1}$ and $(\ph^*)^{-1}$.  It
  is therefore enough to prove that $\ph^{-1}$ and $(\ph^*)^{-1}$ are adjoint
  operators on dual normed vector spaces and so have the same norm.

  First, any $\bar\omega \in W^*/\ker\ph^*$ gives a well-defined operator on
  $\ph(V)$: if $\bar\omega=\overline{\omega^\prime}$, then
  $\ph^*(\omega-\omega^\prime)=0$ and so $\langle\omega,\ph(v)\rangle=
  \langle\omega^\prime,\ph(v)\rangle$.  Conversely, by the Hahn--Banach theorem,
  any functional $\omega_0:\ph(V) \to \mathbb{R}$ which is continuous with
  respect to $\lVert\cdot\rVert_W$ has an extension to $W^*$.  Thus
  $(W^*/\ker\ph^*,\lVert\cdot\rVert_{\inf})$ is the dual normed space to
  $(\ph(V),\lVert\cdot\rVert_W)$.  A similar argument holds for the other pair,
  though one needs to invoke (1) to show the duality.  Finally, it is clear that
  $$\langle\nu,\ph^{-1}(w)\rangle=\langle(\ph^*)^{-1}\nu,w\rangle.$$
  This completes the proof.
\end{proof}

\section{Homotopy theory of DGAs} \label{S:Obs}

In this section we sketch out the homotopy theory of differential graded
algebras, following the treatment of \cite[Ch.~IX and X]{GrMo}.  The relatively
explicit formulation helps us obtain quantitative bounds on the sizes of DGA
homotopies, which we will later harness to obtain various geometric bounds.  We
also review Gromov's arguments bounding the homotopy classes of maps with a given
Lipschitz constant.

A \emph{(commutative) differential graded algebra} (DGA) will always denote a
cochain complex of $\mathbb{Q}$- or $\mathbb{R}$-vector spaces equipped with a
graded commutative multiplication which satisfies the (graded) Leibniz rule.  The
prototypical example of an $\mathbb{R}$-DGA is the algebra of smooth forms on a
manifold or piecewise smooth forms on a simplicial complex.

The cohomology of a DGA is the cohomology of the underlying cochain complex.  The
relative cohomology of a DGA homomorphism $\ph:\mathcal{A} \to \mathcal{B}$ is
the cohomology of the cochain complex
$$C^n(\ph)=\mathcal{A}^n \oplus \mathcal{B}^{n-1}$$
with the differential given by $d(a,b)=(da,\ph(a)-db)$.  This cohomology fits, as
expected, into an exact sequence involving $H^*(\mathcal{A})$ and
$H^*(\mathcal{B})$.

Given a coefficient vector space $V$, $H^*(\mathcal{A},V)$ is the cohomology of
the cochain complex $\Hom(V,\mathcal{A}^n)$.  By the universal coefficient
theorem, this is naturally isomorphic to $\Hom(V,H^*(\mathcal{A}))$, but we will
frequently be using the cochain complex itself.

A \emph{weak equivalence} between DGAs $\mathcal{A}$ and $\mathcal{B}$ is a
homomorphism $\mathcal{A} \to \mathcal{B}$ which induces an isomorphism on
cohomology.

An algebra $\mathcal{A}$ is \emph{simply connected} if $\tilde H^0(\mathcal{A})=
H^1(\mathcal{A})=0$.  If $\mathcal{A}$ is simply connected and of
\emph{finite type} (i.e.\ it has finite-dimensional cohomology in every degree)
then it has a \emph{minimal model}: a weak equivalence
$m_{\mathcal{A}}:\mathcal{M_A} \to \mathcal{A}$ where $\mathcal{M_A}$ is freely
generated as an algebra by finite-dimensional vector spaces $V_n$ in degree $n$
(we write
$$\mathcal{M_A}=\bigwedge_{n=2}^\infty V_n)$$
and the differential satisfies
$$dV_n \subseteq \bigwedge_{k=2}^{n-1} V_k.$$
In other words, $\mathcal{M_A}$ can be built up via a sequence of
\emph{elementary extensions} (sometimes called \emph{Hirsch extensions})
$$\mathcal{M_A}(n+1)=\mathcal{M_A}(n)\langle V_{n+1} \rangle,$$
with the differential on $\mathcal{M_A}(n+1)$ extending that on
$\mathcal{M_A}(n)$, starting with $\mathcal{M_A}(1)=\mathbb{Q}$ or $\mathbb{R}$.
We refer to elements of the $V_n$ as \emph{indecomposables}.  We will often
describe finitely generated free DGAs by indicating the degree of generators as
superscripts in parentheses: $a^{(3)}$ means that $a$ is a generator in degree 3.

In particular, if $Y$ is a simply connected manifold or simplicial complex, the
algebra of forms $\Omega^*Y$ has a minimal model which we will call
$m_Y:\mathcal{M}_Y^* \to \Omega^*Y$.  This models the Postnikov tower of $Y$:
each $V_n \cong \Hom(\pi_n(Y),\mathbb{R})$ and the differential on $V_n$ is dual
to the $k$-invariant of the fibration $Y_{(n)} \to Y_{(n-1)}$.  This can be shown
inductively via obstruction theory.

\subsection{Obstruction theory}

Given a principal fibration $K(\pi,n) \to E \xrightarrow{p} B$ and a space $X$,
obstruction theory gives an exact sequence of sets
$$H^n(X;\pi) \to [X,E] \xrightarrow{p_*} [X,B] \xrightarrow{\mathcal{O}}
H^{n+1}(X;\pi),$$
in the sense that $\img p_*=\mathcal{O}^{-1}(0)$ and $H^n(X;\pi)$ acts on $[X,E]$
via an action whose orbits are exactly the preimages of classes in $[X,B]$.
Moreover, if $B$ is simply connected (or more generally, $\pi_1(B)$ acts
homotopically trivially on the fiber) then over a given map $f:X \to B$, there is
an exact sequence of groups
$$\cdots \to H^{n-1}(X;\pi) \to \pi_1(E^X,\tilde f) \to \pi_1(B^X,f)
\to H^n(X;\pi) \to p_*^{-1}([f]) \to 0,$$
where $p_*^{-1}([f])$, the set of homotopy classes of maps lifting $f$, is a
torsor acted on by $H^n(X;\pi)$ and $\tilde f$ is any lift of $f$.

We now give DGA versions of these statements.  First define homotopy of DGA
homomorphisms as follows: $f,g:\mathcal{A} \to \mathcal{B}$ are homotopic if
there is a homomorphism
$$H:\mathcal{A} \to \mathcal{B}\otimes\mathbb{R}\langle t^{(0)},dt^{(1)} \rangle$$
such that $H|_{\substack{t=0\\dt=0}}=f$ and $H|_{\substack{t=1\\dt=0}}=g$.  We think of
$\mathbb{R}\langle t,dt \rangle$ as an algebraic model for the unit interval and
this notion as an abstraction of the map induced by an ordinary smooth homotopy.
In particular, it defines an equivalence relation \cite[Cor.~10.7]{GrMo}.
Moreover, for any piecewise smooth space $X$ there is a map
$$\rho:\Omega^*X \otimes \mathbb{R}\langle t,dt \rangle \to
\Omega^*(X \times [0,1])$$
given by ``realizing'' this interval, that is, interpreting the $t$ and $dt$ the
way one would as forms on the interval.  We will use this realization map further
in the paper.

We also introduce some notation which is useful for constructing homotopies
between DGA homomorphisms.  For any DGA $\mathcal A$, define an operator
$\int_0^t:\mathcal A \otimes \mathbb{R}\langle t,dt \rangle \to
\mathcal A \otimes \mathbb{R}\langle t,dt \rangle$ by
$${\textstyle\int_0^t a \otimes t^i}=0, {\textstyle\int_0^t a \otimes t^idt}=
(-1)^{\deg a}a \otimes \frac{t^{i+1}}{i+1}$$
and an operator $\int_0^1:\mathcal A \otimes \mathbb{R}\langle t,dt \rangle \to
\mathcal A$ by
$${\textstyle\int_0^1 a \otimes t^i}=0,{\textstyle\int_0^1 a \otimes t^idt}=
(-1)^{\deg a}\frac{a}{i+1}.$$
These provide a formal analogue of fiberwise integration; in particular, they
satisfy the identities
\begin{align}
  d\bigl({\textstyle\int_0^t} u\bigr)+{\textstyle\int_0^t} du &=
  u-u|_{\substack{t=0\\dt=0}} \otimes 1 \label{eqn:I0t} \\
  d\bigl({\textstyle\int_0^1} u\bigr)+{\textstyle\int_0^1} du &=
  u|_{\substack{t=1\\dt=0}}-u|_{\substack{t=0\\dt=0}}. \label{eqn:I01}
\end{align}

Now we state the main lemma of obstruction theory, which states the conditions
under which a map can be extended over an elementary extension.
\begin{prop}[10.4 in \cite{GrMo}] \label{extExist}
  Let $\mathcal{A}\langle V \rangle$ be an $n$-dimensional elementary extension
  of a DGA $\mathcal{A}$.  Suppose we have a diagram of DGAs
  $$\xymatrix{
    \mathcal{A} \ar[r]^f \ar@{^{(}->}[d] & \mathcal{B} \ar[d]^h \\
    \mathcal{A}\langle V \rangle \ar[r]^g & \mathcal{C}
  }$$
  with $g|_{\mathcal{A}} \simeq hf$ by a homotopy $H:\mathcal{A} \to \mathcal{C}
  \otimes \mathbb{R}\langle t,dt \rangle$.  Then the map
  $O:V \to \mathcal{B}^{n+1} \oplus \mathcal{C}^n$ given by
  $$O(v)=\left(f(dv), g(v)+{\textstyle\int_0^1 H(dv)}\right)$$
  defines an obstruction class $[O] \in H^{n+1}(h:\mathcal{B} \to \mathcal{C};V)$
  to producing an extension $\tilde f:\mathcal{A}\langle V \rangle \to
  \mathcal{B}$ of $f$ with $g \simeq h \circ \tilde f$ via a homotopy $\tilde H$
  extending $H$.
\end{prop}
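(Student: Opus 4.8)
The plan is to check by a direct computation with the formal primitives $\int_0^t,\int_0^1$ and the identities \eqref{eqn:I0t}--\eqref{eqn:I01} that $O$ is a cocycle, and then to establish that $[O]$ vanishes exactly when the desired extension exists, in each direction by writing down an explicit coordinatewise formula. Throughout I will use that $\mathcal A\langle V\rangle$ is free over $\mathcal A$ on the degree-$n$ space $V$, so that a homomorphism out of it extending one on $\mathcal A$ is determined by an arbitrary choice of images of $V$ compatible with the differential, and that $f,g,H$ being linear makes $O$ linear in $v$.

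\emph{$O$ is a cocycle.} Since $V$ sits in degree $n$, $O$ takes values in $\mathcal B^{n+1}\oplus\mathcal C^{n}=C^{n+1}(h)$, and from $d(b,c)=(db,h(b)-dc)$ we get
$$
(dO)(v)=\bigl(df(dv),\; hf(dv)-dg(v)-d{\textstyle\int_0^1}H(dv)\bigr).
$$
The first coordinate vanishes since $d^2=0$. For the second, apply \eqref{eqn:I01} to $u=H(dv)$: since $H$ is a chain map and $d(dv)=0$ we have $dH(dv)=0$, so $d\int_0^1 H(dv)=H(dv)|_{t=1}-H(dv)|_{t=0}$; because $H|_{t=0}=g|_{\mathcal A}$ (a chain map) and $H|_{t=1}=hf$, this equals $hf(dv)-dg(v)$, and the second coordinate vanishes. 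Thus $[O]\in H^{n+1}(h;V)$ is defined.

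\emph{$[O]=0$ produces the extension, and conversely.} If $O=d(\beta,\gamma)$ with linear $\beta\colon V\to\mathcal B^{n}$, $\gamma\colon V\to\mathcal C^{n-1}$, then comparing coordinates gives $f(dv)=d\beta(v)$ and $g(v)+\int_0^1 H(dv)=h\beta(v)-d\gamma(v)$. Let $\tilde f$ agree with $f$ on $\mathcal A$ and send $v\mapsto\beta(v)$; the first relation is precisely $d\tilde f(v)=\tilde f(dv)$, so $\tilde f$ is a DGA homomorphism extending $f$. Let $\tilde H$ agree with $H$ on $\mathcal A$ and set
$$
\tilde H(v)=g(v)\otimes 1+{\textstyle\int_0^t}H(dv)+d\bigl(\gamma(v)\otimes t\bigr).
$$
Using \eqref{eqn:I0t} and $dH(dv)=0$ one checks $d\tilde H(v)=H(dv)=\tilde H(dv)$, so $\tilde H$ is a homomorphism; evaluating at $t=0$ returns $g(v)$, and evaluating at $t=1$ returns $g(v)+\int_0^1 H(dv)+d\gamma(v)=h\beta(v)=h\tilde f(v)$, so $\tilde H$ is a homotopy from $g$ to $h\tilde f$ restricting to $H$. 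For the converse, given such $\tilde f,\tilde H$, put $\beta(v)=\tilde f(v)$ and $\gamma(v)=\int_0^1\tilde H(v)$; then $d\beta(v)=\tilde f(dv)=f(dv)$, and \eqref{eqn:I01} applied to $u=\tilde H(v)$, using $d\tilde H(v)=H(dv)$, $\tilde H(v)|_{t=0}=g(v)$, $\tilde H(v)|_{t=1}=h\tilde f(v)$, gives $g(v)+\int_0^1 H(dv)=h\beta(v)-d\gamma(v)$, i.e.\ $O=d(\beta,\gamma)$.

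I expect the only real difficulty to be bookkeeping: fixing the endpoint convention for $H$ (that $g|_{\mathcal A}$ sits at $t=0$ and $hf$ at $t=1$, which is what makes the ``$+\int_0^1$'' in the formula for $O$ correct), and keeping the signs in $\int_0^t$, $\int_0^1$ and in $d(\gamma(v)\otimes t)$ mutually consistent, so that the correction term $d(\gamma(v)\otimes t)$ shifts the $t=1$ endpoint of $\tilde H(v)$ by exactly $d\gamma(v)$ while leaving the $t=0$ endpoint and the relation $d\tilde H(v)=\tilde H(dv)$ undisturbed. Nothing here is conceptually deep; the content is entirely in the two identities above and the freeness of the elementary extension.
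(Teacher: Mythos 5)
Your proof is correct and follows the same approach the paper indicates: the cocycle check is a direct application of \eqref{eqn:I01}, and your formula for $\tilde H$ when $O=d(\beta,\gamma)$ is precisely the paper's \eqref{extHtpy} (with $c=\gamma$, $b=\beta$). The paper cites \cite{GrMo} for the full argument and records only the explicit extension formula; you supply the omitted cocycle verification and the converse, both of which are the standard Griffiths--Morgan computation.
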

When the obstruction vanishes, there are maps
$(b,c): V \to \mathcal{B}^n \oplus \mathcal{C}^{n-1}$ such that $d(b,c)=O$, i.e.
\begin{align*}
  db(v) &= f(dv) \\
  dc(v) &= h\circ b(v)-g(v)-{\textstyle\int_0^1 H(dv)}.
\end{align*}
Then for $v \in V$ we can set $\tilde f(v)=b(v)$ and
\begin{equation} \label{extHtpy}
  \tilde H(v)=g(v)+d(c(v) \otimes t)+{\textstyle\int_0^t H(dv)}.
\end{equation}
This gives a specific formula for the extension.

This lemma has a relative analogue which is also quite useful.
\begin{prop}[10.5 in \cite{GrMo}] \label{extRel}
  Let $\mathcal{A}\langle V \rangle$ be an $n$-dimensional elementary extension
  of a DGA $\mathcal{A}$.  Suppose we have a diagram of DGAs
  $$\xymatrix{
    \mathcal{A} \ar[r]^f \ar@{^{(}->}[d] & \mathcal{B} \ar[d]^h \ar[r]^\mu &
    \mathcal{D} \\
    \mathcal{A}\langle V \rangle \ar[r]^g & \mathcal{C} \ar[ru]^\nu
  }$$
  where
  \begin{enumerate}
  \item $g|_{\mathcal{A}} \simeq hf$ by a homotopy $H:\mathcal{A} \to \mathcal{C}
    \otimes \mathbb{R}\langle t,dt \rangle$ such that $\nu \circ H$ is constant,
  \item $\mu$ is surjective,
  \item $\nu \circ h=\mu$ on the nose, and
  \item $\mu \circ f=\nu \circ g|_{\mathcal{A}}$ on the nose.
  \end{enumerate}
  Then the map $O:V \to \mathcal{B}^{n+1} \oplus \mathcal{C}^n$ given by
  $$O(v)=\bigl(f(dv), g(v)+{\textstyle\int_0^1 H(dv)}\bigr)$$
  defines an obstruction class $[O] \in H^{n+1}(h:\mathcal{B} \to \mathcal{C};V)$
  to producing an extension $\tilde f:\mathcal{A}\langle V \rangle \to
  \mathcal{B}$ of $f$ with $g \simeq h \circ \tilde f$ via a homotopy $\tilde H$
  extending $H$, where $\nu \circ \tilde H$ is constant
  (i.e.\ $\nu \circ \tilde H=(\mu \circ \tilde f) \otimes 1$.)
\end{prop}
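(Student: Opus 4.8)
**

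The plan is to reduce Proposition \ref{extRel} to the absolute version, Proposition \ref{extExist}, which we have already stated. The key point is that the hypotheses (1)--(4) are exactly what is needed to guarantee that the obstruction class $[O]$, a priori living in $H^{n+1}(h;V)$, lifts to the relative obstruction group $H^{n+1}(\mu\circ h \colon \mathcal B \to \mathcal D \,;V)$ attached to the composite $\mu$, or equivalently that $O$ lands in the subcomplex of $C^{n+1}(h;V) = \Hom(V,\mathcal B^{n+1})\oplus\Hom(V,\mathcal C^n)$ that maps to zero under $\nu$ on the second coordinate. So first I would unwind what $O$ looks like after applying $\nu$: we have $\nu(g(v)) = \mu(f(v))\cdot(\text{sign stuff})$ by hypothesis (4), well, more precisely $\nu g|_{\mathcal A} = \mu f$ applies to elements of $\mathcal A$, but $v\in V$ is new, so this does not directly apply to $g(v)$; instead we use that $\nu\circ H$ is constant (hypothesis (1)) to compute $\nu(\int_0^1 H(dv)) = \int_0^1 (\nu\circ H)(dv)$ and show this vanishes on the interval variable, then combine.

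More concretely, the second component of $O(v)$ is $g(v) + \int_0^1 H(dv) \in \mathcal C^n$, and I would apply $\nu$ to it. Since $\nu\circ H\colon \mathcal A \to \mathcal D\otimes\mathbb R\langle t,dt\rangle$ is constant, it equals $(\nu\circ hf)\otimes 1 = (\mu\circ f)\otimes 1$ using hypotheses (3) and (4) to identify the endpoints; a constant homotopy has no $dt$ part, so $\int_0^1$ of it is $0$. Hence $\nu(g(v) + \int_0^1 H(dv)) = \nu(g(v))$. Now hypothesis (3), $\nu\circ h = \mu$, combined with the fact that $\mu$ is surjective (hypothesis (2)), lets us transport the absolute obstruction-theory construction. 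Specifically, I would run Proposition \ref{extExist} to get maps $(b,c)\colon V \to \mathcal B^n\oplus\mathcal C^{n-1}$ with $db(v)=f(dv)$ and $dc(v) = h(b(v)) - g(v) - \int_0^1 H(dv)$, producing $\tilde f(v)=b(v)$ and the homotopy $\tilde H$ via formula \eqref{extHtpy}. The remaining task is to verify that this $\tilde H$ satisfies the extra constraint $\nu\circ\tilde H = (\mu\circ\tilde f)\otimes 1$. Applying $\nu$ to \eqref{extHtpy} gives $\nu(g(v)) + d(\nu(c(v))\otimes t) + \int_0^t (\nu\circ H)(dv)$; the last term vanishes since $\nu\circ H$ is constant, and $\nu(g(v)) = \mu(f(v))$-type expression, while $\nu(h(b(v))) = \mu(b(v)) = \mu(\tilde f(v))$ by hypothesis (3). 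So I need $\nu(c(v))$ to be chosen to vanish — and this is exactly where surjectivity of $\mu$ is used: one chooses $c$ within its allowed freedom (the obstruction construction determines $(b,c)$ only up to cocycles) so that $\nu\circ c = 0$, which is possible precisely because the relevant relative cochain complex surjects appropriately.

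The main obstacle I expect is bookkeeping the sign conventions and the precise identification of $\nu\circ H$ with the constant homotopy on the correct endpoint, together with checking that the freedom in choosing the primitive $(b,c)$ for $O$ is genuinely enough to kill $\nu\circ c$ — i.e., that one can solve $dc(v) = h(b(v)) - g(v) - \int_0^1 H(dv)$ with $c$ taking values in $\ker\nu \subseteq \mathcal C^{n-1}$. For this I would observe that, after applying $\nu$, the right-hand side becomes a coboundary in $\mathcal D$ in a way compatible with hypotheses (3)--(4), so the obstruction to finding $c \in \Hom(V,\ker\nu)$ vanishes; this is essentially the statement that $H^{n+1}(h;V) \to H^{n+1}(\mu h;V)$ is the relevant map and $[O]$ is in the kernel. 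Once $\tilde f$ exists with $g\simeq h\tilde f$ rel the constraint, the ``obstruction class'' formulation follows by the standard argument that changing the primitive changes $\tilde f$ by something measured in $H^n(h;V)$, so the class $[O]$ alone decides existence. I would present the proof as: (a) verify $[O]$ is well-defined and $\nu$-compatible, (b) invoke Proposition \ref{extExist} with the refinement that $c$ can be taken in $\ker\nu$, (c) check \eqref{extHtpy} then satisfies $\nu\circ\tilde H = (\mu\circ\tilde f)\otimes 1$.
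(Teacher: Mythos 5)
Your proposal matches the paper's (very terse) treatment: the paper gives no proof of its own, citing \cite{GrMo} and appending only the remark that one may choose $(b,c)$ so that $\mu\circ b=\nu\circ g|_V$ and $\nu\circ c=0$, after which $\tilde H$ is again given by \eqref{extHtpy}. Your proposal is essentially a correct fleshing-out of that remark, reducing to Prop.~\ref{extExist} and then adjusting the primitive.

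Two points worth fixing. First, your opening claim that $O$ lands in the subcomplex of $C^{n+1}(h;V)$ on which $\nu$ vanishes in the second coordinate is wrong---as you then correctly compute, $\nu(O_2(v))=\nu(g(v))$, which need not vanish; you quietly abandon this and pivot to modifying the primitive, which is the right move. Second, the modification you leave as a sketch can be made explicit and is the crux of the argument: given any $(b,c)$ with $d(b,c)=O$, use surjectivity of $\mu$ to lift $\nu\circ c$ to some $c_0:V\to\mathcal{B}^{n-1}$, and replace $(b,c)$ by $(b',c')=(b-dc_0,\,c-hc_0)$. The correction $(-dc_0,-hc_0)$ is a cocycle in the mapping cone, so $(b',c')$ is still a primitive of $O$, and $\nu(c')=\nu(c)-\nu(hc_0)=\nu(c)-\mu(c_0)=0$ by hypothesis (3). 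The companion condition $\mu\circ b'=\nu\circ g|_V$ is then automatic: applying $\nu$ to $dc'(v)=h(b'(v))-g(v)-\int_0^1 H(dv)$ and using that $\nu\circ H$ is constant (so its $\int_0^1$ vanishes) gives $0=\mu(b'(v))-\nu(g(v))$. Plugging $(b',c')$ into \eqref{extHtpy} then yields $\nu\circ\tilde H(v)=\nu(g(v))\otimes 1=\mu(\tilde f(v))\otimes 1$, since the $d(\nu(c'(v))\otimes t)$ and $\int_0^t(\nu\circ H)(dv)$ terms both vanish.
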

More specifically, in this case we can define $b$ and $c$ as above so that
$\mu \circ b=\nu \circ g|_V$ and $\nu \circ c=0$; then $\tilde H$ is again
defined via \eqref{extHtpy}.

A special case of Prop.~\ref{extRel} gives rise to the following result:
\begin{prop} \label{prop:extBoth}
  Let $\mathcal{A}\langle V \rangle$ be an $n$-dimensional elementary extension
  of a DGA $\mathcal{A}$.  Suppose we have maps
  $$\mathcal{A}\langle V \rangle \xrightarrow{\ph,\psi} \mathcal{M}
  \xrightarrow{\mu} \mathcal{N}$$
  with $\mu$ surjective, together with a homotopy
  $\Phi:\mathcal{A} \to \mathcal{M} \otimes \langle t,dt \rangle$ between
  $\ph|_{\mathcal{A}}$ and $\psi|_{\mathcal{A}}$ and a homotopy
  $\chi:\mathcal{A}\langle V \rangle \to \mathcal{N}\langle t,dt \rangle$ between
  $\mu \circ \ph$ and $\mu \circ \psi$ which extends $\mu \circ \Phi$.  Then the
  obstruction in $H^n(\mu:\mathcal{M} \to \mathcal{N};V)$ to producing a homotopy
  $$\tilde \Phi:\mathcal{A}\langle V \rangle \to \mathcal{M} \otimes
  \langle t,dt \rangle$$
  which extends $\Phi$ and lifts $\chi$ is given by
  $O(v)=\bigl(\psi(v)-\ph(v)-\int_0^1 \Phi(dv),\int_0^1 \chi(v)\bigr)$.
\end{prop}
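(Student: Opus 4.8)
The strategy is to reduce Proposition~\ref{prop:extBoth} to the relative obstruction lemma Proposition~\ref{extRel} by a reinterpretation of ``homotopy of DGA homomorphisms'' as an extension problem one dimension higher. The key move is to view a homotopy $\mathcal{A}\langle V \rangle \to \mathcal{M} \otimes \langle t,dt \rangle$ extending $\Phi$ as a single DGA homomorphism out of $\mathcal{A}\langle V \rangle$ into the DGA $\mathcal{M} \otimes \langle t,dt \rangle$, subject to the boundary constraints that at $t=0$ it equals $\ph$ and at $t=1$ it equals $\psi$. So I would set up Proposition~\ref{extRel} with $\mathcal{B} = \mathcal{M} \otimes \langle t,dt \rangle$, $\mathcal{C} = \mathcal{N} \otimes \langle t,dt \rangle$, $h = \mu \otimes \id$, $\mathcal{D} = (\mathcal{M} \oplus \mathcal{M}) $ --- or rather, to encode the endpoint conditions, take $\mathcal{D} = \mathcal{M} \times \mathcal{M}$ with $\mu_{\mathcal{D}}: \mathcal{M}\otimes\langle t,dt\rangle \to \mathcal{M}\times\mathcal{M}$ the restriction-to-endpoints map $u \mapsto (u|_{t=0,dt=0}, u|_{t=1,dt=0})$, which is surjective. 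The ``$f$'' of Proposition~\ref{extRel} becomes $\Phi: \mathcal{A} \to \mathcal{M}\otimes\langle t,dt\rangle$, the ``$g$'' becomes $\chi: \mathcal{A}\langle V\rangle \to \mathcal{N}\otimes\langle t,dt\rangle$, and the fact that $\chi$ extends $\mu\circ\Phi$ supplies the ``$H$'' (which here can be taken \emph{constant} in the $\mathcal{A}$-direction, since $\chi|_{\mathcal{A}} = \mu\circ\Phi$ on the nose, not merely up to homotopy).

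First I would carefully check that the hypotheses (1)--(4) of Proposition~\ref{extRel} are satisfied in this setup: surjectivity of the endpoint map is clear; $\nu \circ h = \mu_{\mathcal{D}}'$ where $\nu$ is the analogous endpoint map on $\mathcal{N}\otimes\langle t,dt\rangle$, and this holds on the nose because $\mu$ commutes with setting $t=0$ or $t=1$; the compatibility $\mu \circ f = \nu \circ g|_{\mathcal{A}}$ translates to $\mu_{\mathcal{D}} \circ \Phi = \nu \circ \chi|_{\mathcal{A}}$, which holds since $\chi$ extends $\mu\circ\Phi$; and the homotopy $H$ being constant (with $\nu\circ H$ constant) is automatic when $\chi|_{\mathcal{A}} = \mu\circ\Phi$ exactly. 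Then Proposition~\ref{extRel} produces an obstruction class in $H^{n+1}(h : \mathcal{B}\to\mathcal{C}; V)$, and I would identify this group with $H^n(\mu:\mathcal{M}\to\mathcal{N};V)$ --- the degree shift by one coming precisely from tensoring with $\langle t,dt\rangle$, exactly as the relative cohomology of $\mathcal{A}\otimes\langle t,dt\rangle \to \mathcal{A}$ realizes a suspension. Finally I would trace the formula $O(v) = (f(dv), g(v) + \int_0^1 H(dv))$ from Proposition~\ref{extRel} through these identifications: with $H$ constant, $\int_0^1 H(dv) = 0$ in the relevant sense, so $O(v)$ reduces to the pair $\bigl(\Phi(dv) \text{-data}, \chi(v)\bigr)$, and unwinding the endpoint bookkeeping --- $f(dv)$ being a form in $\mathcal{M}\otimes\langle t,dt\rangle$ whose obstruction-theoretic content is captured by the difference of endpoints together with a correction term --- should yield exactly $O(v) = \bigl(\psi(v) - \ph(v) - \int_0^1\Phi(dv), \int_0^1\chi(v)\bigr)$.

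\textbf{Main obstacle.} The routine part is verifying hypotheses (1)--(4); the delicate part is the bookkeeping in the last step --- matching the abstract obstruction cocycle of Proposition~\ref{extRel}, which lives in the relative complex of $\mu\otimes\id : \mathcal{M}\otimes\langle t,dt\rangle \to \mathcal{N}\otimes\langle t,dt\rangle$ relative to the endpoint data, against the concrete formula with $\psi(v) - \ph(v) - \int_0^1\Phi(dv)$ in the $\mathcal{M}$-slot and $\int_0^1\chi(v)$ in the $\mathcal{N}$-slot. One has to be scrupulous with the definition of the operators $\int_0^1$ and $\int_0^t$ from \eqref{eqn:I0t}--\eqref{eqn:I01}, with the sign conventions $(-1)^{\deg a}$, and with exactly which ``restriction'' ($t=0$ versus $t=1$) contributes with which sign; the identity \eqref{eqn:I01} is what converts $\int_0^1 d(\text{something})$ into a difference of endpoint values, and that is the computational heart of the matching. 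I expect the cleanest route is to \emph{not} literally invoke Proposition~\ref{extRel} as a black box but to run its proof (the formulas for $b$, $c$, and $\tilde H$ via \eqref{extHtpy}) in this special case, where many terms vanish because $H$ is constant, and simply read off that the resulting obstruction is the claimed $O(v)$ --- at which point the statement follows with essentially no further work.
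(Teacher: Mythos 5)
Your reduction to Prop.~\ref{extRel} with $\mathcal{B}=\mathcal{M}\otimes\langle t,dt\rangle$ and $f=\Phi$ is the right idea — that is exactly what the paper does — but the choices you make for $\mathcal{C}$ and $\mathcal{D}$ are incompatible with the hypotheses of that lemma, so the argument as set up cannot run. You take $\mathcal{C}=\mathcal{N}\otimes\langle t,dt\rangle$, $h=\mu\otimes\id$, and $\mathcal{D}=\mathcal{M}\times\mathcal{M}$ with $\mathcal{B}\to\mathcal{D}$ the endpoint map. Hypothesis (3) of Prop.~\ref{extRel} then demands a DGA map $\nu:\mathcal{C}\to\mathcal{D}$ with $\nu\circ h$ equal on the nose to the endpoint map $\mathcal{M}\otimes\langle t,dt\rangle\to\mathcal{M}\times\mathcal{M}$. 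Such a $\nu$ cannot exist once $\ker\mu\ne 0$: for $0\ne m\in\ker\mu$ the element $b=m\otimes 1$ has $h(b)=0$, which forces $\nu(h(b))=0$, yet the endpoint map sends $b$ to $(m,m)\ne 0$. The two constraints — endpoints pinned in $\mathcal{M}$, interior lifted over $\mathcal{N}$ — cannot be carried by two separate algebras; they must be \emph{glued} into one. The paper's fix is to take $\mathcal{C}=\mathcal{D}=\mathcal{M}\otimes\langle t,dt\rangle\big/\bigl(\ker\mu\otimes\langle t(1-t),dt\rangle\bigr)$, the fiber product of the endpoint map with $\mu\otimes\id$, and correspondingly to define $g$ not as $\chi$ but as the explicit gluing $\ph\otimes(1-t)+\psi\otimes t+\chi-(\chi|_{t=0}\otimes(1-t)+\chi|_{t=1}\otimes t)$, which restricts to $\ph,\psi$ at the ends and to $\chi$ in $\mathcal{N}$.

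The final ``bookkeeping'' step also contains more content than you suggest. The raw obstruction cocycle $\hat O(v)=(\Phi(dv),g(v))$ delivered by Prop.~\ref{extRel} is not yet the claimed $O(v)$: it must first be shifted by the coboundary $d\bigl(\ph|_V\otimes(1-t)+\psi|_V\otimes t,\,0\bigr)$ so that the $\mathcal{B}$-component vanishes at $t=0,1$, and then shifted again using \eqref{eqn:I0t}--\eqref{eqn:I01} to a representative supported on $dt$-terms, at which point the formal integrals collapse to $\int_0^1\chi(v)$ and $-\psi(v)+\ph(v)+\int_0^1\Phi(dv)$. Your observation that ``many terms vanish because $H$ is constant'' is true as far as it goes, but it does not by itself carry $\hat O$ to $O$; the two coboundary shifts are where the work is.
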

\begin{proof}
  We apply Prop.~\ref{extRel} using
  \begin{align*}
    \mathcal{B} &= \mathcal{M} \otimes \langle t,dt \rangle; \quad f=\Phi \\
    \mathcal{C}=\mathcal{D} &= \mathcal{M} \otimes \langle t,dt \rangle/
                  \ker \mu \otimes \langle t(1-t),dt\rangle \\
    g &= \ph \otimes (1-t)+\psi \otimes t+\chi
    -(\chi|_{t=0} \otimes (1-t)+\chi_{t=1} \otimes t).
  \end{align*}
  Thus we obtain an obstruction cocycle
  $\hat O:V \to \mathcal{B}^{n+1} \oplus \mathcal{C}^n$ given by
  $$\hat O(v)=(\Phi(dv),g(v)).$$
  We can get a cocycle $(p,q)$ which is cohomologous to $\hat O$ and satisfies
  $p|_{t=0}=p|_{t=1}=0$ and
  $$q=\bigl(\chi-\chi|_{t=0} \otimes (1-t)-\chi|_{t=1} \otimes t\bigr)|_V$$
  by subtracting off
  $$d(\ph|_V \otimes (1-t)+\psi|_V \otimes t,0).$$
  Finally, by \eqref{eqn:I0t} and \eqref{eqn:I01}, $(p,q)$ is cohomologous to
  $(-1)^n\bigl(({\textstyle \int_0^1 p}) \otimes dt,
  -({\textstyle \int_0^1 q}) \otimes dt\bigr)$ via
  $$d\bigl(-{\textstyle \int_0^t p}+({\textstyle \int_0^1 p}) \otimes t,
  {\textstyle \int_0^t q}-({\textstyle \int_0^1 q}) \otimes t\bigr).$$
  It is easy to see that $\int_0^1 q(v)=\int_0^1 \chi(v)$ and
  $${\textstyle \int_0^1 p(v)}=-\psi(v)+\ph(v)+{\textstyle \int_0^1 \Phi(dv)}.$$
  This obstruction is zero if and only if $O \in H^n(\mu;V)$ is zero. 
\end{proof}
Prop.~\ref{prop:extHtpy} will give a quantitative version.

Finally, we give the DGA version of the exact sequence of groups; in this, unlike
the previous lemmas, the domain algebra must be minimal.
\begin{prop}
  Let $\mathcal{A}\langle V \rangle$ be an $n$-dimensional elementary extension
  of a minimal DGA $\mathcal{A}$.  Then for any DGA $\mathcal{B}$ and map
  $\ph:\mathcal{A} \to \mathcal{B}$ which has an extension
  $\tilde \ph:\mathcal{A}\langle V \rangle \to \mathcal{B}$, there is an exact
  sequence of groups (and a torsor)
  $$[\mathcal{A}\langle V \rangle,\mathcal{B} \otimes
    \mathbb{R}\langle e^{(1)} \rangle]_{\tilde\ph} \to [\mathcal{A},
    \mathcal{B} \otimes \mathbb{R}\langle e^{(1)} \rangle]_\ph
  \xrightarrow{\mathcal{O}} H^n(\mathcal{B};V) \to
  \left\{\begin{array}{c}\text{elements of }
  [\mathcal{A}\langle V \rangle,\mathcal{B}]\\
  \text{which extend }\ph\end{array}\right\}
  \to 0.$$
  Moreover, the group structure on the first two sets is given as follows.  Their
  elements are given by representatives of the form $\ph+\eta \otimes e$, where
  $\eta:\mathcal{A}^* \to \mathcal{B}^{*-1}$ (respectively
  $\mathcal{A}\langle V \rangle^* \to \mathcal{B}^{*-1}$) satisfies the identities
  $d\eta=\eta d$ and
  \begin{equation} \label{Leibniz}
    \eta(uv)=(-1)^{\deg v}\eta(u)\ph(v)+\ph(u)\eta(v).
  \end{equation}
  Then given two elements $\ph+\eta_1$ and $\ph+\eta_2$, their sum is given by
  $$(\ph+\eta_1 \otimes e)\boxplus(\ph+\eta_2 \otimes e)=\ph+(\eta_1+\eta_2)
  \otimes e.$$
  The arrow $\mathcal{O}$ is given by
  $$\ph+\eta \otimes e \mapsto \eta d|_V:V \to \mathcal{B}^n.$$
\end{prop}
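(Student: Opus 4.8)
The plan is to prove the several assertions — the exact sequence, the abelian group structure with its explicit law $\boxplus$, and the formula for $\mathcal{O}$ — by making the target $\mathcal{B}\otimes\mathbb{R}\langle e^{(1)}\rangle$ completely explicit and then feeding the resulting linear algebra into the obstruction‑theoretic Propositions \ref{extExist}, \ref{extRel}, and \ref{prop:extBoth}. First I would record the dictionary for maps into $\mathcal{B}\otimes\mathbb{R}\langle e^{(1)}\rangle$: since $e$ is odd and we work over $\mathbb{R}$ we have $e^2=0$, and $de=0$, so for any DGA $\mathcal{C}$ a homomorphism $\Psi:\mathcal{C}\to\mathcal{B}\otimes\mathbb{R}\langle e^{(1)}\rangle$ is precisely a pair $\Psi=\Psi_0\otimes 1+\eta\otimes e$ with $\Psi_0:\mathcal{C}\to\mathcal{B}$ a homomorphism and $\eta:\mathcal{C}^*\to\mathcal{B}^{*-1}$ a linear map satisfying $d\eta=\eta d$ and the Leibniz identity \eqref{Leibniz} with respect to $\Psi_0$; the condition that $\Psi$ lie over $\ph$ says exactly $\Psi_0=\ph$. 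Applied to $\mathcal{C}=\mathcal{A}$ and $\mathcal{C}=\mathcal{A}\langle V\rangle$ this identifies lifts of $\ph$ (resp. of $\tilde\ph$) \emph{on the nose} with such $\ph$‑derivations (resp. $\tilde\ph$‑derivations restricting to them on $\mathcal{A}$); the first arrow of the sequence is restriction $\eta\mapsto\eta|_{\mathcal{A}}$, and addition of derivations is the operation $\boxplus$.

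Next I would pass to homotopy classes. By the same dictionary applied with $\mathcal{B}\otimes\mathbb{R}\langle t,dt\rangle$ in place of $\mathcal{B}$, a homotopy over $\mathcal{B}$ between $\ph+\eta_0\otimes e$ and $\ph+\eta_1\otimes e$ is a path of $\ph$‑derivations; applying the operator $\int_0^1$ and identities \eqref{eqn:I0t}--\eqref{eqn:I01} produces a degree $-2$ $\ph$‑derivation $\zeta$ with $\eta_1-\eta_0=d\zeta\pm\zeta d$, and conversely any such $\zeta$ yields a homotopy. Hence $[\mathcal{C},\mathcal{B}\otimes\mathbb{R}\langle e^{(1)}\rangle]_\ph$ is the degree $-1$ cohomology of the complex of $\ph$‑derivations $\mathcal{C}\to\mathcal{B}$; since $\mathcal{C}$ (either $\mathcal{A}$ or $\mathcal{A}\langle V\rangle$) is free, this complex is $\Hom$ of the indecomposables of $\mathcal{C}$ into $\mathcal{B}$ with a twisted differential, in particular a genuine $\mathbb{R}$‑vector space on which $\boxplus$ is visibly the group law, so both well‑definedness of $\boxplus$ on classes and commutativity are immediate. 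The only essential use of minimality of $\mathcal{A}$ is here — via the standard fact (\cite[Ch.~X]{GrMo}) that homotopy of homomorphisms out of a minimal DGA is an equivalence relation compatible with these manipulations — together with the identification invoked in the last step below.

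Then I would analyze $\mathcal{O}$. Extending a lift $\ph+\eta\otimes e$ over the elementary extension $\mathcal{A}\langle V\rangle$ to a lift of $\tilde\ph$ amounts to choosing, for each $v\in V$, an element of $(\mathcal{B}\otimes\mathbb{R}\langle e\rangle)^n$ whose differential is $\ph(dv)+\eta(dv)\otimes e$; since $dv\in\mathcal{A}$ and $\mathcal{A}\langle V\rangle$ is free over $\mathcal{A}$ on $V$, the $1$‑component and the $e$‑component of this condition decouple — the $1$‑component is solved once and for all by $v\mapsto\tilde\ph(v)$, and the $e$‑component asks for $\tilde\eta(v)\in\mathcal{B}^{n-1}$ with $d\tilde\eta(v)=\eta(dv)$. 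The cochain $\eta d|_V\in\Hom(V,\mathcal{B}^n)$ is a cocycle because $d\eta=\eta d$, and it is a coboundary exactly when such a $\tilde\eta|_V$ exists; I would therefore set $\mathcal{O}(\ph+\eta\otimes e)=[\eta d|_V]\in H^n(\mathcal{B};V)$. This is additive in $\eta$, hence a group homomorphism; it depends only on the homotopy class (replacing $\eta$ by $\eta+d\zeta\pm\zeta d$ changes $\eta d|_V$ by the coboundary $d(\zeta d|_V)$); and the displayed decoupling gives immediately that $\ph+\eta\otimes e$ lies in the image of the restriction map iff $\mathcal{O}(\ph+\eta\otimes e)=0$ — exactness at the middle term. (Conversely a lift $\tilde\ph+\tilde\eta\otimes e$ restricts to $\ph+(\tilde\eta|_{\mathcal{A}})\otimes e$, whose $\mathcal{O}$ is $[d(\tilde\eta|_V)]=0$.)

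Finally I would treat the torsor end. Every extension of $\ph$ is represented by a homomorphism agreeing with $\ph$ on $\mathcal{A}$, and two such differ on $V$ by a cocycle $V\to\mathcal{B}^n$; this gives a transitive action of $H^n(\mathcal{B};V)$ on $\{\text{extensions of }\ph\}$ (``twist $\tilde\ph$ on $V$ by a cocycle''), whence the surjectivity ``$\to 0$''. It remains to identify the stabilizer of $[\tilde\ph]$. A cocycle $z$ fixes $[\tilde\ph]$ iff $\tilde\ph$ and its twist $z\cdot\tilde\ph$ are homotopic as homomorphisms $\mathcal{A}\langle V\rangle\to\mathcal{B}$; by Proposition \ref{prop:extBoth} (taken with $\mathcal{N}=0$) such a homotopy is exactly an extension over $\mathcal{A}\langle V\rangle$ of a self‑homotopy $\Phi$ of $\ph$, and the obstruction to extending a given $\Phi$ is represented by the cocycle $z-\int_0^1\Phi(d{-})|_V$. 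Under the identification of self‑homotopies of $\ph$ with $[\mathcal{A},\mathcal{B}\otimes\mathbb{R}\langle e^{(1)}\rangle]_\ph$ — a self‑homotopy $\Phi$ corresponding to the class $\ph+\eta\otimes e$ with $\eta=\pm\int_0^1\Phi$, so that $\int_0^1\Phi(d{-})|_V=\pm\,\eta d|_V$ — this obstruction vanishes for some choice of $\Phi$ iff $[z]\in\img\mathcal{O}$, which is exactness at $H^n(\mathcal{B};V)$. The main obstacle, and the point at which minimality of $\mathcal{A}$ genuinely enters, is precisely this last identification of self‑homotopies of $\ph$ with $[\mathcal{A},\mathcal{B}\otimes\mathbb{R}\langle e^{(1)}\rangle]_\ph$, compatibly with $\mathcal{O}$: it is the algebraic shadow of the statement that a loop in a mapping space moves a chosen lift by its holonomy, and it is where one must know that $\mathbb{R}\langle t,dt\rangle$ with its two endpoints really models the circle as far as $[\mathcal{A},-]$ is concerned. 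A secondary nuisance is keeping the signs in the derivation complex and in the operators $\int_0^t,\int_0^1$ mutually consistent, which I would pin down once at the outset.
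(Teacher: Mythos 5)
Your decomposition is correct in broad outline and is actually more detailed than the paper's own proof, which essentially delegates: exactness at the third and fourth terms is cited to \cite[Prop.~14.4]{GrMo}, and exactness at the second term is indicated as a corollary of Prop.~\ref{extRel}, proved ``similarly to Prop.~\ref{prop:extBoth}.'' Your dictionary between lifts of $\ph$ and $\ph$-derivations, the identification of homotopy classes of such lifts with cohomology of the derivation complex, the reading of $\mathcal{O}$ as $[\eta d|_V]$, and the decoupling argument for exactness at the second term are all sound and match what the cited references do.

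However, there is a genuine flaw in the step where you identify self-homotopies of $\ph$ with elements of $[\mathcal{A},\mathcal{B}\otimes\mathbb{R}\langle e^{(1)}\rangle]_\ph$ via the formula $\eta=\pm\int_0^1\Phi$. That formula does not define a $\ph$-derivation: $\int_0^1$ does not satisfy the Leibniz identity \eqref{Leibniz} against products, because $\int_0^1(\Phi(u)\Phi(v))$ picks up cross terms in which the $dt$-component comes from $\Phi(u)$ and a nonconstant $t^i$-component comes from $\Phi(v)$ (and vice versa), and these do not assemble into $(-1)^{\deg v}(\int_0^1\Phi(u))\ph(v)+\ph(u)(\int_0^1\Phi(v))$. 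So as stated the correspondence is not well-defined, and this is exactly the point you flag yourself as ``the main obstacle.'' The repair is: a self-homotopy $\Phi$ corresponds to the class $[\ph+\eta\otimes e]$ for which the \emph{special} homotopy $\ph\otimes 1+\eta\otimes dt$ is DGA-homotopic rel endpoints to $\Phi$; for that special homotopy the identity $\int_0^1(\ph\otimes 1+\eta\otimes dt)(dv)=(-1)^{n-1}\eta(dv)$ does hold, and the quantity $[\int_0^1\Phi(d{-})|_V]$ is invariant under homotopy rel endpoints by Prop.~\ref{prop:extBoth}, so the cohomology-level version of your identity follows. What remains is to show that every self-homotopy of $\ph$ \emph{has} such a special representative; this is the place where one genuinely needs minimality of $\mathcal{A}$ and a lifting argument through a DGA model of the circle (as done in the paper's proof of Theorem~\ref{thm:htpy} via Prop.~\ref{extRel}, with the quasi-isomorphism $\mathbb{R}\langle e\rangle\to$ the appropriate sub-DGA of $\langle t,dt\rangle$). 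Without that lemma your exactness at $H^n(\mathcal{B};V)$ is not complete.
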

Exactness at the third and fourth term are given in \cite[Prop.~14.4]{GrMo}.
Exactness at the second term can be proven using Prop.~\ref{extRel}, similarly to
Prop.~\ref{prop:extBoth}.

\subsection{Quantitative aspects}

In this subsection, $X$ will be a finite piecewise Riemannian simplicial complex
and $Y$ a compact simply-connected Riemannian manifold with boundary with minimal
model $m_Y:\mathcal{M}_Y^* \to \Omega^*Y$.  The technical results of this paper
largely concern homomorphisms $\ph:\mathcal{M}_Y^* \to \Omega^*X$ for such $X$
and $Y$.  We would like to define a notion of size on such homomorphisms.  Given
a simplexwise Riemannian metric on $X$, we equip each $\Omega^kX$ with the
$L^\infty$-norm; we also fix a norm on each of the vector spaces $V_k$ of degree
$k$ indecomposables of $\mathcal{M}_Y^*$.  Since the $V_k$ are
finite-dimensional, the choice of this norm affects anything that depends on
finitely many of them only up to a constant.  Given this data, we define the
\emph{(formal) dilatation} of $\ph$ by
$$\Dil(\ph)=
\max_{k \in \{2,\ldots,\dim X\}} \lVert \ph|_{V_k} \rVert_{\mathrm{op}}^{1/k}.$$
Note that if $f:X \to Y$ is an $L$-Lipschitz map, then $f^*$ multiplies the
$L^\infty$ norm of $k$-forms by at most $L^k$.  Therefore when $\ph=f^*m_Y$ for
some map $f:X \to Y$,
$$\Dil(\ph) \leq C\Lip f,$$
where $C$ depends only on $m_Y$ and the norms on the $V_k$.

We define the dilatation of a homotopy via the realization map $\rho$.  Since we
often want to scale the time interval independently of $X$, we define a whole
family
$$\Dil_\tau(\Phi)=\Dil(\rho_\tau\Phi)$$
where $\rho_\tau:\Omega^*X \otimes \mathbb{R}\langle t,dt \rangle \to
\Omega^*(X \times [0,\tau])$ sends $t \mapsto t/\tau$.  One can then think of
$\tau$ as the ``length'' of the formal interval.


We will frequently want to apply the obstruction lemmas in such a way that we can
say something quantitative about the extension.  We give here a couple of
specialized instances in which we can do this.

\begin{prop} \label{prop:extHtpy}
  Suppose that $\Phi_k:\mathcal{M}_Y^*(k) \to \Omega^*X \otimes
  \mathbb{R}\langle t,dt \rangle$ is a partially defined homotopy between
  $\ph,\psi:\mathcal{M}_Y^* \to \Omega^*X$.
  \begin{enumerate}[label={(\roman*)}]
  \item \label{num:obst} The obstruction to extending $\Phi_k$ to a homotopy
    $$\Phi_{k+1}:\mathcal{M}_Y^*(k+1) \to \Omega^*X \otimes
    \mathbb{R}\langle t,dt \rangle$$
    is a class in $H^{k+1}(X;V_{k+1})$ represented by a cochain $\sigma$ with (for
    any $\tau>0$)
    $$\lVert \sigma \rVert_{\mathrm{op}} \leq
    \tau C(k,d|_{V_{k+1}})\Dil_\tau(\Phi_k)^{k+2}
    +\Dil(\ph)^{k+1}+\Dil(\psi)^{k+1}.$$
  \item \label{num:ext}
    If this obstruction class vanishes, then we can choose $\Phi_{k+1}$ so that
    $$\bigl\lVert(\Phi_{k+1})_i^j|_{V_{k+1}}\bigr\lVert_{\mathrm{op}} \leq
    (C_{\mathrm{IP}}+2)\bigl(\tau C(k,d|_{V_{k+1}})
    \Dil_\tau(\Phi_k)^{k+2}+\Dil(\ph)^{k+1}+\Dil(\psi)^{k+1}\bigr),$$
    where $C_{\mathrm{IP}}$ is the isoperimetric constant for $(k+2)$-forms in $X$
    and $\tau>0$ is arbitrary.
  \end{enumerate}
  Moreover, if for some subcomplex $A \subset X$ we have an existing homotopy
  $$\chi:\mathcal{M}_Y^* \to \Omega^*A \otimes \mathbb{R}\langle t,dt \rangle$$
  between $\ph|_A$ and $\psi|_A$, then if the obstruction from
  Prop.~\ref{prop:extBoth} vanishes, we can get an extension with similar bounds,
  using a relative isoperimetric constant and with an additional
  $O(\tau\Dil_\tau(\chi))$ term.
\end{prop}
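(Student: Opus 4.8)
The plan is to apply the obstruction lemmas of Propositions \ref{extExist} and \ref{prop:extBoth} directly, and then run the quantitative antidifferentiation lemma (Lemma \ref{lem:IP}) on the resulting obstruction cochain to control the size of the extension. First I would recall that extending $\Phi_k$ over the elementary extension $\mathcal{M}_Y^*(k+1)=\mathcal{M}_Y^*(k)\langle V_{k+1}\rangle$ is exactly the situation of Prop.~\ref{extExist} with $\mathcal{B}=\mathcal{C}=\Omega^*X\otimes\mathbb{R}\langle t,dt\rangle$, $h=\id$, the two maps $\ph,\psi$ playing the role of the boundary data at $t=0$ and $t=1$, and $\Phi_k$ the homotopy $H$; the obstruction cocycle is $\sigma(v)=\psi(v)-\ph(v)-\int_0^1\Phi_k(dv)$, viewed as an element of $C^{k+1}(X;V_{k+1})$ after passing through the realization map $\rho$. (Strictly, since we want a homotopy extending $\Phi_k$ rather than a plain extension, I would invoke Prop.~\ref{prop:extBoth} in the absolute case $\mu=\id$, which gives precisely this $O(v)$.) The $L^\infty$ bound on $\sigma$ is then a matter of bookkeeping: $dv\in\bigwedge_{j\le k}V_j$, so $\ph(dv)$ and $\psi(dv)$ have norm controlled by products of $\lVert\ph|_{V_j}\rVert_{\mathrm{op}}\le\Dil(\ph)^j$ and similarly for $\psi$, while $\Phi_k(dv)$ is built from $(\rho_\tau\Phi_k)$ applied to a degree-$(k+1)$ element, hence its norm is at most $C(k,d|_{V_{k+1}})\Dil_\tau(\Phi_k)^{k+2}$ after realizing the interval of length $\tau$; the fiberwise integral $\int_0^1$ costs the factor of $\tau$. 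Collecting terms gives the stated bound in \ref{num:obst}.

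For part \ref{num:ext}, once $[\sigma]=0$ in $H^{k+1}(X;V_{k+1})$, I would apply Lemma \ref{lem:IP} (in its cochain-level form, as used in the proof of that lemma, or rather its simplicial-cochain shadow combined with the Poincar\'e lemmas) coordinatewise in $V_{k+1}$ to produce a primitive with $L^\infty$-norm at most $C_{\mathrm{IP}}\lVert\sigma\rVert_{\mathrm{op}}$; here $C_{\mathrm{IP}}$ is the isoperimetric constant for $(k+2)$-forms on $X$. Feeding $b,c$ with $d(b,c)=O$ into the explicit extension formula \eqref{extHtpy}, namely $\tilde H(v)=g(v)+d(c(v)\otimes t)+\int_0^t H(dv)$, and tracking the norms of each summand (the $g(v)$ term contributes $\Dil(\ph)^{k+1}+\Dil(\psi)^{k+1}$ type terms, the $d(c\otimes t)$ term contributes $c$ and $dc$, and $\int_0^t H(dv)$ the $\tau\Dil_\tau(\Phi_k)^{k+2}$ term), one obtains the claimed factor $(C_{\mathrm{IP}}+2)$ times the obstruction bound. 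The arbitrariness of $\tau>0$ is preserved throughout because every step is compatible with rescaling the formal interval via $\rho_\tau$.

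For the relative statement, I would instead invoke Prop.~\ref{prop:extBoth} in full generality, with $\mathcal{N}=\Omega^*A\otimes\mathbb{R}\langle t,dt\rangle$, $\mathcal{M}=\Omega^*X\otimes\mathbb{R}\langle t,dt\rangle$, $\mu$ the (surjective) restriction map, $\Phi=\Phi_k$, and $\chi$ the given homotopy on $A$. The obstruction now lives in $H^{k+1}$ of the relative complex $(\mu:\mathcal{M}\to\mathcal{N};V_{k+1})$, i.e.\ in $H^{k+1}(X,A;V_{k+1})$, and is represented by the pair $\bigl(\psi(v)-\ph(v)-\int_0^1\Phi_k(dv),\ \int_0^1\chi(v)\bigr)$; the second coordinate is what contributes the extra $O(\tau\Dil_\tau(\chi))$ term. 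When this class vanishes I would apply the \emph{relative} version of Lemma \ref{lem:IP} (which is exactly what that lemma provides, since it is stated for a simplicial pair $A\subset X$ and forms vanishing on $A$) to antidifferentiate, and then use the relative extension formula from the discussion following Prop.~\ref{extRel}. The main obstacle I anticipate is purely organizational rather than conceptual: carefully matching the algebraic obstruction-theory formulas, which are phrased in the formal DGA $\Omega^*X\otimes\mathbb{R}\langle t,dt\rangle$, with the genuine forms on $X\times[0,\tau]$ via $\rho_\tau$, so that the $L^\infty$-norms of all the intermediate terms $\int_0^t$, $\int_0^1$, and the $dt$-wedges are bounded with the correct powers of $\tau$ and of $\Dil_\tau(\Phi_k)$, $\Dil(\ph)$, $\Dil(\psi)$, $\Dil_\tau(\chi)$ — and doing so uniformly in $\tau$. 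Everything else is a direct citation of the already-established obstruction lemmas and the isoperimetric inequality.
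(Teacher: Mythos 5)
Your approach is essentially identical to the paper's: part (i) is Prop.~\ref{prop:extBoth} applied in the absolute case with $\sigma(v)=\psi(v)-\ph(v)-\int_0^1\Phi_k(dv)$, part (ii) antidifferentiates $\sigma$ via Lemma~\ref{lem:IP} and plugs the primitive into the extension formula $\Phi_{k+1}(v)=\ph(v)+d(c(v)\otimes t)+\int_0^t\Phi_k(dv)$, and the relative case uses Prop.~\ref{prop:extBoth} with $\mu$ the restriction to $\Omega^*A$ together with the relative version of Lemma~\ref{lem:IP}. One small slip worth fixing: in the bookkeeping for (i) you attribute the $\Dil(\ph)^{k+1}+\Dil(\psi)^{k+1}$ terms to $\ph(dv)$ and $\psi(dv)$, but these come from the $\psi(v)-\ph(v)$ part of $\sigma$, evaluated on $v\in V_{k+1}$ of degree $k+1$; the $dv$ contribution (degree $k+2$) only enters through $\int_0^1\Phi_k(dv)$, which is where the $\tau\Dil_\tau(\Phi_k)^{k+2}$ term and the factor of $\tau$ come from. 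You also leave the relative extension a little telegraphic: the paper realizes the needed $c$ with $c|_A=0$ by first extending $\chi$ to a bump-function form $\hat\chi$ on a collar of $A$, solving $dc=\psi-\ph-\int_0^1\Phi_k(dv)-d\int_0^1\hat\chi(v)$ with $c$ supported off $A$, and then setting $\Phi_{k+1}(v)=\ph(v)+d(c(v)\otimes t)+\int_0^t\Phi_k(dv)+d\int_0^t\hat\chi(v)$; this is the concrete mechanism that produces both the correct restriction to $A$ and the extra $O(\tau\Dil_\tau(\chi))$ term, and it would be worth making explicit.
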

\begin{proof}
  We start with the absolute case, which is simpler.  We let
  $\sigma(v)=\psi(v)-\ph(v)-\int_0^1 \Phi_k(dv)$.  This clearly satisfies the
  given bound; by Prop.~\ref{prop:extBoth} it is the obstruction to extending
  $\Phi_k$.

  Now, suppose the obstruction class vanishes.  Then we can choose
  $c:V_{k+1} \to \Omega^*X$ such that $dc=\sigma$ and $\lVert c \rVert_\infty \leq
  C_{\mathrm{IP}}\lVert\sigma\rVert_{\mathrm{op}}$ and set
  $$\Phi_{k+1}(v)=\ph(v)+d(c(v) \otimes t)+{\textstyle \int_0^t \Phi_k(dv)},$$
  similar to \eqref{extHtpy}.  This satisfies the bound in (ii).

  We now tackle the relative case.  We must again choose $\Phi_{k+1}$ so that
  $d\Phi_{k+1}(v)=\Phi(dv)$, but this time we also need to satisfy the condition
  $\Phi_{k+1}(v)|_A=\chi(v)$.  Let $\pi:\hat A \times [0,1] \to A$ be the
  piecewise linear deformation retraction of a neighborhood of $A$ in $X$ to $A$,
  and choose a bump function $\epsi:X \to [0,1]$ which is $1$ on $A$ and
  supported on $\hat A$.  Finally, let
  $$\hat\chi(v)=\left\{\begin{array}{l l}
  \epsi\pi_1^*\chi(v) & \text{at points in }\hat A \\
  0 & \text{outside }\hat A.\end{array}\right.$$
  Then if the obstruction
  $$\bigl(\psi-\ph-{\textstyle\int_0^1\Phi_k(dv)},
  {\textstyle\int_0^1\chi(v)}\bigr)$$
  vanishes, we can choose $c:V_{k+1} \to \Omega^*X$ supported on $X \setminus A$
  and with the right isoperimetric bounds such that
  $$dc=\psi-\ph-{\textstyle \int_0^1\Phi_k(dv)}
  -d{\textstyle\int_0^1\hat\chi(v)},$$
  and then set
  $$\Phi_{k+1}(v)=\ph(v)+d(c(v) \otimes t)+{\textstyle\int_0^t \Phi_k(dv)}
  +d{\textstyle \int_0^t \hat\chi(v)}.$$
  This is the extension we are looking for.
\end{proof}
In the specific instances we consider, we can often obtain better bounds.
Suppose that $\ph$ and $\psi$ both have dilatation $\leq L$, and that we can
construct a homotopy $\Phi$ between them formally up to degree $n$ without
encountering any nonzero obstructions that make extendability dependent on
choices made in lower degree.  Write $\Phi_i^j$ for the $t^i(dt)^j$-term of
$\Phi$.  We claim that the homotopy can be built so that for $k \leq n$ and for
some constants $C(k,X,Y)$ depending on the norms on the $V_k$,
\begin{equation} \label{eqn:better}
  \lVert \Phi_i^j|_{V_k} \rVert_{\mathrm{op}} \leq C(k,X,Y)L^{2k-2}\text{, for }j=0,1.
\end{equation}
Clearly this is true for $k=2$, since degree 2 indecomposables have zero
differential.  Now suppose it's true up to $k-1$.  Then for $a \in dV_k$,
$\Phi(a)$ is a sum of some number of terms (depending on $a$) each with
$\infty$-norm bounded by
$$\prod_{r_1+\cdots+r_\ell=k+1} C(k-1,X,Y)L^{2r_i-2} \leq C(k-1,X,Y)^\ell L^{2k-2}.$$
Since $V_k$ is finite-dimensional, this gives us a $C(k,X,Y)$ which depends on
$C(k-1,X,Y)$ as well as the algebraic structure of the differentials.

Moreover, the largest power of $t$ present is bounded only as a function of $k$,
as is clear from the construction.  In particular, we end up with
$\Dil_1(\Phi) \lesssim L^{\frac{2n-2}{n}}$ and $\Dil_{L^{-2}}(\Phi) \lesssim L^2$.

A second quantitative lemma concerns formal (that is, algebraic) concatenation of
homotopies.  The proof of \cite[Cor.~10.7]{GrMo} shows in a formal way that DGA
homotopy is a transitive relation.  We reproduce this proof with quantitative
bounds on the size of the concatenation.
\begin{prop} \label{prop:cat}
  Suppose $\ph,\psi,\xi:\mathcal{M}_Y^* \to \Omega^*X$ are homomorphisms and we
  are given homotopies $\Phi$ between $\ph$ and $\psi$ and $\Psi$ between $\psi$
  and $\xi$.  Then we can find a homotopy
  $$\Xi:\mathcal{M}_Y^* \to \Omega^*X \otimes \mathbb{R}\langle t,dt \rangle$$
  between $\ph$ and $\xi$ such that for any $L$ satisfying
  $\Dil_{L^{-1}}(\Phi) \leq L$ and $\Dil_{L^{-1}}(\Psi) \leq L$,
  $$\Dil_{L^{-1}}(\Xi) \leq C(Y,\dim X)L,$$
  and moreover $\Dil(\int_0^1\Xi) \leq \Dil(\int_0^1\Phi)+\Dil(\int_0^1\Psi)$.
\end{prop}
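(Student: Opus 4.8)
The plan is to produce $\Xi$ as the literal geometric concatenation of $\Phi$ and $\Psi$ — a reparametrized copy of $\Phi$ followed by a reparametrized copy of $\Psi$ — rather than through the obstruction-theoretic transitivity argument of \cite[Cor.~10.7]{GrMo}. The point of this choice is that every term in sight is then a pullback of $\Phi$ or $\Psi$ along an affine reparametrization of the interval, so no isoperimetric constant ever intervenes and the dilatation of $\Xi$ is controlled by the (bounded) Lipschitz constants of the reparametrizations; this is what yields a \emph{linear}, not merely polynomial, bound.

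Concretely, I subdivide the formal interval at its midpoint, so that $\Xi$ is allowed to land in $\Omega^*X$ tensored with the (piecewise polynomial) de Rham algebra of the subdivided interval; equivalently, after realization, in the simplexwise smooth forms on $X \times [0,1]$ with $X \times \{1/2\}$ a subcomplex. This is harmless: the realizations $\rho_\tau$, the operators $\int_0^t,\int_0^1$, and the functionals $\Dil_\tau$ all retain their meaning on each half, and since $\Dil_\tau$ is measured after realization the subdivision does not affect any estimate. Let $\sigma_1\colon t \mapsto 2t$ identify the first half with the whole formal interval and $\sigma_2\colon t\mapsto 2t-1$ the second, and write $\sigma_i^*$ for the induced maps on the interval tensor factor. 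Set $\Xi$ equal to $\sigma_1^*\circ\Phi$ over the first half and to $\sigma_2^*\circ\Psi$ over the second. The two pieces agree at $t=1/2$, because $\Phi|_{t=1}=\psi=\Psi|_{t=0}$, so they glue to a single homomorphism; it is a DGA homomorphism since on each half it is the pullback of one (namely $\Phi$ or $\Psi$) and the two agree on the overlap, and its restrictions to $t=0$ and $t=1$ are $\ph$ and $\xi$. Hence $\Xi$ is a homotopy from $\ph$ to $\xi$.

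For the quantitative claims I use two elementary facts. First, over the first half $\rho_{L^{-1}}\Xi$ is exactly $\rho_{(2L)^{-1}}\Phi$ (viewed on $X\times[0,(2L)^{-1}]$), and over the second half it is a translate of $\rho_{(2L)^{-1}}\Psi$; and halving the length of the formal interval leaves the $t^i$-coefficients of a homotopy untouched while multiplying the $t^i\,dt$-coefficients by $2$, so $\Dil_{(2L)^{-1}} \le \sqrt2\,\Dil_{L^{-1}}$ on each of $\Phi,\Psi$. Therefore $\Dil_{L^{-1}}(\Xi)\le\sqrt2\,L$, which is of the asserted form (the constant $C(Y,\dim X)$ absorbing the finitary dependence on the fixed norms on the $V_k$). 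Second, an affine reparametrization of the interval does not change $\int_0^1$ — the Jacobian of $\sigma_i$ cancels the shortened range of integration — so $\int_0^1\Xi=\int_0^1\Phi+\int_0^1\Psi$ on the nose, whence $\Dil(\int_0^1\Xi)\le\Dil(\int_0^1\Phi)+\Dil(\int_0^1\Psi)$ by subadditivity of operator norms, of $x\mapsto x^{1/k}$, and of $\max$.

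The main thing to get right is the decision made in the first paragraph. If instead one ran the argument of \cite[Cor.~10.7]{GrMo} — put $(\Phi,\Psi)$ on two edges of a $2$-simplex (or three edges of a square), extend over the whole simplex by lifting along the restriction fibration, and restrict to the remaining edge — then, carried out quantitatively via Propositions~\ref{extRel} and \ref{prop:extHtpy}, each generator $v$ of $\mathcal{M}_Y^*$ would force an antidifferentiation of $\widehat H(dv)$, the value on $dv$ of the extension built so far, which is a \emph{product} of already-built, lower-degree pieces; this costs an extra power of $L$ per unit of degree and delivers only a polynomial bound. The explicit gluing sidesteps this by never constructing anything new: $\Xi$ is built out of $\Phi$ and $\Psi$ by pullback alone, and so inherits their dilatation up to the constant Lipschitz defect of the reparametrizations. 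The only cost is the midpoint subdivision, which, as noted, is cosmetic.
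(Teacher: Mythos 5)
Your construction is correct in outline and does give the stated bounds, but it is a genuinely different route from the paper's, and there is one issue you gloss over that is more than cosmetic.

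The paper proves transitivity the way \cite[Cor.~10.7]{GrMo} does: it places $\Phi$ and $\Psi$ on two edges of the formal square $\mathbb{R}\langle t,dt,s,ds\rangle$, lifts to the whole square, and restricts to the diagonal $s=t$. Crucially, the lift is given by the \emph{explicit} recursive formula
$$\bar\Xi(v)=``\Phi+\Psi\text{''}(v)+{\textstyle\int_0^s}\bigl(\bar\Xi(dv)-(\bar\Xi(dv))|_{t=1}\bigr),$$
which involves only products and the formal fiberwise integral $\int_0^s$ (an $L^\infty$-contraction), and after the WLOG rescaling $L=1$ this gives a constant bound with no isoperimetric constant appearing. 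Your last paragraph attributes to this strategy a polynomial loss from antidifferentiating $\bar\Xi(dv)$ over $X$ via Propositions~\ref{extRel}/\ref{prop:extHtpy}; that is a straw man. The paper's lift never invokes the isoperimetric inequality, so the ``antidifferentiation per degree'' cost you describe does not occur, and the square construction also gives a linear bound. (It also yields $\int_0^1\Xi=\int_0^1\Phi+\int_0^1\Psi$ on the nose, exactly as you derive on your side.)

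The genuine gap in your version is the type of $\Xi$. The proposition asks for $\Xi:\mathcal{M}_Y^* \to \Omega^*X\otimes\mathbb{R}\langle t,dt\rangle$, i.e.\ a \emph{polynomial} homotopy in the Sullivan sense; the object you build is polynomial on each half of $[0,1]$ but generically not polynomial on the whole interval, so it lives in $\Omega^*X$ tensored with piecewise-polynomial forms on the subdivided interval, not in $\Omega^*X\otimes\mathbb{R}\langle t,dt\rangle$. You call this ``cosmetic'' because $\Dil_\tau$ is computed after realization, and indeed the numerical estimates go through. But the proposition is used in places where $\Xi$ is fed back into the polynomial machinery---most visibly in step (4) of the proof of Theorem~\ref{thm:htpy}, where the concatenated homotopy $\Phi_k'$ is immediately lifted via Proposition~\ref{prop:extHtpy}\ref*{num:ext}, and also in the discussion where $\lengt$ is introduced as a pseudometric on $\Hom(\mathcal{M}_Y^*,\Omega^*X)$ defined by an infimum over homotopies of the stated type. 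Those uses are not wrong for piecewise homotopies, but they are stated for polynomial ones, so either you re-develop the supporting lemmas (the operators $\int_0^t,\int_0^1$, the identities \eqref{eqn:I0t}--\eqref{eqn:I01}, Propositions~\ref{extExist}--\ref{prop:extHtpy}) for the larger piecewise-polynomial interval algebra, or you post-process your $\Xi$ back into polynomial form---and the obvious way to do the latter is exactly the $\int_0^s$ lift, which is what the paper does from the start. So your version is correct as a proof of a mild variant of the proposition, and is arguably the more elementary argument, but to prove the statement as written one of those two repairs is needed.
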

\begin{proof}
  Roughly speaking, we will arrange $\Phi$ and $\Psi$ along two sides of a formal
  square, extend the map to the rest of the square, and then restrict to the
  diagonal to get $\Xi$.  Here is how this is done in detail.

  Write $\Phi_i^0$ and $\Psi_i^0$ for the coefficients of $t^i$ of $\Phi$ and
  $\Psi$ respectively, and $\Phi_i^1$, $\Psi_i^1$ for the coefficients of
  $t^idt$.  We first note that the formula
  $$``\Phi+\Psi\text{''}=\sum_{i=0}^{p_1} \Phi_i^0 \otimes t^i+
  \sum_{j=0}^{q_1} \Phi_j^1 \otimes t^jdt+\sum_{k=1}^{p_2} \Psi_k^0 \otimes s^k+
  \sum_{\ell=0}^{q_2} \Psi_\ell^1 \otimes s^\ell ds$$
  (where $k$ starts at 1 because the $t$ parts already restrict to $\psi$ when
  $t=1$) defines a DGA map
  $$\mathcal{M}_Y^* \to \Omega^*X \otimes \mathbb{R}\langle t,dt,s,ds \rangle/
  \langle s(t-1),(t-1)ds,sdt \rangle.$$
  This should be thought of as the DGA of two sides of a square, and we want to
  lift to a map
  $$\bar\Xi:\mathcal{M}_Y^* \to
  \Omega^*X \otimes \mathbb{R}\langle t,dt,s,ds \rangle$$
  to the DGA of the whole square.  We do this by induction on degree.  The map is
  trivial on $\mathcal{M}_Y^*(1)$; then we extend from $\mathcal{M}_Y^*(n)$ to
  $\mathcal{M}_Y^*(n+1)$ by defining
  $$\bar\Xi(v)=``\Phi+\Psi\text{''}(v)+{\textstyle\int_0^s}\bigl(\bar\Xi(dv)-
  (\bar\Xi(dv))|_{t=1}\bigr).$$
  It is easy to check that this has the right differential and the right
  restrictions to $t=1$ and to $s=0$.  Finally, we take the ``diagonal''
  $\Xi=\bar\Xi|_{s=t}$.

  Now we discuss the dilatation of $\Xi$.  The inequality for $\Dil(\int_0^1\Xi)$
  is clear since
  $${\textstyle\int_0^1\Xi}={\textstyle\int_0^1}(``\Phi+\Psi\text{''}|_{s=t})
  ={\textstyle\int_0^1\Phi}+{\textstyle\int_0^1\Psi}.$$
  For the other inequality, we can assume without loss of generality that $L=1$;
  we can achieve the conditions by scaling the metric on $X$ by $L$.  Thus we
  need to show that if $\Dil_1(\Phi) \leq 1$ and $\Dil_1(\Psi) \leq 1$, then
  $$\Dil_1(\Xi) \leq C(Y,\dim X).$$
  Clearly it is enough to bound the dilatation of the realization of $\bar\Xi$ as
  a map $\mathcal{M}_Y^* \to \Omega^*X \times [0,1]^2$.

  This can again be done by induction on degree.  We need only remark that:
  \begin{align*}
    \lVert\alpha\beta\rVert_\infty
    &\leq \lVert\alpha\rVert_\infty \cdot \lVert\beta\lVert_\infty \\
    \lVert \textstyle{\int_0^s}\omega \rVert_\infty
    &\leq \lVert\omega\rVert_\infty.
  \end{align*}
  This allows us to bound $\lVert \bar\Xi|_{V_k} \rVert_{\mathrm{op}}$ in terms of
  the operator norms in lower degrees and the structure of $\mathcal{M}_Y^*$.
  Thus the final constant we get depends on $Y$ and the dimension of $X$.
\end{proof}

\subsection{Homotopy periods and Gromov's results}\label{S:pi_k}

Now let $f:S^n \to Y$ be a smooth map.  If we attempt to nullhomotope
$f^*m_Y:\mathcal{M}^*_Y \to \Omega^*S^n$ by the method of
Prop.~\ref{prop:extHtpy}, the procedure does not fail until the very last step,
where the obstruction
$$\alpha \in H^{n+1}(\Omega^*S^n \otimes \mathbb{R}\langle t,dt \rangle \to
\Omega^*S^n \otimes \mathbb{R}\langle t \rangle/\langle t(1-t) \rangle;V_n) \cong
H^n(S^n;V_n)$$
given by the formula $\alpha=\bigl[f^*m_Y|_{V_n}+\int_0^1 \Phi_nd|_{V_n}\bigr]$ may
be nontrivial.  This obstruction determines an element of $\pi_n(Y) \otimes
\mathbb{R} \cong \Hom(V_n,\mathbb{R})$ and can be computed algorithmically by
repeated antidifferentiation.  In other words, it generalizes Whitehead's
construction of the Hopf invariant and coincides with the construction of
``homotopy periods'' outlined by Sullivan in \cite[\S11]{SulLong}.

\begin{ex} \label{exs:pi_n}
  \begin{enumerate}[leftmargin=*,label={(\roman*)}]
  \item Let $f:S^3 \to S^2$ be a map.  The minimal DGA of $S^2$ is given by
    $$\langle x^{(2)},y^{(3)} \mid dx=0,dy=x^2 \rangle;$$
    clearly, $m_{S^2}y=0$ in any minimal model.  Thus the first stage of a
    nullhomotopy of $f^*m_{S^2}$ is given by
    $$\Phi_2(x)=f^*m_{S^2}x \otimes (1-t)+c(x) \otimes dt$$
    where $dc(x)=f^*m_{S^2}x$.  The obstruction to extending this to $y$ is given
    by $-[f^*m_{S^2}x \wedge c(x)] \in H^3(S^3;\mathbb{R})$.  Up to sign, this is
    the Hopf invariant.
  \item For a slightly more complicated example, we consider $Y=(S^3 \times S^3)
    \setminus D^6$.  This is a 6-manifold homotopy equivalent to $S^3 \vee S^3$;
    the relevant part of the minimal model is
    $$\langle x_1^{(3)},x_2^{(3)},y^{(5)},z_1^{(7)},z_2^{(7)},\ldots \mid
    dx_i=0,dy=x_1x_2,dz_i=x_iy,\ldots \rangle.$$
    Consider a map $f:S^7 \to Y$; again we try to nullhomotope $f^*m_Y$, and the
    first few stages are
    \begin{align*}
      \Phi(x_i) &= f^*m_Yx_i \otimes (1-t)-c(x_i) \otimes dt \\
      \Phi(y) &= f^*m_Yy \otimes (1-t)+\frac{1}{2}(f^*m_Yx_1 \wedge c(x_2)
      -c(x_1) \wedge f^*m_Yx_2) \otimes (t-t^2)-c(y) \otimes dt,
    \end{align*}
    where $dc(x_i)=f^*m_Yx_i$ and
    $$dc(y)=f^*m_Yy+\frac{1}{2}(f^*m_Yx_1 \wedge c(x_2)
    -c(x_1) \wedge f^*m_Yx_2).$$
    Since $m_Yz_i=0$ for dimension reasons, the obstruction to extending to $z_i$
    is given by
    $$-c(x_i) \wedge \Bigl(\frac{1}{2}f^*m_Yy+\frac{1}{12}(f^*m_Yx_1 \wedge c(x_2)
    -c(x_1) \wedge f^*m_Yx_2)\Bigr)+\frac{1}{2}f^*m_Yx_i \wedge c(y).$$
    Clearly, homotopy periods quickly become impractical to compute by hand for
    more complicated DGAs.  Similar examples were computed by Richard Hain in his
    PhD thesis \cite{HainBk}.
  \end{enumerate}
\end{ex}

Suppose now that $f$ is $L$-Lipschitz.  Then by \eqref{eqn:better}, we can make
sure that the obstruction class $\alpha$ satisfies
$\lVert\alpha\rVert_{\mathrm{op}} \lesssim L^{2n-2}$.  Since the map
$\pi_n(X) \to H^n(S^n;V_n)$ is a group homomorphism with finite kernel and
covolume, this proves the following results of Gromov:
\begin{thm} Let $Y$ be simply connected and Lipschitz homotopy equivalent to a
  finite complex.
  \begin{enumerate}[leftmargin=*,label={(\roman*)}]
  \item The distortion function of an element of $\pi_n(Y)$ is
    $\Omega(k^{1/(2n-2)})$.
  \item The growth function of $\pi_n(Y)$ is polynomial and in fact
    $O(L^{(2n-2)\rk(\pi_n(Y) \otimes \mathbb{Q})})$.
  \end{enumerate}
\end{thm}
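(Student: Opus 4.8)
The plan is to read off both statements from one quantity: the \emph{homotopy period} obstruction $\alpha(f)\in\pi_n(Y)\otimes\mathbb{R}$ attached to an $L$-Lipschitz map $f:S^n\to Y$, using that \eqref{eqn:better} forces $\lVert\alpha(f)\rVert\lesssim L^{2n-2}$ while $[f]\mapsto\alpha(f)$ is, up to a choice of norms, the rationalization homomorphism $\pi_n(Y)\to\pi_n(Y)\otimes\mathbb{R}$. First I would make the routine reductions: replace $Y$ by a Lipschitz homotopy equivalent compact simply connected Riemannian manifold with boundary (say a regular neighborhood of a finite complex), which does not affect growth or distortion beyond the implicit constants; and, given an $L$-Lipschitz $f:S^n\to Y$, use Proposition~\ref{prop:qSAT} to replace it by a homotopic simplexwise smooth $C(L+1)$-Lipschitz map, so that $f^*m_Y:\mathcal{M}_Y^*\to\Omega^*S^n$ is defined and $\Dil(f^*m_Y)\le C(L+1)$.

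The heart of the argument is the skeleton-by-skeleton construction behind Proposition~\ref{prop:extHtpy} and \eqref{eqn:better}. Take $\psi$ to be the augmentation $\mathcal{M}_Y^*\to\mathbb{R}\hookrightarrow\Omega^*S^n$ (so $\Dil(\psi)=0$) and attempt to build a homotopy $\Phi$ from $f^*m_Y$ to $\psi$. The obstruction to extending the partial homotopy from degree $k-1$ to degree $k$ lies in $H^k(S^n;V_k)$, which vanishes for $2\le k\le n-1$; so (taking $n\ge3$; the case $n=2$ is immediate since $dV_2=0$) one can build $\Phi$ on $\mathcal{M}_Y^*(n-1)$ with the improved bounds $\lVert\Phi_i^j|_{V_k}\rVert_{\mathrm{op}}\lesssim L^{2k-2}$ from \eqref{eqn:better}, valid for $k\le n-1$. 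The first potentially nonzero obstruction, to extending over $V_n$, is the homotopy period $\alpha(f)=\bigl[f^*m_Y|_{V_n}+\int_0^1\Phi\,d|_{V_n}\bigr]\in H^n(S^n;V_n)\cong\Hom(V_n,\mathbb{R})\cong\pi_n(Y)\otimes\mathbb{R}$. For $v\in V_n$, minimality gives $dv\in\bigwedge_{k=2}^{n-1}V_k$, so $dv$ is a sum of products of $\ell\ge2$ indecomposables of degrees $r_1,\dots,r_\ell$ with $\sum r_i=n+1$; applying the inductive bound to each factor, $\Phi(dv)$, hence $\int_0^1\Phi(dv)$, has $L^\infty$-norm $\lesssim L^{\sum(2r_i-2)}=L^{2(n+1)-2\ell}\le L^{2n-2}$, while $f^*m_Y(v)$ has norm $\lesssim L^n\le L^{2n-2}$. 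Therefore $\lVert\alpha(f)\rVert\le C(Y)L^{2n-2}$.

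It remains to invoke Sullivan's theory of homotopy periods (\cite[\S11, Thm.~10.2(i)]{SulLong}; cf.\ also the obstruction-theoretic exact sequence of groups above) to know that $[f]\mapsto\alpha(f)$ is a well-defined group homomorphism, independent of the choices in building $\Phi$, equal to the natural map $\pi_n(Y)\to\pi_n(Y)\otimes\mathbb{R}$ up to the linear isomorphism fixed by the norm on $V_n$; in particular it has finite kernel (the torsion subgroup) and its image lies in a lattice $\Lambda$ of rank $r:=\rk(\pi_n(Y)\otimes\mathbb{Q})$. For (i): if $\alpha\in\pi_n(Y)$ has infinite order, its image $\bar\alpha$ is a fixed nonzero vector, and a $\delta_\alpha(k)$-Lipschitz representative $f_k$ of $k\alpha$ satisfies $k\lVert\bar\alpha\rVert=\lVert\alpha(f_k)\rVert\le C(Y)\delta_\alpha(k)^{2n-2}$, so $\delta_\alpha(k)=\Omega(k^{1/(2n-2)})$. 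For (ii): every class of $\pi_n(Y)$ admitting an $L$-Lipschitz representative maps into $\Lambda\cap B_{C(Y)L^{2n-2}}$, a set of $O(L^{(2n-2)r})$ points, and the fibers of $\pi_n(Y)\to\Lambda$ all have the same finite size, so the $L$-ball in $\pi_n(Y)$ has $O(L^{(2n-2)r})$ elements.

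Every individual estimate here is standard, so I expect the real work to be twofold: running \eqref{eqn:better} cleanly on the sphere --- i.e.\ checking carefully that no obstruction below degree $n$ forces a choice that disturbs the inductive $L^{2k-2}$ bound --- and, more essentially, the input of the last paragraph, that $\alpha(f)$ is genuinely choice-independent and additive in $[f]$. The latter is the place where one must rely on the established structure of minimal models rather than re-derive anything, and I would present it as known.
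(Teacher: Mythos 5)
Your proposal is correct and follows essentially the same route as the paper: the paper's own proof is a single sentence invoking the bound \eqref{eqn:better} on the inductively constructed partial nullhomotopy to get $\lVert\alpha\rVert_{\mathrm{op}}\lesssim L^{2n-2}$ for the degree-$n$ obstruction class, together with the finite kernel and covolume of the induced map $\pi_n(Y)\to H^n(S^n;V_n)$. The details you fill in (reduction to a manifold, vanishing of $H^k(S^n;V_k)$ for $2\le k\le n-1$ so that \eqref{eqn:better} applies, the degree count $\sum(2r_i-2)=2(n+1)-2\ell\le 2n-2$) are exactly the calculations the paper is summarizing.
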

While these were stated in various combinations in \cite{GrHED}, \cite{GrQHT},
and \cite[Ch.~7]{GrMS}, the proofs are essentially omitted in the first two and
incorrect in the last.  This section is meant to close this gap.

The bounds above, however, are not sharp in most cases.  We do not currently know
how to express in full generality the bounds whose sharpness is presumed by
Gromov's conjectures.  They are obtained by assuming that all pullbacks of
genuine $k$-forms on $Y$ have $L^\infty$ norm $\lesssim L^k$ and inducting to
obtain bounds on other forms; on the other hand, the algorithm given at the
beginning of this subsection (which coincides with that given by Sullivan and is
at least weakly canonical) does not always produce the optimal exponent.  We
illustrate this by way of yet another example.
\begin{ex}
  Let $\mathbf{NF}$ be an 8-complex with the minimal model
  $$\mathcal{M}_{\mathbf{NF}}^*=\langle x^{(3)},y^{(3)},z^{(5)},T^{(10)},\ldots \mid
  dx=dy=0, dz=xy, dT=xyz, \ldots \rangle.$$
  The geometry of this complex is discussed further in \S\ref{S:D&G}.  Here we
  focus on the algebra.  Note that $\pi_{10}(\mathbf{NF})$ has a single rational
  generator.  Suppose $f:S^{10} \to \mathbf{NF}$ is $L$-Lipschitz; then we get
  the following partial nullhomotopy of $f^*m_{\mathbf{NF}}$:
  \begin{align*}
    \Phi(x) &= f^*m_{\mathbf{NF}}x \otimes (1-t)-c(x) \otimes dt \\
    \Phi(y) &= f^*m_{\mathbf{NF}}y \otimes (1-t)-c(y) \otimes dt \\
    \Phi(z) &= f^*m_{\mathbf{NF}}z \otimes (1-t)+\frac{1}{2}(f^*m_{\mathbf{NF}}x
    \wedge c(y)-c(x) \wedge f^*m_{\mathbf{NF}}y) \otimes (t-t^2)-c(z) \otimes dt,
  \end{align*}
  where
  \begin{align*}
    dc(x) &= f^*m_{\mathbf{NF}}x \\
    dc(y) &= f^*m_{\mathbf{NF}}y \\
    dc(z) &= f^*m_{\mathbf{NF}}z+\frac{1}{2}(f^*m_{\mathbf{NF}}x \wedge c(y)-
    c(x) \wedge f^*m_{\mathbf{NF}}y).
  \end{align*}
  Then our algorithm computes the obstruction to extending the nullhomotopy to
  $T$ as
  $$-\frac{1}{3}(c(x) \wedge f^*m_{\mathbf{NF}}(y \wedge z)+c(y) \wedge
  f^*m_{\mathbf{NF}}(z \wedge x)+c(z) \wedge f^*m_{\mathbf{NF}}(x \wedge y)).$$
  Now, $\lVert c(x) \rVert_\infty$ and $\lVert c(y) \rVert_\infty \lesssim L^3$ by
  Lemma \ref{lem:IP}, but the same argument only yields
  $\lVert c(z) \rVert \lesssim L^6$.  This gives a bound of $O(L^{11})$ for the
  first two terms but $O(L^{12})$ for the last.  On the other hand, the last term
  can be eliminated by subtracting the exact form
  $d(c(z) \wedge f^*m_{\mathbf{NF}}z)$.  Thus we get an overall bound
  $\langle T,f \rangle=O(L^{11})$.  As we will see below, this bound is sharp.
\end{ex}

\section{The shadowing principle}

\begin{thm}[The shadowing principle] \label{thm:main}
  Let $(X,A)$ be an $n$-dimensional simplicial pair with the standard metric on
  simplices and $Y$ a simply connected compact Riemannian manifold with boundary
  which has a minimal model $m_Y:\mathcal{M}_Y^* \to \Omega^*Y$.  Fix norms on
  the spaces $V_k$ of $k$-dimensional indecomposables of $\mathcal{M}_Y^*$.  Let
  $f:X \to Y$ be a map and $\ph:\mathcal{M}_Y^* \to \Omega^*X$ a homomorphism
  such that
  \begin{enumerate}
  \item $f^*m_Y|_A=\ph|_A$ (i.e.~the homomorphisms restrict to the same
    homomorphism $\mathcal{M}_Y^* \to \Omega^*A$.)
  \item $f^*m_Y$ and $\ph$ are homotopic rel $A$ (i.e.~via a homotopy whose
    restriction to $A$ is constant.)
  \item $f|_A$ is $L$-Lipschitz.
  \item $\Dil(\ph) \leq L$.
  \end{enumerate}
  Then $f$ is homotopic rel $A$ to a $C(L+1)$-Lipschitz map $g:X \to Y$ such that
  $g^*m_Y \simeq \ph$ via a homotopy
  $$\Phi:\mathcal{M}^*_Y \to \Omega^*(X) \otimes \langle t,dt \rangle$$
  whose restriction to $A$ is constant, such that $\Dil_{1/L}(\Phi) \leq C(L+1)$.
  The constant $C$ depends on $Y$, $m_Y$, and the norms on indecomposables, as
  well as $n$ (but not anything else about $X$.)
\end{thm}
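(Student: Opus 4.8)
\emph{Strategy and reductions.} The plan is to carry out Guth's skeleton-by-skeleton argument, sketched in the introduction, in full generality: $\ph$, together with the algebraic homotopy $H\colon\mathcal{M}_Y^*\to\Omega^*X\otimes\langle t,dt\rangle$ supplied by hypothesis~(2), plays the role of the controlled ``algebraic'' datum, and $f$ the role of the uncontrolled geometric one, which we pull toward it. We build the homotopy $F\colon X\times[0,1]\to Y$ realizing the passage from $f$ to $g$ by induction over the skeleta of a triangulation of $X\times[0,1]$ that is $L$-regular at scale $\sim 1/L$ on a collar $X\times[1-\epsi,1]$ and arbitrary elsewhere, subordinate to the product structure on $A\times[0,1]$; only the behavior on the controlled collar will be tracked quantitatively, which suffices since we only need $g=F|_{t=1}$ and the extracted algebraic homotopy to be controlled. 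Since $\dim X=n$ we may replace $Y$ by its $(n+1)$st Postnikov stage, and after an uncontrolled preliminary homotopy rel $A$ we may assume $F$ is simplicial on $X\times\{0\}$. We also arrange, rewriting $H$, that the realized ideal $\rho(H)\colon\mathcal{M}_Y^*\to\Omega^*(X\times[0,1])$ equals the constant homotopy at $\ph$ on the collar.

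\emph{The induction.} Going up the Postnikov tower of $Y$ through dimension $n+1$, at the $k$th stage we lift $F$ along the principal fibration $K(\pi_kY,k)\to Y_{(k)}\to Y_{(k-1)}$ over the skeleton built so far. For $k$ below the bottom of $Y$ we send everything to the basepoint; in general the geometric obstruction cochain $c\in C^{k}(X\times[0,1];\pi_kY)$ to extending $F$ over the next skeleton is, by the obstruction lemmas of \S\ref{S:Obs} (Propositions~\ref{extExist}, \ref{extRel}, \ref{prop:extBoth}), a cocycle relative to the already-pinned part, which includes $X\times\{0\}$, where $F=f$, and $A\times[0,1]$. We compare it with the cocycle $w$ obtained by integrating the appropriate component of the ideal $\rho(H)$ over simplices; since on the collar $\rho(H)$ is the realization of a homotopy of dilatation $\le C(L+1)$, the values of $w$ there, and hence of $w-c$, are $O(1)$ on the $1/L$-simplices. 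The relative cocycle $w-c$ is therefore a bounded relative coboundary $\delta b$ by a simplicial isoperimetric inequality on the regular subdivision with constant independent of $L$, proved as in Lemma~\ref{lem:IP} by integrating bounded primitives for forms back to bounded primitives for cochains. Rounding $b$ to the nearest $\pi_kY$-valued cochain $[b]$---still zero on the pinned part---and modifying $F$ by the corresponding homotopy rel that part replaces $c$ by $c+\delta[b]$, which lies within $O(1)$ of $w$ and is thus bounded; consequently $F$ extends over the next skeleton using one of finitely many standard maps on each simplex, the finite torsion in $\pi_kY$ being absorbed into this standardization exactly as Guth handles dimensions above $n$. All maps built this way are $\mathcal{F}$-mosaic on the collar for a fixed shard complex, and since the edge length there is $\sim1/L$, every such map---in particular $g=F|_{t=1}$---is $C(L+1)$-Lipschitz.

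\emph{Conclusion and main obstacle.} Since $F|_{t=0}=f$ throughout, $F$ is a genuine homotopy rel $A$ from $f$ to $g$; since $F$ on the collar tracks the ideal, $g^*m_Y$ is close to $\ph$, and because both $g^*m_Y$ and $\ph$ have dilatation $\le C(L+1)$, the remarks following Proposition~\ref{prop:extHtpy}, in particular the estimate \eqref{eqn:better}, yield a homotopy $\Phi$ between them, rel $A$, with $\Dil_{1/L}(\Phi)\le C(L+1)$; accumulating the constants over the at most $n+1$ stages gives the asserted dependence on $Y$, $m_Y$, the norms on the $V_k$, and $n$. The main obstacle is the bookkeeping of this construction: keeping every obstruction cochain, coboundary, and correction relative simultaneously to $A$ and to the growing pinned part (so that one works throughout with the relative obstruction lemma \ref{extRel} and relative isoperimetric constants, and the roundings are supported in the interior); controlling operator norms so that the bound of Proposition~\ref{prop:extHtpy}\ref{num:obst} keeps the obstruction at the next stage of size $O(1)$ on $1/L$-simplices---this is where the hypotheses $\Dil(\ph)\le L$ and the regularity of the subdivision enter essentially; and establishing the $L$-independent simplicial isoperimetric inequality on regular subdivisions.
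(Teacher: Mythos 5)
Your overall plan---pull an uncontrolled $F$ toward the algebraic datum by Guth's skeleton-by-skeleton method, comparing an obstruction cochain $c$ against an ideal cochain $w$ and rounding a primitive $b$ of $w-c$---is indeed the right strategy, and structurally resembles the paper's proof. But there are two places where the proposal would fail as written, and both are exactly where the paper's proof does its real work.

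First, a local misstep: the claim that $w-c$ is $O(1)$, and hence $b$ is bounded by an isoperimetric inequality, is not correct and is also not needed. At stage $k$ the cochain $c$ records the behavior of the (still uncontrolled) modified $F$ on $k$-simplices, which can be arbitrarily large, so $w-c$ is not bounded; all one needs is that $w-c$ is a relative coboundary (by vanishing of $H^k(X\times[0,1],\,X\times\{0\}\cup A\times[0,1])$), and then the rounding error $[b]-b$, not $b$ itself, is what is $O(1)$, so that $c+\delta[b]=w+\delta([b]-b)$ lies within $O(1)$ of $w$. That part of your reasoning is salvageable once restated. The genuine gap is in the final extraction of the algebraic homotopy $\Phi$ between $g^*m_Y$ and $\ph$. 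You appeal to the discussion around \eqref{eqn:better}, but that estimate is conditional on being able to build the homotopy degree by degree with no nonzero obstructions, which is not automatic for a generic pair of homotopic homomorphisms; and more seriously, its constant $C(k,X,Y)$ involves the \emph{global} isoperimetric constant of Lemma~\ref{lem:IP} for forms on $X$, which scales with the diameter of the rescaled complex and so is not uniform in the bounded geometry of $X$, contradicting the ``not anything else about $X$'' part of the theorem. The paper avoids both difficulties by never constructing $\Phi$ after the fact. Instead it carries a controlled DGA homotopy $\Phi_k$ from $g_k^*m_Y$ to $\ph$ through the entire induction, with the invariant $\Dil_1\bigl((\Phi_k|_{\mathcal{M}_Y^*(k)})|_{X^{(k)}}\bigr)\le C_k(n,Y)$, and produces $\Phi_{k+1}$ from $\Phi_k$ together with the geometric homotopy $H_{k+1}$ via a second-order homotopy $\Psi_{k+1}:\mathcal{M}_Y^*\to\Omega^*(X\times[0,1])\otimes\mathbb{R}\langle s,ds\rangle$, using only the local quantitative Poincar\'e lemmas so that all constants depend on $n$ and $Y$ alone. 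Moreover, the cochain $b$ in the paper is read off directly from $\beta_k=\int_0^1\Phi_k$, so there is no need to solve $\delta b = w-c$ at all, and a separate lemma (Lemma~\ref{lem:close}) is proved to show that the geometric relative homotopy class of $g_k|_p$ really is within $O(1)$ of $\langle\delta b,p\rangle$. These are the pieces your sketch would need to supply to become a proof; as written, the step from ``$F$ tracks the ideal on the collar'' to the existence of a $\Phi$ with $\Dil_{1/L}(\Phi)\le C(L+1)$ and $C$ independent of $X$ is the missing heart of the argument.
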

The condition that $Y$ be a manifold is only necessary for the technical
definitions.  In most applications, one can use any space which is Lipschitz
homotopy equivalent to a manifold, for example any simplicial complex with a
piecewise linear metric.

As stated in the introduction, we want to interpret the shadowing principle as
saying that pullbacks of genuine maps have reasonably high density in
$\Hom(\mathcal{M}_Y^*,\Omega^*X)$ when it is endowed with a metric of the form
$$d(\ph,\psi)=
\inf \bigl\{\text{size}(\Phi): \ph \overset{\Phi}{\simeq} \psi\bigr\},$$
for some notion of size.  However, there is some difficulty in defining an
appropriate such notion---that is, we would like the size of a constant homotopy
to be zero and the notion of distance to be nondegenerate and satisfy the
triangle inequality, and this is already nontrivial.  One notion that satisfies
these two properties, at least when the source is a minimal model, is the
\emph{formal length}, given by
$$\lengt(\Phi)=\Dil\bigl({\textstyle \int_0^1} \Phi\bigr).$$
The triangle inequality is given by Lemma \ref{prop:cat} and nondegeneracy
follows from applying \eqref{eqn:I01} to the lowest-degree indecomposable on
which the two homomorphisms differ.

Under this metric, the theorem states that there is a pullback of a genuine map
within distance $O(L)$ of any homomorphism with dilatation $L$ which lies in the
homotopy class of the pullback of a genuine map.  Put this way, this is a
nontrivial statement since the set of all homomorphisms with dilatation $\leq L$
has diameter which is in general some polynomial in $L$; this polynomial is
linear only when $Y$ has finite homotopy groups up to dimension $n$.

Unfortunately, the formal length does not correspond well to the length of a
genuine homotopy, or, as far as I can tell, any geometric invariant of genuine
maps.  In particular, as shown in \cite{CaSi} and again in this paper in Theorem
\ref{thm:htpy}, when the space $\Map(X,Y)$ is equipped with the metric given by
the optimal (geometric) length of a homotopy (ignoring thickness), the diameter
of each connected component is finite, with a uniform bound.
\begin{proof}
  By subdividing $(X,A)$ at scale $1/L$ and rescaling so that simplices are unit
  size, we may assume $L=1$; here we implicitly use the uniformity of the result
  with respect to the large-scale geometry of $X$.  We also subdivide once if the
  star of $A$, denoted $\st(A)$, does not retract to $A$.

  At the cost of increasing the Lipschitz constant again to some $C_0=C_0(n,Y)$,
  we may also assume that $f|_A$ is mosaic with respect to a fixed shard complex
  $Z \subset \Delta Y$ with $Z^{(1)}=*$.  To reduce to this case, we modify both
  $f$ and $\ph$ on $\st(A)$, which we equip with a facewise linear deformation
  retraction to $A$,
  $$\pi:\st(A) \times [0,1] \to \st(A),$$
  and a simplexwise linear map $\tau:\st(A) \to [0,1]$ sending $A \mapsto 0$ and
  $\lk(A) \mapsto 1$.  Let $H:A \times [0,1] \to Y$ be a $C(n,Y)$-Lipschitz
  homotopy to a $C(n,Y)$-Lipschitz mosaic map on some chosen shard complex.
  (Such a homotopy can be constructed by simplicially approximating on a complex
  which is homotopy equivalent to $Y$.)  We use this homotopy on a collar of
  width $1/2$ around $A$, pushing $f|_{\st A}$ to the outer part of the collar:
  $$\hat f(x)=\left\{\begin{array}{l l}
  H(\pi(x,1),1-2\tau(x)) & \tau(x) \leq 1/2 \\
  f(\pi(x,2-2\tau(x)) & \tau(x) \geq 1/2.
  \end{array}\right.$$
  We push $\ph$ to the outer half of $\st(A)$ by a similar formula, adding
  $H^*m_Y$ on the inner half; this gives us an algebraic map $\hat\ph$.  Applying
  the rest of the proof to $\hat f$ and $\hat\ph$, we produce a map $\hat g$ with
  the required properties such that $\hat g|_A=\hat f|_A$.  To get the desired
  $g$ we again push $\hat g|_{\st(A)}$ to the outer 2/3 of the star and add $H$,
  going in the opposite direction, to the collar.  To show that the resulting $g$
  indeed has a short homotopy to $\ph$, note that it clearly has a short homotopy
  to the algebra map $\hat{\hat\ph}$ which is given by $H^*m_Y$ on the inner
  third of $\st(A)$, pushing $\hat\ph$ out.  But this map in turn has a short
  homotopy to $\ph$.

  We now give an overview of the induction on skeleta that characterizes the rest
  of the proof.  At the $(k+1)$st step, we will produce an increasingly
  controlled intermediate map $g_{k+1}$ which is homotopic to $g_k$ via a homotopy
  $H_{k+1}$ (and therefore homotopic to $f$).  In particular, $g_k$ will be equal
  to the final $g$ on the $k$-skeleton of $X$ and $H_{k+1}$ will be a constant
  homotopy on the $(k-1)$-skeleton; its behavior on $k$-cells is crucial for
  establishing control over the behavior of $g_{k+1}$ on $(k+1)$-simplices.
  Essentially, the behavior of $g_k$ on $(k+1)$-simplices allows us to define an
  ``almost coboundary'' in $C^{k+1}(X;\pi_{k+1}(Y) \otimes \mathbb{R})$ and the
  homotopy $H_{k+1}$ changes this cochain by the coboundary that it almost is,
  leaving a uniformly bounded remainder.

  In order to figure out a recipe for doing this which can be continued further,
  we consult a homotopy $\Phi_k$ between $g_k$ and $\ph$ over which we also have
  increasing control depending on $k$.  We then construct $\Phi_{k+1}$ from
  $\Phi_k$ and $H_{k+1}$ via a second-order homotopy $\Psi_{k+1}$.  The objects we
  produce are summarized in Figure \ref{fig}.
  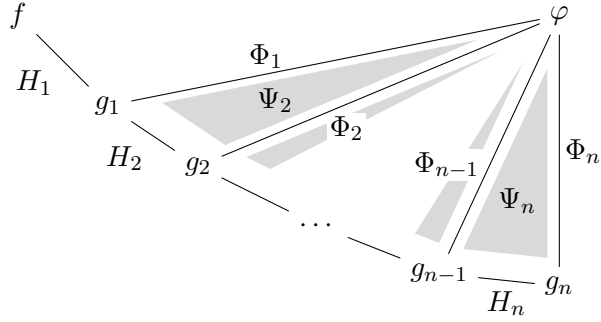
\begin{figure}
    \centering
    \begin{tikzpicture}[scale=1.2]
      \node (phi) at (0,0) {$\ph$};
      \node (g0) at (-6,0) {$f$};
      \node (g1) at (-5,-1) {$g_1$};
      \node (g2) at (-4,-1.67) {$g_2$};
      \node (vd) at (-2.67,-2.33) {$\cdots$};
      \node (gn-1) at (-1.33,-2.86) {$g_{n-1}$};
      \node (gn) at (0,-3) {$g_n$};
      \fill[color=gray!30!white] (barycentric cs:phi=0.8,g1=0.1,g2=0.1) --
      (barycentric cs:phi=0.1,g1=0.8,g2=0.1) --
      (barycentric cs:phi=0.1,g1=0.1,g2=0.8);
      \fill[color=gray!30!white] (barycentric cs:phi=0.8,g2=0.1,vd=0.1) --
      (barycentric cs:phi=0.1,g2=0.8,vd=0.1) --
      (barycentric cs:phi=0.1,g2=0.6,vd=0.3);
      \fill[color=gray!30!white] (barycentric cs:phi=0.8,gn-1=0.1,vd=0.1) --
      (barycentric cs:phi=0.1,gn-1=0.8,vd=0.1) --
      (barycentric cs:phi=0.1,gn-1=0.6,vd=0.3);
      \fill[color=gray!30!white] (barycentric cs:phi=0.8,gn-1=0.1,gn=0.1) --
      (barycentric cs:phi=0.1,gn-1=0.8,gn=0.1) --
      (barycentric cs:phi=0.1,gn-1=0.1,gn=0.8);
      \draw (g1) -- (phi) node[pos=0.33,anchor=south,inner sep=2pt] {$\Phi_1$}; 
      \draw (g2) -- (phi) node[pos=0.33,fill=white,anchor=north west,inner sep=1pt] {$\Phi_2$};
      \draw (gn-1) -- (phi) node[pos=0.33,fill=white,anchor=south east,inner sep=1pt] {$\Phi_{n-1}$};
      \draw (gn) -- (phi) node[midway,anchor=west,inner sep=2pt] {$\Phi_n$};
      \draw (g0) -- (g1) node[midway,anchor=north east] {$H_1$};
      \draw (g1) -- (g2) node[midway,anchor=north east] {$H_2$};
      \draw (g2) -- (vd) (vd) -- (gn-1);
      \draw (gn-1) -- (gn) node[midway,anchor=north] {$H_n$};
      \node at (barycentric cs:phi=0.3,g1=0.35,g2=0.35) {$\Psi_2$};
      \node at (barycentric cs:phi=0.3,gn-1=0.35,gn=0.35) {$\Psi_n$};
    \end{tikzpicture}
    \caption{\label{fig} A summary of the various maps, homotopies, and second
      order homotopies produced in the proof of Theorem \ref{thm:main}.  The
      bottom row consists of genuine maps $X \to Y$ and homotopies between them;
      the rest is on the level of DGAs.  Maps become better controlled from left
      to right.}
  \end{figure}

  As a first step, we homotope $f$ rel $A$ to a map $g_1$ which sends $X^{(1)}$ to
  the basepoint of $Y$.  We also choose a homotopy
  $$\Phi_1:\mathcal{M}_Y^* \to \Omega^*(X) \otimes
  \mathbb{R}\langle t,dt \rangle$$
  between $g_1^*m_Y$ and $\ph$.

  After the $k$th step, we assume that we have constructed the following:
  \begin{itemize}
  \item a map $g_k:(X,A) \to Y$, homotopic rel $A$ to $f$, such that
    $g_k|_{X^{(k)}}$ is mosaic with respect to a shard complex
    $Z_k \subset \Delta Y$ which depends only on $Y$, $m_Y$, and the norms on the
    $V_i$;
  \item a homotopy $\Phi_k:\mathcal{M}_Y^* \to \Omega^*(X) \otimes
    \mathbb{R}\langle t,dt \rangle$ from $g_k^*m_Y$ to $\ph$ such that
    $$\Dil_1\bigl((\Phi_k|_{\mathcal{M}^*_Y(k)})|_{X^{(k)}}\bigr) \leq C_k=C_k(n,Y).$$
  \end{itemize}
  We write $\beta_k=\int_0^1 \Phi_k$; note that for $v \in V_i$,
  $d\beta_k(v)=\ph(v)-g_k^*m_Y(v)-\int_0^1 \Phi_k(dv)$ and $\beta_k(v)|_A=0$.

  We then construct the analogues one dimension higher.  Let
  $b \in C^k(X,A;\pi_{k+1}(Y))$ be the simplicial cochain obtained by
  integrating $\beta_k|_{V_{k+1}}$ over $k$-simplices and choosing an element of
  $\pi_{k+1}(Y)$ whose image in $V_{k+1}$ is as close as possible in norm (but
  which is otherwise arbitrary.)  Note that the values of $b$ are potentially
  unbounded.  We use $b$ to specify a homotopy $H_{k+1}:X \times [0,1] \to Y$
  from $g_k$ to our new desired map $g_{k+1}$.

  We start by setting $H_{k+1}$ to be constant on $X^{(k-1)}$.  On each $k$-simplex
  $q$, we set $H_{k+1}|_q$ to be a map such that
  $$g_{k+1}|_q=H_{k+1}|_{q \times \{1\}}=H_{k+1}|_{q \times \{0\}}=g_k|_q,$$
  but such that on the cell $q \times [0,1]$, the map traces out the element
  $\langle b, q \rangle \in \pi_{k+1}(Y)$.  This is well-defined since
  $H_{k+1}|_{\partial(q \times [0,1])}$ is canonically nullhomotopic by precomposition
  with a linear contraction of the simplex.

  Now, given that $g_{k+1}=g_k$ on the $k$-skeleton, the possible relative
  homotopy classes of the restriction of $g_{k+1}$ to a $(k+1)$-simplex $p$ form
  a torsor for $\pi_{k+1}(Y)$.  No matter how we extend $H_{k+1}$ over
  $p \times [0,1]$, we will get $g_{k+1}|_p-g_k|_p=\langle \delta b, p \rangle$ in
  this torsor.  For each possible restriction $g_k|_{\partial p}$ (of which there
  are finitely many since they correspond to simplicial maps
  $\partial\Delta^{k+1} \to Z_k$) we fix representatives for each element of this
  torsor.  We then set $g_{k+1}|_p$ to be the appropriate representative.

  We then extend the homotopy in an arbitrary way to higher skeleta.

  We now argue that, for a given $Z_k$-mosaic map
  $u_0:\partial \Delta^{k+1} \to Y$, the number of extensions of $u_0$ to
  $\Delta^{k+1}$ which could occur as $g_{k+1}|_p$ for some $(k+1)$-simplex $p$ are
  drawn from a finite set depending only on $C_k$, $Y$, and the norms on the
  $V_i$, $i \leq k+1$.  At various stages we will write ``$\lesssim 1$''
  for numbers that are bounded by a constant depending on these items.  Thus for
  example, every such $u_0$ has a $\lesssim 1$-Lipschitz extension
  $u:\Delta^{k+1} \to Y$.  In this language, it is enough to show the following
  lemma:
  \begin{lem} \label{lem:close}
    Let $p$ be a simplex of $X$ such that $g_k|_{\partial p}=u_0$.  The homotopy
    class of the map $\tilde u:S^{k+1} \to Y$ given by $g_k|_p$ on the northern
    hemisphere and the fixed extension $u$ on the southern hemisphere is
    contained in a $\lesssim 1$-ball around $\langle \delta b, p \rangle$ in
    $V_{k+1}^*$.
  \end{lem}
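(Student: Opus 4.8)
The plan is to identify the class $[\tilde u]$, viewed in $V_{k+1}^*\cong\pi_{k+1}(Y)\otimes\mathbb R$, with $\langle\delta b,p\rangle$ up to a bounded error, by computing $[\tilde u]$ through homotopy periods as in \S\ref{S:pi_k}. First I would clear away the rounding and apply Stokes: since $b(q)\in\pi_{k+1}(Y)$ is chosen so that its image in $V_{k+1}^*$ is nearest to the functional $v\mapsto\int_q\beta_k(v)$, it lies within $\lesssim1$ of that functional, and as $\partial p$ has boundedly many facets, $\langle\delta b,p\rangle=\langle b,\partial p\rangle$ differs by $\lesssim1$ in $V_{k+1}^*$ from $v\mapsto\int_{\partial p}\beta_k(v)=\int_p d\beta_k(v)$. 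Recalling that $d\beta_k(v)=\ph(v)-g_k^*m_Y(v)-\int_0^1\Phi_k(dv)$ for $v\in V_{k+1}$, it therefore suffices to show that $[\tilde u](v)=\pm\int_p d\beta_k(v)+O(1)$.

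To compute $[\tilde u](v)$ I would build a convenient partial nullhomotopy $\Theta\colon\mathcal M_Y^*(k)\to\Omega^*(S^{k+1})\otimes\langle t,dt\rangle$ of $\tilde u^*m_Y$; by \S\ref{S:pi_k} the period is then $[\tilde u](v)=\pm\int_{S^{k+1}}\bigl(\tilde u^*m_Y(v)+\int_0^1\Theta(dv)\bigr)$ for $v\in V_{k+1}$, and since the groups $H^j(S^{k+1};V_j)$ and $H^{j-1}(S^{k+1};V_j)$ vanish for $2\le j\le k$, partial nullhomotopies through degree $k$ are unique up to homotopy, so any $\Theta$ gives the same value. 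I construct $\Theta$ by gluing along the equator $\partial p$. On the northern hemisphere $D^N$, identified with the simplex $p$ (on which $\tilde u=g_k|_p$), take $\Theta$ to be the explicit concatenation of Prop.~\ref{prop:cat} of the given homotopy $\Phi_k|_p$ with any $\lesssim1$-controlled nullhomotopy $N_\ph$ of $\ph|_p$; such $N_\ph$ exists because $\Omega^*\Delta^{k+1}$ is acyclic with bounded isoperimetric constants and $\Dil(\ph)\le1$. Restricted to $\partial p\subset X^{(k)}$ this concatenation is $\lesssim1$-controlled in degrees $\le k$, since both $\Phi_k|_{X^{(k)}}$ (by the inductive hypothesis) and $N_\ph$ are, and concatenation preserves uniform control (Prop.~\ref{prop:cat}). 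On the southern hemisphere $D^S$ (on which $\tilde u=u$, a $\lesssim1$-Lipschitz map with boundary value $u_0=g_k|_{\partial p}$) I would extend this controlled boundary nullhomotopy over $D^S$ to a $\lesssim1$-controlled nullhomotopy of $u^*m_Y$: the relative obstructions of Prop.~\ref{prop:extHtpy} vanish degree by degree because $H^j(\Delta^{k+1},\partial\Delta^{k+1};V_j)\cong\tilde H^j(S^{k+1})=0$ for $j\le k$, and the quantitative relative form of that proposition, fed the bounded (relative) isoperimetric constants of $(\Delta^{k+1},\partial\Delta^{k+1})$ and the $\lesssim1$ input data, produces the required control. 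The two hemisphere pieces agree on $\partial p$ by construction, so they glue to the desired $\Theta$.

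Finally I evaluate $[\tilde u](v)=\pm\bigl(\int_{D^S}+\int_p\bigr)\bigl(\tilde u^*m_Y(v)+\int_0^1\Theta(dv)\bigr)$. The southern integral is $\lesssim1$ outright, since $u^*m_Y$ has dilatation $\lesssim1$ and $\Theta|_{D^S}$ is controlled in degrees $\le k$, so $\int_0^1\Theta(dv)$ is bounded there. For the northern integral, additivity of $\int_0^1$ under the explicit concatenation (Prop.~\ref{prop:cat}) gives $\int_p\int_0^1\Theta(dv)=\int_p\beta_k(dv)+\int_p\gamma(dv)$ with $\gamma:=\int_0^1 N_\ph$ controlled on $p$, while the defining relation for $\beta_k$ rearranges to $g_k^*m_Y(v)=\ph(v)-\beta_k(dv)-d\beta_k(v)$; adding these and using Stokes,
\[
  \int_{D^N}\!\Bigl(\tilde u^*m_Y(v)+{\textstyle\int_0^1}\Theta(dv)\Bigr)
  =\int_p\ph(v)-\int_p d\beta_k(v)+\int_p\gamma(dv)
  =-\int_{\partial p}\beta_k(v)+O(1),
\]
since $\int_p\ph(v)$ and $\int_p\gamma(dv)$ are both bounded. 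The conceptual crux is exactly this cancellation: the individually uncontrolled quantities $\int_p g_k^*m_Y(v)$ and $\int_p\beta_k(dv)$ --- both potentially large, because $g_k$ has not yet been controlled on $(k+1)$-cells --- cancel through the relation defining $\beta_k$, leaving only a boundary term over $\partial p\subset X^{(k)}$. Combining, $[\tilde u](v)=\pm\int_{\partial p}\beta_k(v)+O(1)=\pm\langle\delta b,p\rangle(v)+O(1)$, which is the lemma. The main technical obstacle is the gluing in the second paragraph --- checking that the concatenation commutes with restriction to $\partial p$, that the relative quantitative obstruction lemma applies with uniformly bounded constants, and that the obstruction classes vanish throughout $j\le k$ --- together with the bookkeeping of sign and orientation conventions (the $\pm$ above and in the period formula), the statement of the lemma being normalized so that the sign comes out as $+\langle\delta b,p\rangle$.
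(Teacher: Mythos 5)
Your proof is correct and follows essentially the same route as the paper's: express $[\tilde u]$ as the degree-$(k+1)$ obstruction in a partial algebraic homotopy built from $\Phi_k|_p$ on the northern hemisphere and a uniformly controlled extension on the southern one, then recognize the northern contribution as $\pm\int_p d\beta_k(v)$ and apply Stokes. The only packaging difference is that the paper compares $\tilde u^*m_Y$ against the algebraically nullhomotopic ``doubled'' map $\ph_{\text{glued}}$ (equal to $\ph|_p$ on each hemisphere) rather than against $0$, which lets the obstruction formula of Prop.~\ref{prop:extBoth} output $\int_p d\beta_k$ directly and avoids the extra concatenation with $N_\ph$ and the explicit cancellation of $\int_p\beta_k(dv)$ that your version requires.
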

  Therefore, the homotopy class of the map obtained by gluing together
  $g_{k+1}|_p$ and $u$ in a similar fashion is contained in a $\lesssim 1$-ball
  around $0 \in V_{k+1}^*$.  But since $\pi_{k+1}(Y) \to V_{k+1}^*$ is a
  homomorphism from a finitely generated group whose kernel is torsion, there are
  finitely many elements in this ball.
  \begin{proof}[Proof of the lemma.]
    In \S\ref{S:pi_k}, we described the real homotopy class of
    $\tilde u:S^{k+1} \to Y$ as the obstruction in $V_{k+1}^*$ to homotoping
    $\tilde u^*m_Y$ to zero.  But equivalently, it is the obstruction to
    homotoping it to any other algebraically nullhomotopic map, for example the
    map
    $\begin{smallmatrix}\text{\rotatebox[origin=c]{180}{$\ph$}}\\
      \ph\end{smallmatrix}$
    which restricts to $\ph|_p$ on each hemisphere.

    So we build such a homotopy $\Psi$ through degree $k$, then evaluate the
    obstruction to extending it to $V_{k+1}$.  On the northern hemisphere, we
    simply use $\Psi=\Phi_k|_p$.  On the southern hemisphere, since
    $\Dil_1(\Phi_k|_{\partial p}) \lesssim 1$, we can use Prop.~\ref{prop:extHtpy}
    to make sure $\Dil_1(\Psi) \lesssim 1$.

    Now the obstruction to extending to $V_{k+1}$ is given, according to
    Prop.~\ref{prop:extBoth}, by
    $$\bigl[-\begin{smallmatrix}\text{\rotatebox[origin=c]{180}{$\ph$}}\\
      \ph\end{smallmatrix}|_{V_{k+1}}+\tilde u^*m_Y|_{V_{k+1}}
      +{\textstyle\int_0^1} \Psi d|_{V_{k+1}}\bigr] \in V_{k+1}^*.$$
    Analyzing this form separately on each hemisphere, we get that this class is
    the sum of the class sending $v \in V_{k+1}$ to
    $$\int_p\bigl(\ph(v)-g_k^*m_Y(v)-{\textstyle \int_0^1 \Phi_k(dv)}\bigr)=
    \int_p d\beta_k(v)$$
    on the northern hemisphere and a $\lesssim 1$ error coming from the southern
    hemisphere.  Thus by Stokes' theorem, it is within $\lesssim 1$ of
    $\langle \delta b, p \rangle$.
  \end{proof}

  This allows us to fix a new shard complex
  $Z_{k+1}=Z_k \cup Z^{(k+1)} \cup \mathcal{F}_{k+1}$, where $\mathcal{F}_{k+1}$ is
  the finite set of restrictions to $(k+1)$-cells we have produced.

  It remains to define the homotopy $\Phi_{k+1}$.  We do this by applying a
  restriction to a second-order homotopy.  Let $\pi:X \times [0,1] \to X$ be the
  obvious projection.  Then we will construct a homotopy
  $$\Psi_{k+1}:\mathcal{M}_Y^* \to \Omega^*(X \times [0,1]) \otimes
  \mathbb{R}\langle s,ds \rangle$$
  between $H_{k+1}$ and $\pi^*\ph$ such that $\Psi_{k+1}|_{t=0}=\Phi_k$ and such
  that $\Phi_{k+1}:=\Psi_{k+1}|_{t=1}$ has the properties we desire.  Accordingly,
  we will use the notations $\Phi_{k+1}$ and $\Psi_{k+1}|_{t=1}$ interchangeably.

  We build this homotopy by induction on the degree of indecomposables of
  $\mathcal{M}_Y^*$.  The crucial step is in degree $k+1$, since this is where we
  do not yet have control but need to establish it; thus we split the
  construction into ``before'', ``during'', and ``after''.

  \subsubsection*{For $v \in V_i$, $i \leq k$}
  In low degrees, we further induct on skeleta.  First we set
  \begin{align*}
    \Psi_{k+1}(v)|_{X \times \{0\}} &= \Phi_k(v) \\
    \Psi_{k+1}(v)|_{X^{(k-1)} \times [0,1]} &= \pi^*\Phi_k(v)|_{X^{(k-1)}} \\
    \Psi_{k+1}(v)|_{X^{(k)} \times \{1\}} &= \Phi_k(v).
  \end{align*}

  Over cells of the form $q \times [0,1]$ where $q$ is a $k$-simplex of $X$, we
  can extend in an arbitrary way by the usual Poincar\'e lemma.

  Over cells of the form $p \times \{1\}$ where $p$ is a $(k+1)$-simplex, we have
  promised to control the size of the homotopy, which is part of $\Phi_{k+1}$.
  Recall that $\beta_k=\int_0^1 \Phi_k$.  For $v \in V_i$, $i \leq k$, we let
  $$\Phi_{k+1}(v)=g_{k+1}^*m_Y(v)
  +d(\beta_{k+1}(v) \otimes s)+{\textstyle \int_0^s \Phi_{k+1}(dv)};$$
  we would like to define such a $\beta_{k+1}$ on $V_i$ which is bounded on
  $X^{(k+1)}$ and such that
  $$d\beta_{k+1}(v)=\ph(v)-g_{k+1}^*m_Y(v)-{\textstyle \int_0^1 \Phi_{k+1}(dv)}.$$
  By induction, $d\beta_{k+1}=d\beta_k$ on $X^{(k)}$ and
  $$\lVert d\beta_{k+1}|_{X^{(k+1)}} \rVert_{\mathrm{op}} \lesssim 1.$$
  By the second quantitative Poincar\'e lemma, we can therefore extend
  $\beta_k|_{X^{(k)}}$ to a $\beta_{k+1}|_{X^{(k+1)}}$ with
  $$\lVert \beta_{k+1}|_{X^{(k+1)}} \rVert_{\mathrm{op}} \lesssim 1.$$
  Since $i \leq k$ all such choices differ by coboundaries.

  On all higher cells, we can once again extend in an arbitrary way by the usual
  Poincar\'e lemma.

  \subsubsection*{For $v \in V_{k+1}$}
  We need to ensure that $\Phi_{k+1}|_{V_{k+1}}$ has low dilatation on $(k+1)$-cells
  of $X$.  As before, we will set
  $$\Phi_{k+1}(v)=g_{k+1}^*m_Y(v)
  +d(\beta_{k+1}(v) \otimes s)+{\textstyle \int_0^s \Phi_{k+1}(dv)}$$
  where $d\beta_{k+1}(v)=\ph(v)-g_{k+1}^*m_Y(v)-{\textstyle\int_0^1\Phi_{k+1}(dv)}$.
  Specifically, we determine $\beta_{k+1}$ as follows:
  \begin{itemize}
  \item Take $\beta_{k+1}|_q$ to be the volume form times a bump function scaled
    so that
    $$\int_q \beta_{k+1}(v)=\int_q \beta_k(v)-\langle b,q \rangle(v).$$
  \item Use the second quantitative Poincar\'e lemma to extend $\beta_{k+1}(v)$
    to $p \times \{1\}$, for every $(k+1)$-simplex $p$, such that
    $d\beta_{k+1}(v)$ is as desired and
    $\lVert \beta_{k+1}|_{X^{(k+1)}} \rVert_{\mathrm{op}} \lesssim 1$.
  \item Extend arbitrarily to higher skeleta by the usual Poincar\'e lemma.
  \end{itemize}

  By Prop.~\ref{prop:extBoth}, the obstruction to extending this to a definition
  of $\Psi_{k+1}(v)$ is given by a class in
  $H^{k+1}(X \times [0,1],X \times \{0,1\};V)$ defined by
  $$O(v)=\bigl(\pi^*\ph(v)-H_{k+1}^*m_Y(v)-{\textstyle \int_0^1 \Psi_{k+1}(dv)},
  \beta_k(v) \oplus \beta_{k+1}(v)\bigr).$$
  In other words, we can get such an extension if there is a form
  $B(v) \in \Omega^k(X \times [0,1])$ such that
  $O(v)=(dB(v),B(v)|_{X \times \{0,1\}})$.

  In fact, we can find such a $B$ with $B(v)=0$ on the $(k-1)$-skeleton of $X$.
  By the Poincar\'e lemma, it is enough that $B$ satisfy Stokes' theorem, in
  other words that for $q$ a $k$-simplex of $X$,
  $$\int_{q \times [0,1]} dB(v)=\int_q \beta_{k+1}(v)-\int_q \beta_k(v).$$
  Therefore, we just need to show that
  $\int_{q \times [0,1]} dB(v)=-\langle b,q \rangle(v)$.

  To do this, notice that both $H_{k+1}$ and $dB(v)$ factor through the map
  $q \times [0,1] \to S^{k+1}$ which identifies $q \times \{0\}$ with
  $q \times \{1\}$ and flattens $\partial q \times [0,1]$ to $\partial q$.
  Thus $\int_{q \times [0,1]} dB$ is the obstruction to homotoping $H_{k+1}^*m_Y$
  to $\pi^*(\ph|_q)$ in this quotient; the latter is algebraically nullhomotopic
  since it factors through $\Omega^*(D^k)$.  By the construction of $H_{k+1}$,
  this is $-\langle b,q \rangle$.

  \subsubsection*{For $v \in V_i$, $i>k+1$}
  Finally, we extend to $\mathcal{M}^*_Y$ by applying the relative obstruction
  lemma Prop.~\ref{prop:extBoth} to the diagram
  $$\xymatrix{
    \mathcal{M}^*_Y(k+1) \ar[r]^-{\Psi_{k+1}} \ar@{^{(}->}[d] &
    \Omega^*(X \times [0,1]) \otimes \mathbb{R}\langle s,ds \rangle
    \ar[d]^{t=0} \ar[rd] \\
    \mathcal{M}^*_Y \ar[r]^-{\Phi_k} \ar@{-->}[ru] &
    \Omega^*X \otimes \mathbb{R}\langle s,ds \rangle \ar@{=}[r] &
    \Omega^*X \otimes \mathbb{R}\langle s,ds \rangle,
  }$$
  in which the middle vertical arrow is a quasi-isomorphism.  This completes the
  construction of $\Phi_{k+1}$ and the inductive step.  When $k=\dim X$, the
  result is the statement of the theorem.
\end{proof}

\section{Applications}

\subsection{Distortion and growth} \label{S:D&G}

In theory, our results reduce Conjectures \ref{conj:dist} and \ref{conj:growth}
to purely algebraic questions about the homotopy theory of maps between algebras
of forms.  In reality, however, it is not clear whether these questions are any
easier to answer than the geometric questions they come from.  In this section,
we give some examples of geometric constructions that confirm Conjecture
\ref{conj:dist} for certain types of spaces, as well as a first attempt at a
general theorem using our machinery.  First, however, there is the following
result, which is almost a triviality given the shadowing principle:
\begin{thm} \label{thm:triv}
  Let $X$ be an $n$-dimensional simplicial complex with the standard simplexwise
  metric and $Y$ a simply connected finite complex.  Then there is $C(n,Y)$ such
  that if $\alpha,\beta \in [X,Y]$ are homotopy classes which are the same
  rationally, then
  $\lVert\alpha\rVert_{\mathrm{Lip}} \leq C(\lVert\beta\rVert_{\mathrm{Lip}}+1)$.
\end{thm}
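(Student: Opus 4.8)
The plan is to read this off directly from the shadowing principle (Theorem~\ref{thm:main}), applied with empty subcomplex $A=\emptyset$. Since $Y$ is a finite complex rather than a manifold, I would first replace it by a compact Riemannian manifold with boundary that is Lipschitz homotopy equivalent to it, as permitted by the remark following Theorem~\ref{thm:main}; fix once and for all a minimal model $m_Y\colon\mathcal M_Y^{*}\to\Omega^{*}Y$ and norms on its spaces of indecomposables. Now set $L:=\lVert\beta\rVert_{\mathrm{Lip}}$, choose an $(L+1)$-Lipschitz representative $g\colon X\to Y$ of $\beta$, and let $f\colon X\to Y$ be an arbitrary representative of $\alpha$. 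Put $\ph:=g^{*}m_Y\colon\mathcal M_Y^{*}\to\Omega^{*}X$.

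Next I would verify the four hypotheses of Theorem~\ref{thm:main} for the pair $(f,\ph)$ with $A=\emptyset$. Hypotheses (1) and (3) are vacuous. Hypothesis (2), that $f^{*}m_Y\simeq\ph=g^{*}m_Y$ as DGA homomorphisms, is precisely where rational equality of $\alpha$ and $\beta$ is used: a rational homotopy between the rationalizations of $f$ and $g$ base-changes to $\mathbb R$ and pushes forward along $\mathcal M_Y^{*}\to\Omega^{*}X$ to a DGA homotopy between $f^{*}m_Y$ and $g^{*}m_Y$ --- equivalently, in the language of \S3, $\alpha$ and $\beta$ have the same image under the algebraicization map $[X,Y]\to[\mathcal M_Y^{*},\Omega^{*}X]$. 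For hypothesis (4), the inequality $\Dil(f^{*}m_Y)\le C\Lip f$ recorded in \S3 for pullbacks of the minimal model gives $\Dil(\ph)=\Dil(g^{*}m_Y)\le C_0\,\Lip g\le C_0(L+1)$, with $C_0$ depending only on $m_Y$ and the chosen norms; so (4) holds after replacing $L$ by $L':=C_0(L+1)$.

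Theorem~\ref{thm:main} then supplies a map $h\colon X\to Y$ homotopic to $f$ --- hence again a representative of $\alpha$ --- with $\Lip h\le C_1(L'+1)=C_1\bigl(C_0(L+1)+1\bigr)\le C(L+1)$ for a constant $C=C(n,Y)$. Consequently $\lVert\alpha\rVert_{\mathrm{Lip}}\le\Lip h\le C\bigl(\lVert\beta\rVert_{\mathrm{Lip}}+1\bigr)$, which is the assertion.

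There is essentially no obstacle here once Theorem~\ref{thm:main} is available --- which is why the statement is, as advertised, ``almost a triviality given the shadowing principle.'' The only points worth a careful sentence are the passage to a manifold model of $Y$ (so that the hypotheses of Theorem~\ref{thm:main} literally make sense) and the translation of ``the same rationally'' into the homotopy statement $f^{*}m_Y\simeq g^{*}m_Y$; for the latter we only need the easy direction, namely that a rational (or even integral) homotopy induces a homotopy of the associated real DGA maps, and never any converse.
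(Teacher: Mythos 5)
Your proof is correct and follows essentially the same route as the paper's: thicken $Y$ to a compact manifold, take a near-optimal representative of $\beta$ and an arbitrary representative of $\alpha$, and apply the shadowing principle with $A=\emptyset$, using the fact that rational equality of $\alpha$ and $\beta$ yields the required DGA homotopy between the two pullbacks of $m_Y$. If anything you are slightly more explicit than the paper in verifying the four hypotheses of Theorem~\ref{thm:main} and in noting that only the easy direction of algebraicization (integral/rational homotopy implying DGA homotopy over $\mathbb{R}$) is needed.
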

The remarkable aspect is that this constant does not depend on the particular
rational homotopy class or even on the topology of $X$, but only on its bounded
geometry.  This is although the \emph{number} of distinct homotopy classes within
a rational homotopy class may be unbounded, even for a fixed $X$; see
\cite{IRMC} for examples of this phenomenon.
\begin{proof}
  To apply the shadowing principle, we need $Y$ to be a Riemannian manifold with
  boundary.  So we embed $Y$ in some $\mathbb{R}^N$ and thicken it up to a
  manifold $Y^\prime$.  The map $Y \hookrightarrow Y^\prime$ is a Lipschitz
  homotopy equivalence, so this affects the Lipschitz norm of homotopy classes
  only by a multiplicative constant $C(Y \hookrightarrow Y^\prime)$.

  Let $f:X \to Y^\prime$ be a (near-)optimal representative of $\beta$ and
  $g:X \to Y^\prime$ some representative of $\alpha$.  Choose a minimal model
  $m_Y:\mathcal{M}^*_Y \to \Omega^*Y^\prime$.  The shadowing principle allows us
  to deform $g$ to a map $\tilde g$ such that $\tilde g^*m_Y$ is near $f^*m_Y$;
  in particular $\Lip(\tilde g) \leq C(n,Y,m_Y)(\Lip(f)+1)$.
\end{proof}


\subsubsection*{Universal constructions}

There are a number of cases in which the distortion of elements of homotopy
groups can be determined geometrically, without using the machinery introduced in
this paper.  Gromov originally noted in \cite{GrHED} that given a map
$f:S^{2n-1} \to S^n$ with nonzero Hopf invariant $h$ and Lipschitz constant $L$, a
map with Hopf invariant $k^{2n}h$ and Lipschitz constant $\lesssim kL$ can be
produced by composing with a self-map
$$S^{2n-1} \xrightarrow{f} S^n \xrightarrow{\deg=k^n} S^n.$$

More generally, a large number of homotopy group elements can be represented by
the following universal construction.  Given spheres $S^{n_1},\ldots,S^{n_r}$,
their product can be given a cell structure with one cell for each subset of
$\{1,\ldots,r\}$.  Define their \emph{fat wedge} $\mathbb{V}_{i=1}^r S^{n_r}$ to be
this cell structure without the top face.  Let $N=-1+\sum_{i=1}^r n_i$, and let
$\tau:S^N \to \mathbb{V}_{i=1}^r S^{n_r}$ be the attaching map of the missing face.
By definition, $\alpha \in \pi_N(Y)$ is contained in the
\emph{$r$th-order Whitehead product} $[\alpha_1,\ldots,\alpha_r]$, where
$\alpha_i \in \pi_{n_i}(Y)$, if it has a representative which factors through a
map
$$S^N \xrightarrow{\tau} \mathbb{V}_{i=1}^r S^{n_i} \xrightarrow{f_\alpha} Y$$
such that $[f_\alpha|_{S^{n_i}}]=\alpha_i$.  Note that there are many potential
indeterminacies in how higher-dimensional cells are mapped, so
$[\alpha_1,\ldots,\alpha_r]$ is a set of homotopy classes rather than a unique
class.\footnote{See \cite{AA} for the relationship between generalized Whitehead
  products and Sullivan minimal models.}

Nevertheless, as long as each of the $n_i \geq 2$, any class in this set has
distortion $O(k^{1/(N+1)})$, for the following reason.  Let
$\sigma_i:S^{n_i} \to S^{n_i}$ be an $O(L)$-Lipschitz map of degree $L^{n_i}$.  Then
the product of the $\sigma_i$'s induces a self-map of the
fat wedge which has degree $L^{N+1}$ on the missing cell.  Since the fat wedge is
simply connected, the relative Hurewicz theorem gives an isomorphism
$$\pi_{N+1}\bigl({\textstyle\prod_{i=1}^r} S^{n_r},\mathbb{V}_{i=1}^r S^{n_r}\bigr)
\xrightarrow{\simeq}
H_{N+1}\bigl({\textstyle\prod_{i=1}^r} S^{n_r},\mathbb{V}_{i=1}^r S^{n_r}\bigr).$$
Thus the composition
$$S^N \xrightarrow{\tau} \mathbb{V}_{i=1}^r S^{n_r} \xrightarrow{\prod \sigma_i}
\mathbb{V}_{i=1}^r S^{n_r} \xrightarrow{f_\alpha} Y$$
gives us an $O(L)$-Lipschitz representative of $L^{N+1}\alpha$.

This class of examples has not been described in detail before, but it is not
original to this paper.  It was mentioned by Gromov in \cite{GrQHT} and it also
provides the tools to prove the following observation of Shmuel Weinberger:
\begin{thm}
  The following are equivalent for a finite, simply connected complex $Y$:
  \begin{enumerate}[label={(\roman*)}]
  \item All rationally nontrivial elements of $\pi_*(Y)$ are undistorted.
  \item The rational Hurewicz map $\pi_*(Y) \otimes \mathbb{Q} \to
    H_*(Y;\mathbb{Q})$ is injective.
  \item $Y$ is rationally equivalent to a product of odd-dimensional spheres.
  \end{enumerate}
\end{thm}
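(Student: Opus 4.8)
The plan is to establish the two equivalences (ii) $\Leftrightarrow$ (iii) and (i) $\Leftrightarrow$ (ii) separately. For (ii) $\Leftrightarrow$ (iii) I would work entirely inside the minimal model $\mathcal{M}_Y=\bigwedge V_*$: the rational Hurewicz map $\pi_n(Y)\otimes\mathbb{Q}\cong V_n^*\to H_n(Y;\mathbb{Q})$ is dual to the linearization $H^n(\mathcal{M}_Y)\to V_n$ sending a cocycle class to its indecomposable part, which is well defined because every element of degree $<n$, and hence every coboundary in degree $n$, is decomposable. Thus (ii) is equivalent to surjectivity of this linearization in every degree. If $Y$ is rationally a product of odd spheres then $\mathcal{M}_Y$ is exterior on odd generators with $d=0$, so every generator is a cocycle and the linearization is onto, giving (iii) $\Rightarrow$ (ii). Conversely, assuming the linearization is always onto I would induct on $n$ to prove $d|_{V_n}=0$: the inductive hypothesis makes $\mathcal{M}_Y(n-1)$ a free DGA with zero differential, so $H^{n+1}(\mathcal{M}_Y(n-1))=\mathcal{M}_Y(n-1)^{n+1}$; surjectivity in degree $n$ yields, for each $v\in V_n$, a decomposable $w\in\mathcal{M}_Y(n-2)$ with $dv=-dw$, forcing $[dv]=0$ and hence $dv=0$. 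Then $d\equiv0$, $H^*(Y;\mathbb{Q})=\bigwedge V_*$ is free graded-commutative, and finite-dimensionality forces all generators into odd degrees, which is (iii).

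For the implication (ii) $\Rightarrow$ (i) I would use the $L^\infty$ bound on pullbacks of forms, just as in the Hopf-invariant discussion in the introduction. If $\alpha\in\pi_n(Y)$ is rationally nontrivial then by (ii) its Hurewicz image is nonzero in $H_n(Y;\mathbb{Q})$, so, thickening $Y$ to a Riemannian manifold, there is a closed $n$-form $\omega$ with $\langle[\omega],h(\alpha)\rangle=c\neq0$. For any $L$-Lipschitz $f:S^n\to Y$ representing $k\alpha$ we get $|kc|=\bigl|\int_{S^n}f^*\omega\bigr|\le\vol(S^n)\lVert\omega\rVert_\infty L^n$, so $\Lip f=\Omega(k^{1/n})$; together with the universal upper bound $\delta_\alpha(k)=O(k^{1/n})$ this says $\alpha$ is undistorted, which is (i).

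The substantive direction is (i) $\Rightarrow$ (ii), which I would prove in contrapositive form. Suppose the rational Hurewicz map is not injective; I must produce a rationally nontrivial distorted element. The minimal-model analysis above shows that some generator $v\in V_n$ has $dv$ equal to a nonzero decomposable that is not a coboundary in $\mathcal{M}_Y(n-1)$. Via the dictionary between the differential of the minimal model and generalized Whitehead products \cite{AA}, the kernel of the rational Hurewicz map is spanned by rational $r$th-order Whitehead products with $r\ge2$; since this kernel is nonzero, at least one individual such product $[\bar\alpha_1,\dots,\bar\alpha_r]$, with all $n_i\ge2$, contains a nonzero class. Clearing denominators in the $\bar\alpha_i$ and then multiplying each factor by a common multiple of the finitely many orders of the now-torsion proper sub-products, I obtain integral $\alpha_i\in\pi_{n_i}(Y)$ whose proper sub-products vanish on the nose, so the honest integral Whitehead product $[\alpha_1,\dots,\alpha_r]\subseteq\pi_N(Y)$ is defined, where $N=-1+\sum_i n_i$, and every element $\beta$ of it is rationally nontrivial (its image lies in a coset of the rational indeterminacy that does not contain $0$). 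The universal construction described just above in the paper --- composing the attaching map $\tau:S^N\to\mathbb{V}_{i=1}^r S^{n_i}$ with the self-map $\prod_i\sigma_i$ of degree $L^{N+1}$ on the missing cell --- then gives $\delta_\beta(k)=O(k^{1/(N+1)})$; since $N+1>N=\deg\beta$ this is strictly below $\Theta(k^{1/N})$, so $\beta$ is a rationally nontrivial distorted element, contradicting (i).

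The hard part will be this last step: converting the purely rational fact that the Hurewicz kernel is nonzero into an \emph{honest integral} higher-order Whitehead product of order $\ge2$ on which the fat-wedge construction can act. This requires both the Andrews--Arkowitz correspondence --- to be sure the kernel is detected by genuine higher Whitehead products rather than by secondary phenomena the construction cannot realize --- and careful torsion bookkeeping, so that the relevant sub-products vanish exactly after rescaling while the top product stays rationally nonzero. Everything else, including the two cohomology/minimal-model computations, is routine.
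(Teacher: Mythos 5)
Your treatment of (ii) $\Leftrightarrow$ (iii) via the minimal model and of (ii) $\Rightarrow$ (i) via pairing with a closed form are both correct and essentially the arguments one would give; the paper itself only sketches these steps.

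The gap is in (i) $\Rightarrow$ (ii), where you assert that ``the kernel of the rational Hurewicz map is spanned by rational $r$th-order Whitehead products with $r \geq 2$.'' This is false in general, and the paper itself gives the counterexample: the generator of $\pi_{10}(\mathbf{NF}) \otimes \mathbb{Q}$ lies in the Hurewicz kernel (as $\mathbf{NF}$ is $8$-dimensional) and is explicitly \emph{not} a generalized Whitehead product. The Andrews--Arkowitz dictionary does not give what you want degree by degree; it only identifies the ``primary'' part of the $k$-invariant with Whitehead products, and higher terms of the differential can encode secondary rational homotopy operations which the fat-wedge construction cannot realize directly. The paper sidesteps this by citing [thesis, Lemma~5.2], whose content is precisely that the \emph{lowest-dimensional} nonzero element of the rational Hurewicz kernel is always a generalized Whitehead product --- in the lowest degree there is no room for secondary phenomena, and the relevant lower sub-products vanish rationally by minimality. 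Your argument never restricts to that lowest degree, so it does not go through as written. The torsion bookkeeping you flag (clearing denominators so that sub-products vanish integrally while the top product remains rationally nonzero) is a genuine, though secondary, concern; the restriction to the lowest degree also helps there, since it makes all relevant lower-dimensional Whitehead products rationally trivial from the start.
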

\noindent This follows from the fact that the lowest-dimensional nonzero element
in the kernel of the rational Hurewicz map is always a generalized Whitehead
product.  This is shown in \cite[Lemma 5.2]{thesis}.

There are also more subtle examples of similar constructions.  One of the
simplest examples of a rational homotopy class which is not a generalized
Whitehead product is contained in the $\pi_{10}$ of an 8-dimensional, four-cell
CW complex constructed as follows.  Ordinary Whitehead products satisfy the
relations for a Lie bracket: in particular, they are bilinear and satisfy the
Jacobi identity.  This can be demonstrated via universal topological
constructions, as in \cite{NaTo}.  In particular, the rational homotopy groups of
$S^3 \vee S^3$ are a free Lie algebra whose Lie bracket is the Whitehead product,
generated by the identity maps of the two spheres, which we call $f$ and $g$.  So
we attach two $8$-cells killing
$$\pi_7(S^3 \vee S^3) \otimes \mathbb{Q} \cong
\langle [[f,g],f],[[f,g],g] \rangle,$$
to get a space $\mathbf{NF}$.  (This stands for ``non-formal'', as $\mathbf{NF}$
is also one of the simplest examples of a space which is not formal in the sense
of Sullivan.)  Then
$\pi_{10}(\mathbf{NF}) \otimes \mathbb{Q}$ is generated by a single element.
This can be seen by constructing the first few levels of the minimal model
$$\mathcal{M}_{\mathbf{NF}}^*=\langle x_1^{(3)},x_2^{(3)},y^{(5)},T^{(10)},\ldots \mid
dx_i=0, dy=x_1x_2, dT=x_1x_2y, \ldots \rangle,$$
but we also give an explicit generator.
\begin{lem}
  An explicit generator $\tau:S^{10} \to \mathbf{NF}$ for $\pi_{10}(\mathbf{NF})$
  is given by the following sequence of homotopies between maps
  $S^9 \to \mathbf{NF}$, coned off at both ends:
  $$* \xlongequal{} [*_{S^7},f] \xlongequal{[\text{nullh.},f]} [[[2f,g],g],f]
  \xlongequal{\text{Jacobi}} [[[2f,g],f],g] \xlongequal{[\text{nullh.},g]}
  [*_{S^7},g] \xlongequal{} *.$$
\end{lem}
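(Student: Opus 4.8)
The plan is to work rationally, via the long exact homotopy sequence of the pair $(\mathbf{NF},W)$ with $W=S^3\vee S^3\subset\mathbf{NF}$, together with a little free Lie algebra bookkeeping. Write $a=[[f,g],f]$ and $b=[[f,g],g]$ in $\pi_7(W)$ for the two attaching maps, and $\tilde e_a,\tilde e_b\in\pi_8(\mathbf{NF},W)$ for the characteristic classes of the two $8$--cells, so that $\partial\tilde e_a=a$ and $\partial\tilde e_b=b$. Since $\pi_*(W)\otimes\mathbb{Q}$ is the free graded Lie algebra on $f,g$ in degree $3$ under the Whitehead product, it vanishes in even degrees; in particular $\pi_{10}(W)\otimes\mathbb{Q}=0$, so $j\colon\pi_{10}(\mathbf{NF})\otimes\mathbb{Q}\to\pi_{10}(\mathbf{NF},W)\otimes\mathbb{Q}$ is injective with image $\ker\bigl(\partial\colon\pi_{10}(\mathbf{NF},W)\otimes\mathbb{Q}\to\pi_9(W)\otimes\mathbb{Q}\bigr)$. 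By a relative Hilton--Milnor argument --- equivalently, by computing $H_9$ of the Quillen (rational DG Lie) model $\mathbb{L}(u,v,w_a,w_b)$ of $\mathbf{NF}$, with $|u|=|v|=2$, $|w_a|=|w_b|=7$, $du=dv=0$, $dw_a=[[u,v],u]$, $dw_b=[[u,v],v]$ --- the group $\pi_{10}(\mathbf{NF},W)\otimes\mathbb{Q}$ has basis the four relative Whitehead products $[\tilde e_a,f],[\tilde e_a,g],[\tilde e_b,f],[\tilde e_b,g]$. I would then check that $\partial[\tilde e_a,g]=[[[f,g],f],g]$ equals $\partial[\tilde e_b,f]=[[[f,g],g],f]$ in $\pi_9(W)\otimes\mathbb{Q}$, by the graded Jacobi identity together with the vanishing of the rational Whitehead square $[[f,g],[f,g]]$ (it is $2$--torsion), while $\partial[\tilde e_a,f],\partial[\tilde e_a,g],\partial[\tilde e_b,g]$ form a basis of the weight--$4$ part of the free Lie algebra. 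Hence $\ker\partial=\mathbb{Q}\cdot\bar\tau$ with $\bar\tau=[\tilde e_a,g]-[\tilde e_b,f]$, and $\pi_{10}(\mathbf{NF})\otimes\mathbb{Q}=\mathbb{Q}\cdot\bar\tau$ is one--dimensional.

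It then remains to match the displayed element $\tau$ with $\bar\tau$. Read as a based loop at the constant map in $\Map_*(S^9,\mathbf{NF})=\Omega^9\mathbf{NF}$, the chain of homotopies represents a class in $\pi_1(\Omega^9\mathbf{NF})=\pi_{10}(\mathbf{NF})$, adjoint to the asserted $\tau\colon S^{10}\to\mathbf{NF}$. The homotopy $[*_{S^7},f]\simeq[[[2f,g],g],f]$ is obtained by applying the (continuous) Whitehead--product--with--$f$ operation to a nullhomotopy in $\mathbf{NF}$ of $[[2f,g],g]=2b$; such a nullhomotopy represents $2\tilde e_b\in\pi_8(\mathbf{NF},W)$, so this arc --- running from the constant map to the map $[2b,f]\colon S^9\to W$ --- is a representative of the relative Whitehead product $2[\tilde e_b,f]$, and symmetrically $[[[2f,g],f],g]\simeq *$ represents $-2[\tilde e_a,g]$. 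The middle ``Jacobi'' arc is a homotopy from $[2b,f]$ to $[2a,g]$ staying inside $W$: it is the genuine Jacobi homotopy (cf.\ \cite{NaTo}), and the point of the factor $2$ is precisely that the correction term $[[g,f],[2f,g]]=-2[[f,g],[f,g]]=0$ vanishes on the nose, so no secondary term needs to be absorbed and the concatenation is a well--defined loop integrally. Since this middle arc lies in $W$, it contributes to $j(\tau)$ only through the (rationally trivial) image of $\pi_{10}(W)$; therefore $j(\tau)=2\bigl([\tilde e_b,f]-[\tilde e_a,g]\bigr)=-2\bar\tau\neq0$, and as $j$ is rationally injective with one--dimensional domain, $\tau$ is a rational generator of $\pi_{10}(\mathbf{NF})$.

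I expect the main obstacle to be exactly the bookkeeping of that second paragraph: confirming that ``bracket a nullhomotopy of $2b$ with $f$'' genuinely produces a representative of $2[\tilde e_b,f]$ with the correct sign, and that gluing the three arcs through the Jacobi bridge assembles to a nonzero rational multiple of $\bar\tau$ rather than some other element of $\ker\partial$. The cleanest way to organize this is in the Quillen model above, where $\tilde e_a,\tilde e_b$ correspond to $w_a,w_b$, the cycle $[w_a,v]-[w_b,u]$ is manifestly nonzero (a difference of distinct Lie monomials in a free Lie algebra) and is a cycle for precisely the Jacobi/Whitehead--square reason used above; what one must still invoke is the standard dictionary identifying this Quillen--model cycle with the topologically built $\tau$, i.e.\ the compatibility of relative (generalized) Whitehead products with the Quillen --- or, dually, Sullivan --- model (cf.\ \cite{AA}).
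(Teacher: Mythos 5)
Your proposal is correct in outline but takes a genuinely different route from the paper. The paper works entirely on the Sullivan side: it takes the rank-one statement $\pi_{10}(\mathbf{NF})\otimes\mathbb{Q}\cong\mathbb{Q}$ as given and shows $\langle T,\tau\rangle\neq0$ by splitting $\tau$ into its two hemispheres, factoring these through auxiliary complexes $Y_1=(S^3\vee S^3)\cup_{[[f,g],f]}e^8$, $Y_2=(S^3\vee S^3)\cup_{[[f,g],g]}e^8$ and $Z=(S^3\vee S^3)\cup_{[[[2f,g],f],g]}e^{10}$, computing $\iota_1^*T=-x_1z_2$ and $\iota_2^*T=x_2z_1$ in the respective minimal models, and observing that $x_1z_2+x_2z_1$ is cohomologically nontrivial in $\mathcal{M}_Z^*$ because it pairs with the added $10$-cell. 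You instead work on the Quillen side: you pass to the long exact sequence of $(\mathbf{NF},W)$, use the vanishing of $\pi_{10}(W)\otimes\mathbb{Q}$ (the free graded Lie algebra on two degree-$2$ generators has nothing in odd degree), identify $\pi_{10}(\mathbf{NF},W)\otimes\mathbb{Q}$ with the span of the four relative Whitehead products, and exhibit $\ker\partial$ as the line $\mathbb{Q}\cdot([\tilde e_a,g]-[\tilde e_b,f])$, dual to the Quillen cycle $[w_a,v]-[w_b,u]$. This is exactly the Lie-theoretic mirror image of the paper's cohomology class $x_1z_2+x_2z_1$; the two detect the same phenomenon. Your approach buys a re-derivation of the rank-one statement as a byproduct and is arguably more conceptual, while the paper's avoids the two subtleties that remain in your second paragraph — namely, that $\pi_{10}(\mathbf{NF},W)$ sits just outside the Blakers--Massey excision range (so the ``basis of four relative Whitehead products'' statement genuinely requires the relative Hilton--Milnor/Quillen input, not just $\pi_{10}(S^8\vee S^8)\otimes\mathbb{Q}=0$), and that one must verify the three arcs assemble to $\pm2\bigl([\tilde e_b,f]-[\tilde e_a,g]\bigr)$ rather than to zero, including checking that the middle Jacobi arc, which lives in $\Omega^9W$, really contributes nothing to $j(\tau)$. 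You correctly flag both issues yourself; with those filled in, the argument is sound and independent of the paper's.
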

Note that the Jacobi identity takes this form since $[[f,g],[f,g]]$ has order 2
for degree reasons.
\begin{proof}
  Since $\pi_{10}(\mathbf{NF})$ has rank 1, we just need to show that the given
  map pairs nontrivially with $T$.

  We do this as follows.  Let
  \begin{align*}
    Y_1 &= (S^3 \vee S^3) \cup_{[[f,g],f]} e^8 \\
    Y_2 &= (S^3 \vee S^3) \cup_{[[f,g],g]} e^8 \\
    Z &= (S^3 \vee S^3) \cup_{[[[2f,g],f],g]} e^{10},
  \end{align*}
  and let $\iota_i:Y_i \to \mathbf{NF}$, $i=1,2$, be the obvious inclusions.
  Then there is a map $f_1:Z \to Y_1$ which is the identity on the $3$-skeleton
  and sends the $10$-cell to $Y_1$ via the right hemisphere of $\tau$.
  Similarly, there is a map $f_2:Z \to Y_2$ which acts on the $10$-cell via the
  left hemisphere of $\tau$.  We would like to show that $\iota_1 \circ f_1$ and
  $\iota_2 \circ f_2$ are rationally distinct; in other words, that the two
  halves of $\tau$ are rationally non-homotopic nullhomotopies of
  $[[[2f,g],f],g]$.

  We argue via minimal models.  Through degree 8, we have
  \begin{align*}
    \mathcal{M}_{Y_1}^* &= \langle x_1^{(3)},x_2^{(3)},y^{(5)},z_2^{(7)},\ldots \mid
    dx_i=0, dy=x_1x_2, dz_2=x_2y, \ldots \rangle \\
    \mathcal{M}_{Y_2}^* &= \langle x_1^{(3)},x_2^{(3)},y^{(5)},z_1^{(7)},\ldots \mid
    dx_i=0, dy=x_1x_2, dz_1=x_1y, \ldots \rangle \\
    \mathcal{M}_Z^* &= \langle x_1^{(3)},x_2^{(3)},y^{(5)},z_1^{(7)},z_2^{(7)},\ldots
                      \mid dx_i=0, dy=x_1x_2, dz_i=x_iy, \ldots \rangle,
  \end{align*}
  with more generators in degree 9; clearly, the maps
  $f_i^*:\mathcal{M}_{Y_i}^* \to \mathcal{M}_Z^*$ must send the generators $x_i$,
  $y$, and $z_i$ to themselves.

  Likewise, the map $\iota_1^*:\mathcal{M}_{\mathbf{NF}}^* \to \mathcal{M}_{Y_1}^*$
  sends $x_i$ and $y$ to themselves.  By obstruction theory, since
  $\pi_8(\mathbf{NF})$ is finite, this determines its homotopy class; after
  making a choice within this homotopy class, we can send $T \mapsto -x_1z_2$.
  Similarly, $\iota_2^*:\mathcal{M}_{\mathbf{NF}}^* \to \mathcal{M}_{Y_2}^*$ sends
  $T \mapsto x_2z_1$.

  Now, $x_1z_2+x_2z_1$ is cohomologically nontrivial in $\mathcal{M}_Z^*$ since it
  is dual to the added $10$-cell (see \cite[\S13(d) and (e)]{FHT} for more
  detail.)  This gives a rational obstruction to homotoping the maps
  $\iota_1 \circ f_1$ and $\iota_2 \circ f_2$.
\end{proof}

To demonstrate that this element is distorted, we exhibit a representative of
$L^{11}\tau$,
\begin{multline*}
  * \xlongequal{} [*_{S^7},L^3f] \xlongequal{[\text{nullh.},L^3f]}
  [[2L^5[f,g],L^3g],L^3f] \\
  \xlongequal{\text{Jacobi}} [[2L^5[f,g],L^3f],L^3g]
  \xlongequal{[\text{nullh.},L^3g]} [*_{S^7},L^3g] \xlongequal{} *,
\end{multline*}
keeping track of the sizes of the intermediate maps and their homotopies.
Clearly all the Whitehead products have Lipschitz constant at most $L$, including
the implicit third term of the Jacobi identity, $[2L^5[f,g],[L^3f,L^3g]]$.  Since
the Jacobi identity is given by a universal construction, it can be done in
linear space and time in terms of the Lipschitz constants of the entries.  The
nullhomotopy of $[2L^5[f,g],L^3g]$ can also be done in linear space and time
using the composition
$$D^8 \xrightarrow{\alpha} S^5 \times S^3 \xrightarrow{\sigma_5 \times \sigma_3}
S^5 \times S^3 \to \mathbf{NF},$$
where $\alpha$ is the attaching map of the top cell and $\sigma_5$ and $\sigma_3$
are maps of degree $2L^5$ and $L^3$, respectively.  Likewise with the rest of the
homotopies, which are also nullhomotopies of Whitehead products.

The trickiest part is finding an $L$-Lipschitz nullhomotopy of
$[2L^5[f,g],[L^3f,L^3g]]$, the third term of the Jacobi identity.  Note that the
bilinearity of the Whitehead product is also realized by a universal
construction; that is there is an $O(\ell)$-Lipschitz homotopy realizing the
relation
$$2^6[(\ell/2)^3 f,(\ell/2)^3 g] \simeq [\ell^3 f,\ell^3 g].$$
Suppose that $L$ is a power of $4$.  Then we can apply such homotopies repeatedly
to get
\begin{multline*}
  [2L^5[f,g],[L^3f,L^3g]] \simeq 2[[L^{5/2}f,L^{5/2}g],[L^3f,L^3g]] \\
  \simeq 2^7[[L^{5/2}f,L^{5/2}g],[(L^3/2)f,(L^3/2)g]] \simeq \cdots \simeq
  2L[[L^{5/2}f,L^{5/2}g],[L^{5/2}f,L^{5/2}g]].
\end{multline*}
The total amount of time this composition takes can be expressed as a geometric
series, and therefore it is also $O(L)$.

We have demonstrated $O(k^{1/11})$-Lipschitz representatives for $k\tau$ where $k$
is a power of $2^{22}$; this is sufficient to show that $\tau$ has distortion
$O(k^{1/11})$.  The analysis of the minimal model in \S\ref{S:pi_k} shows that
this is the best one can do.

Indeed, when one looks for homotopy group elements which are not generalized
Whitehead products, such ``nullhomotopies of Whitehead products in two different
ways'' come up naturally.  It seems possible that one can build universal models
for all rational homotopy classes (that is, all ``higher rational homotopy
operations'', as in \cite{Blanc}) by an inductive application of this method.
\begin{open}
  Can one prove Conjecture \ref{conj:dist} for all spaces by applying self-maps
  and similar geometric methods to inductively built models?
\end{open}

\subsubsection*{Symmetric spaces}

Our universal constructions generalize Gromov's Hopf invariant example in one
direction; we also generalize it in another, to a more general class of spaces
that have self-maps with the right properties.
\begin{thm} \label{thm:sym}
  Suppose that the finite complex $Y$ has the rational homotopy type of a
  Riemannian symmetric space.  Then for any $\alpha \in \pi_n(Y)$, the distortion
  function is $\Theta(k^{1/(n+1)})$ if $\alpha$ is in the kernel of the Hurewicz
  map and $\Theta(k^{1/n})$ otherwise.
\end{thm}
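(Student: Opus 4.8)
The plan is to prove matching upper and lower bounds, split according to whether $\eta_n(\alpha)$ vanishes. The case $\eta_n(\alpha)\neq 0$ needs no special structure and both bounds are essentially quoted from the introduction and from Gromov's volume argument: the upper bound $\delta_\alpha(k)=O(k^{1/n})$ holds for any simply connected target by precomposing a fixed representative of $\alpha$ with a degree-$k$ self-map of $S^n$ of Lipschitz constant $O(k^{1/n})$, and the lower bound $\delta_\alpha(k)=\Omega(k^{1/n})$ follows because, choosing a closed form $\omega$ on $Y$ with $\langle[\omega],\eta_n(\alpha)\rangle\neq 0$, any $L$-Lipschitz representative $f$ of $k\alpha$ has $k\langle[\omega],\eta_n(\alpha)\rangle=\int_{S^n}f^*\omega\leq\|\omega\|_\infty L^n\vol(S^n)$. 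So the content of the theorem is the case where $\alpha$ is rationally nontrivial and $\eta_n(\alpha)=0$, where the exponent improves to $1/(n+1)$, and this is the only place the symmetric-space hypothesis enters.

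For the lower bound $\delta_\alpha(k)=\Omega(k^{1/(n+1)})$ in that case, I would compute the homotopy period detecting $\alpha$ on an $L$-Lipschitz representative $f\colon S^n\to Y$, following \S\ref{S:pi_k}. Since $Y$ has the rational homotopy type of a compact symmetric space, it is formal and rationally elliptic, and the relevant sub-DGA of $\mathcal M_Y^*$ is a pure Sullivan algebra with even generators $x_i^{(2a_i)}$, closed, and odd generators $y_j^{(2b_j-1)}$ with each $dy_j$ a polynomial $P_j$ in the $x_i$; the $y_j$ are exactly the rational Hurewicz kernel, so $\alpha$ is dual to some $y_{j_0}$, and $n=2b_{j_0}-1$. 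Purity is the key: the obstruction on $y_{j_0}$ only involves $f^*m_Y(x_i)$, its antiderivatives (controlled by Lemma \ref{lem:IP} with $\|f^*m_Y(x_i)\|_\infty\lesssim L^{2a_i}$), and $f^*m_Y(y_{j_0})$, it is a universal expression of total degree $n$, and hence of $L^\infty$-norm $O(L^{\deg y_{j_0}+1})=O(L^{n+1})$ with no cancellations of the $\mathbf{NF}$ type needed; meanwhile the lower-degree obstructions vanish for dimension reasons on $S^n$. A representative of $k\alpha$ has period of size $\sim k$, so $k\lesssim L^{n+1}$.

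For the upper bound $\delta_\alpha(k)=O(k^{1/(n+1)})$, I would build a DGA homomorphism $\phi\colon\mathcal M_Y^*\to\Omega^*S^n$ in the rational homotopy class of $k\alpha$ with $\Dil(\phi)\lesssim k^{1/(n+1)}$ and then quote the absolute case of the shadowing principle (Theorem \ref{thm:main}) to get a genuine $g\colon S^n\to Y$ with $\Lip g\lesssim k^{1/(n+1)}$ and $[g]=k\alpha$ up to a torsion class, which costs only $O(1)$ to correct. The homomorphism $\phi$ is routed through the rank-one symmetric space $Z=\mathbb{CP}^{(n-1)/2}$: I would produce a DGA map $q\colon\mathcal M_Y^*\to\mathcal M_Z^*=\langle u^{(2)},v^{(n)}\mid dv=u^{(n+1)/2}\rangle$ with $q(y_{j_0})\neq 0$ by setting $q(x_i)=\gamma_i u^{a_i}$ for a point $\gamma=(\gamma_i)$ at which $P_{j_0}$ does not vanish but every $P_j$ with $\deg y_j<\deg y_{j_0}$ does (the latter condition being exactly what lets $q$ extend over the lower $y_j$, since in $\mathcal M_Z^*$ the classes $u^{b_j}$ are not exact), and then composing $q$ with the $u\mapsto\lambda u$, $v\mapsto\lambda^{(n+1)/2}v$ rescaling of the pullback of $m_Z$ along the standard circle bundle $S^n\to\mathbb{CP}^{(n-1)/2}$. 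Because $m_Z$ kills the top generator $v$ for dimension reasons, $\phi(x_i)$ has norm $\sim\lambda^{a_i}$ and every $\phi(y_j)=0$, so $\Dil(\phi)\sim\lambda^{1/2}$; setting $\lambda^{(n+1)/2}=k$ gives $\Dil(\phi)\sim k^{1/(n+1)}$ and period exactly $\sim k$ on $y_{j_0}$.

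The main obstacle is the existence of the point $\gamma$, equivalently the structural claim that every rational Hurewicz-kernel element of a symmetric space factors rationally through a rank-one symmetric space. I would deduce this from the fact that the $P_j$ form a homogeneous, hence permutable, regular sequence in $\mathbb Q[x_1,\dots,x_m]$ (so the ones of degree below $\deg y_{j_0}$ cut out a positive-dimensional subvariety on which $P_{j_0}$ is not a zero divisor), together with the explicit description of the minimal models of compact symmetric spaces via forbidden characteristic classes — which is where I expect the classification of symmetric spaces to be genuinely used, including handling rational points and the scalar extension and confirming that the odd-sphere factors contribute no kernel elements at all. By contrast, the naive alternative of post-composing a representative of $\alpha$ with a weight-scaling automorphism of $\mathcal M_Y^*$ realized by the shadowing principle fails for higher-rank targets such as $G_{\mathbb C}(2,4)$, where a lower-degree odd generator would inflate the dilatation past $k^{1/(n+1)}$. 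The remaining ingredients — the period bookkeeping, the dilatation estimate, and the torsion correction — are routine given Lemma \ref{lem:IP}, the obstruction formulas of \S\ref{S:Obs}, and Theorem \ref{thm:main}.
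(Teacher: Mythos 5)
Your construction for the upper bound in the Hurewicz-kernel case is genuinely different from the paper's: you route $\phi$ through the rank-one model $\mathcal{M}^*_Z$ for $Z=\mathbb{C}\mathbf{P}^{(n-1)/2}$, whereas the paper never leaves $\mathcal{M}_Y^*$. It takes a representative $f$ of $\alpha$ itself, builds the obvious algebraic nullhomotopy $\Phi$ of $f^*m_Y$, precomposes with the weight-scaling automorphism $\rho_L$ (scaling $W_0$-generators of degree $k$ by $L^k$ and $W_1$-generators by $L^{k+1}$), and shadows the $t=0$ end. Both proofs share the same outline (produce a cheap $\phi$ in the real class of $L^{n+1}\alpha$, then shadow it), but the paper's way of producing $\phi$ requires neither the classification of symmetric spaces, nor a point $\gamma$ on the Borel variety, nor any decomposition of $\alpha$ over a basis of $W_1\cap V_n$.

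The claim that the weight-scaling route ``fails for higher-rank targets such as $G_{\mathbb{C}}(2,4)$'' is incorrect, and the reason pinpoints the ingredient your proposal omits: the paper uses not merely formality but \emph{geometric formality} of symmetric spaces. Because products of harmonic forms on a symmetric space are harmonic, the harmonic-form section $H^*(Y;\mathbb{R})\to\Omega^*Y$ is an algebra homomorphism, and this lets one choose $m_Y$ with $m_Y(W_1)=0$ \emph{exactly}, not just up to exact forms. Consequently $(f^*m_Y)\circ\rho_L$ kills every $W_1$-generator (including your $y_3$) and has operator norm $O(L^k)$ in degree $k$, so its dilatation is $O(L)$ no matter how many lower-degree odd generators are present. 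Your worry would be exactly right if $m_Y(y_3)$ were merely exact and nonzero --- which is all ordinary formality guarantees --- since then the $L^6 f^*m_Y(y_3)$ term would force dilatation $\gtrsim L^{6/5}$. You correctly located the pressure point but missed the direct fix, and your $\mathbb{C}\mathbf{P}^m$-detour is engineered around a difficulty that does not actually arise; worse, it introduces the $\gamma$-existence problem you flag as your ``main obstacle,'' which among other things needs a real (indeed rational) point rather than a complex one, and it does not straightforwardly handle an arbitrary $\alpha$ annihilating $W_0\cap V_n$ rather than a single dual basis vector $y_{j_0}$.

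Your handling of the non-kernel case and your lower-bound sketch via the weight-homogeneity of the period formula on a pure Sullivan algebra are both fine, and in fact more explicit about the $\Omega$-direction than the paper's proof, which only spells out the construction of the $CL$-Lipschitz representative of $L^{n+1}\alpha$.
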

This is part (i) of Theorem \ref{summary:sym}; part (ii) follows immediately.

Note that it is not clear whether the theorem contains any new results beyond the
previous ones.  Symmetric spaces are formal, meaning that their rational homotopy
type is determined by their cohomology.  From looking at presentations of the
rational cohomology of nearly all symmetric spaces, it appears that their
homotopy classes can always be represented as generalized Whitehead products.
Nevertheless, there is also no obvious reason why this should be the case;
certainly formality itself is not sufficient.\footnote{An example is
  $\left[(S^3 \times S^3)^{\#2} \times S^3\right]^\circ$.  This space is formal; as
  with $\mathbf{NF}$, the boundary of the puncture can be modeled via two
  different nullhomotopies of a Whitehead product, but not as a Whitehead product
  itself.}
\begin{open}
  Are all homotopy classes of symmetric spaces contained in generalized Whitehead
  product sets?  Can this be shown other than by exhaustion?
\end{open}

The proof of the theorem heavily uses the fact that symmetric spaces admit a
splitting homomorphism of algebras $H^*(Y;\mathbb{R}) \to \Omega^*(Y)$, induced
by the harmonic forms.  There has been some study of when the harmonic forms
specifically induce such a splitting for $Y$ a manifold \cite{Kot}, but besides
formality it is not clear what the requirements are for such a splitting to
exist.
\begin{open}
  Give a topological characterization of all simplicial complexes $Y$ for which
  the quotient map $\Omega^*(Y) \to H^*(Y;\mathbb{R})$ admits a splitting as a
  homomorphism of algebras.  Perhaps the homotopy groups of such spaces are
  always generated by generalized Whitehead products?
\end{open}
\begin{proof}[Proof of Theorem \ref{thm:sym}.]
  It is not hard to see that distortion is a rational homotopy invariant.
  Therefore for any given symmetric space it is enough to show the theorem for
  symmetric spaces themselves (or a compact retract, for non-compact symmetric
  spaces.)

  We use two topological properties of symmetric spaces.  First, the
  indecomposables of the minimal model of a symmetric space split as
  $W_0 \oplus W_1$ where $W_0=\ker d$ and $dW_1 \subset \bigwedge W_0$.  This is
  true for all homogeneous spaces; one gets such a model by canceling out some
  elements of $W_0$ and $W_1$ in the (non-minimal) Sullivan model constructed in
  \cite[\S15(f)]{FHT}.  Second, symmetric spaces are \emph{geometrically formal}
  \cite{Kot}; that is, products of harmonic forms are harmonic, so in particular
  there is an algebra homomorphism $H^*(Y;\mathbb{R}) \to \Omega^*(Y)$.  This
  induces a minimal model $m_Y:\mathcal{M}_Y^* \to \Omega^*(Y)$ such that
  $m_Y(y)$ is nonzero if and only if $y$ is cohomologically nontrivial.  This is
  the property we actually use.

  This property is preserved under pullback by a rational homotopy isomorphism;
  therefore for a non-compact symmetric space we can take our space $Y$ to be an
  embedded compact deformation retract.

  Let us take a splitting $W_0 \oplus W_1$ as above.  Note that there is an
  automorphism $\rho_L:\mathcal{M}_Y^* \to \mathcal{M}_Y^*$ which sends an
  indecomposable
  $$w \mapsto \left\{\begin{array}{l l}L^{\deg w}w & w \in W_0 \\
  L^{\deg w+1}w & w \in W_1.\end{array}\right.$$

  Now, suppose $\alpha \in \pi_n(Y)$ is in the kernel of the Hurewicz map, and
  let $f:S^n \to Y$ be a representative of $\alpha$.  Recall that the
  indecomposables of $\mathcal{M}_Y^*$ are naturally isomorphic to
  $\Hom(\pi_n(Y),\mathbb{R})$.  By the method of \S\ref{S:pi_k}, we build a
  homotopy
  $$\Phi:\mathcal{M}_Y^* \to \Omega^*S^n \otimes \mathbb{R}\langle t,dt \rangle$$
  from $f^*m_Y$ to a map which sends $y \mapsto 0$ when $\deg y<n$ and which
  sends the indecomposables in degree $n$ to $\mathbb{R}d\vol$ via the map
  $v \mapsto v(\alpha)d\vol$ (the double dual of $\alpha$ in
  $\Hom(\Hom(\pi_n(Y),\mathbb{R}),\mathbb{R})$.)

  Then $\Phi \circ \rho_L$ is a homotopy between the double dual of
  $L^{n+1}\alpha$ (at $t=1$) and a map $\ph_L$ (at $t=0$) whose image in degree
  $k$ has operator norm $O(L^k)$, since it sends $W_1 \to 0$.\footnote{It is here
    that this argument definitively fails for non-formal spaces: for example, for
    $\mathbf{NF}$, any model maps the element $y$ to a nonzero form since $x_iy$
    is cohomologically nontrivial.}  In other words, $\ph_L$ is in the rational
  homotopy class of $L^{n+1}\alpha$; applying the shadowing principle, we get a
  $CL$-Lipschitz map $f_L:S^n \to Y$ whose integral homotopy class is
  $L^{n+1}\alpha$.  This proves the theorem.
\end{proof}

\subsection{Lipschitz homotopies} \label{S:liphom}

The major application of the shadowing principle is in turning algebraic
homotopies into geometric ones.  Here previous geometric bounds were poor or
nonexistent, and just producing some new ones is a big result.  We produce
several new results using the same schema:
\begin{enumerate}
\item Construct an algebraic homotopy between two maps $f,g:X \to Y$, with a
  bound on dilatation determined by their Lipschitz constants.
\item Concatenate this homotopy with an algebraic self-homotopy of $g$ so that
  the result is homotopic rel ends to a genuine homotopy between $f$ and $g$.
  This may increase the dilatation by an amount depending on the homotopy class
  of $g$.
\item Finally, apply the shadowing principle to obtain a genuine homotopy.
\end{enumerate}
The latter two steps are encapsulated in the technical Theorem \ref{tech:htpy}.

This schema can prove a number of different results, depending on the bound
achieved in the first step.  The most general and easily stated such result is as
follows.
\begin{thm} \label{thm:htpy}
  Let $Y$ be a finite simply connected complex and $X$ a finite complex of
  dimension $n$.
  \begin{enumerate}[label={(\roman*)}]
  \item There are constants $C(X,Y)$ and $p(X,Y)$ such that any homotopic
    $L$-Lipschitz maps $f \sim g:X \to Y$ are homotopic via a
    $C(L+1)^p$-Lipschitz homotopy, which can in addition be taken to have length
    $C$.
  \item Moreover, any nullhomotopic $L$-Lipschitz map is nullhomotopic via a
    homotopy of length $C$ and thickness $CL^2$.  (In particular, this is true
    for general homotopies if $X$ has the rational homotopy type of a
    suspension.)
  \end{enumerate}
\end{thm}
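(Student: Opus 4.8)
The plan is to follow the three-step schema described above, with the shadowing principle (Theorem~\ref{thm:main}) as the engine of the last step. First I would reduce to the case that $Y$ is a compact Riemannian manifold with boundary, by embedding $Y\hookrightarrow\mathbb{R}^N$ and thickening as in the proof of Theorem~\ref{thm:triv}; together with a preliminary quantitative simplicial approximation (Proposition~\ref{prop:qSAT}) of $f$ and $g$, this costs only a constant factor. Fix a minimal model $m_Y\colon\mathcal{M}_Y^*\to\Omega^*Y$. What remains is a purely algebraic task: produce, correct, and realize a homotopy $\mathcal{M}_Y^*\to\Omega^*X\otimes\mathbb{R}\langle t,dt\rangle$ from $f^*m_Y$ to $g^*m_Y$.

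For step~(1) I would build such a homotopy $\Phi$ degree by degree via Proposition~\ref{prop:extHtpy}, starting from the trivial homotopy on $\mathcal{M}_Y^*(1)$. Because $f\simeq g$, a genuine homotopy pulls back (through the realization map $\rho$) to an algebraic one, so the construction can be carried out keeping $\Phi$ in the homotopy class of a genuine homotopy and in particular killing all obstruction classes in $H^{k+1}(X;V_{k+1})$ as they arise (these vanish automatically once $k\geq\dim X$). The bookkeeping of \eqref{eqn:better} then keeps the forms controlled: $\Dil_1(\Phi)\lesssim L^{(2n-2)/n}$ and, more usefully for (ii), $\Dil_{L^{-2}}(\Phi)\lesssim L^2$. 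In the nullhomotopic case of (ii) one takes $g$ constant, so $g^*m_Y$ is the zero map in positive degrees; the same estimates hold.

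Step~(2) is the real work, and where I expect the main obstacle. The homotopy $\Phi$ need not lie in the relative homotopy class (rel $X\times\{0,1\}$) of any genuine homotopy $X\times[0,1]\to Y$: these relative classes form a torsor over the group of algebraic self-homotopies of $g^*m_Y$ up to second-order homotopy, which rationally is assembled from $\bigoplus_k H^k(X;\pi_{k+1}(Y))\otimes\mathbb{R}$ (via the exact sequence of \S\ref{S:Obs}), and inside it the realizable classes form a coset of a lattice of finite covolume. I would therefore concatenate $\Phi$, using the quantitative concatenation lemma (Proposition~\ref{prop:cat}), with an algebraic self-homotopy $\Psi$ of $g^*m_Y$ chosen to move into a realizable class --- morally, rounding off the fractional part of $\Phi$ modulo this lattice --- obtaining a homotopy $\Xi$. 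The resulting increase in dilatation is governed by the covolume: for a nullhomotopy the basepoint is the canonical constant map, so the covolume depends only on $X$ and $Y$; in general it is bounded polynomially in $\Lip g\leq L$ (compare the polynomial bounds of \cite{IRMC}), which is the source of the exponent $p(X,Y)$ in (i). This part, together with step~(3), is packaged as the technical Theorem~\ref{tech:htpy}.

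For step~(3) I would apply the shadowing principle to the pair $\bigl(X\times[0,\tau],\,X\times\{0,\tau\}\bigr)$ with $\ph=\rho_\tau\Xi$: a genuine homotopy from $f$ to $g$ furnishes the map required in Theorem~\ref{thm:main}, hypotheses (1)--(4) hold with $L'\sim\Dil_\tau(\Xi)$ by step~(2), and one obtains a genuine homotopy $G\colon X\times[0,\tau]\to Y$ from $f$ to $g$ with $\Lip G\lesssim L'$, hence length and thickness $\lesssim L'$. Reparametrizing $[0,\tau]$ linearly to unit length multiplies the length by $\tau^{-1}$ and leaves the thickness unchanged. For (ii), take $\tau=L^{-2}$, so $L'\lesssim L^2$ and the reparametrized homotopy has thickness $\lesssim L^2$ and length $\lesssim L^{-2}\cdot L^2=1$. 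For (i), since $\Dil_\tau(\Xi)$ grows only like a fixed negative power of $\tau$ as $\tau\to0$, one can pick $\tau$ small enough that $\tau\Dil_\tau(\Xi)\leq C$ while $\Dil_\tau(\Xi)$ stays bounded by a polynomial in $L$, giving length $\leq C$ and thickness $\leq C(L+1)^{p}$. The parenthetical remark should follow because when $X$ is rationally a suspension the products in $\Omega^*X$ are rationally trivial, so the algebraic homotopy between $f^*m_Y$ and $g^*m_Y$ --- not merely a nullhomotopy --- satisfies the same $\Dil_{L^{-2}}(\Phi)\lesssim L^2$ estimate, and the realizable lattice is again controlled independently of $[g]$, reducing the general case for such $X$ to (ii).
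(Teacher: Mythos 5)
Your overall schema---embed $Y$ in a compact manifold, build a bounded algebraic homotopy, adjust it to lie in a realizable relative class, and then apply the shadowing principle---matches the structure of the paper's proof, and your use of Proposition~\ref{prop:extHtpy}, Proposition~\ref{prop:cat}, Theorem~\ref{tech:htpy}, and Theorem~\ref{thm:main} is the right toolkit. But there is a genuine gap in your step~(1), and you also pass silently over the specific algebraic fact that makes (ii) so much cleaner than (i).

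The estimate~\eqref{eqn:better} is stated under the explicit hypothesis that the formal homotopy can be built ``without encountering any nonzero obstructions that make extendability dependent on choices made in lower degree.'' You invoke it after saying you will be ``killing all obstruction classes in $H^{k+1}(X;V_{k+1})$ as they arise,'' as if this were free. It is not: when an obstruction is nonzero, you must concatenate the partial homotopy $\Phi_k$ with a self-homotopy $\Psi_k$ of $f^*m_Y$ that hits the obstruction class, and the size of the smallest such $\Psi_k$ is controlled by an integral lattice argument whose bound is a polynomial of \emph{unknown} degree in $L$. This is precisely the source of the exponent $p(X,Y)$ in (i), and it enters during the degree-by-degree construction of $\Phi$, not in a single final correction as your step~(2) suggests. (If you postpone the correction to the end you have no $\Phi$ to correct: an unextendable $\Phi_k$ simply stops the induction.) The paper spells out a four-step roadmap at each degree, and its Remark~(f) gives a concrete example of maps $S^4\times S^3\to S^4$ whose DGA homotopies cannot be bounded in terms of the dilatation of the endpoint maps---so your claimed bound $\Dil_{L^{-2}}(\Phi)\lesssim L^2$ for general homotopies is false at the DGA level, not merely unproven.

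Conversely, the reason~\eqref{eqn:better} \emph{does} apply for nullhomotopies is a special fact you do not mention: for a nullhomotopy the obstruction never arises, because the relevant obstruction map $[\mathcal{M}_Y^*(k),\Omega^*X\otimes\langle e\rangle]_0\to H^{k+1}(X;V_{k+1})$ is identically zero. This follows from the derivation identity~\eqref{Leibniz}: when the base map is zero, $\eta$ vanishes on all decomposables, and $dV_{k+1}$ consists of decomposables, so $\eta|_{dV_{k+1}}=0$. The paper devotes the first half of the proof of~(ii) to establishing this surjectivity. Without it, the assertion that ``the same estimates hold'' in the nullhomotopic case has no justification.

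Two smaller points: your explanation of the suspension parenthetical is off---the relevant observation is that when $X$ is rationally a suspension, $[X,Y]$ carries a group structure, so one can replace the pair $(f,g)$ with $(f-g, 0)$ and reduce directly to~(ii); saying ``products in $\Omega^*X$ are rationally trivial'' is not quite a true statement and does not obviously feed into the argument. And your step~(3) re-derives the length/thickness tradeoff by hand; this is already encoded in the parameter $\alpha$ of Theorem~\ref{tech:htpy} (the paper uses $\alpha=0$ for~(ii)), which is fine but redundant.
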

\begin{rmks}
  \begin{enumerate}[leftmargin=*,label={(\alph*)}]
  \item Calder and Siegel \cite{CaSi} and again Ferry and Weinberger
    \cite{FWPNAS} gave proofs that constant-length homotopies can always be
    obtained in this context, but without any geometric bounds in the space
    direction.  In this sense only the simultaneous bound on thickness is new.
  \item Theorem \ref{thm:htpy} is a stronger statement than Conjecture
    \ref{conj:htpy} as given by Gromov, since Gromov did not ask for a bound on
    lengths of homotopies.  On the other hand, since we give nonlinear thickness,
    it is weaker than Conjectures 2 and 3 in \cite{CMW}.
  \item \label{7.1} Part (ii) gives an almost sharp estimate of $O(L^{2n})$ on
    the volume of the nullhomotopy: for any completely general bound on length
    and thickness, we must have
    $$(\text{length}) \cdot (\text{thickness})^n=\Omega(L^{2n-2}).$$
    This is demonstrated by a sequence of examples first given in
    \cite[\S7.1]{CMW}.  Let $X_n$ be a space constructed by attaching
    $(n+1)$-cells to $S^2 \vee S^2$ to kill
    $\pi_n(S^2 \vee S^2) \otimes \mathbb{Q}$.  Then $\pi_n(X_n)$ is finite, but
    since the generators of $\pi_n(S^2 \vee S^2) \otimes \mathbb{Q}$ have
    distortion $\sim k^{1/(2n-2)}$, we can find $L$-Lipschitz maps $S^n \to X_n$
    for which every nullhomotopy has degree $\Omega(L^{2n-2})$ over some
    $(n+1)$-cell.

    In a more refined sense, the estimate is sometimes sharp: one cannot decrease
    the degree of the thickness bound while retaining constant length.  This can
    be seen for maps $S^3 \to S^2$.  Consider a nullhomotopic,
    $\Theta(L)$-Lipschitz such map $f$ which sends a solid torus inside $S^3$ to
    $S^2$ via a map whose cross-section has degree $L^2$ and which is constant on
    the circular fibers, and sends the complementary solid torus to the south
    pole of $S^2$.  Let $C$ be a circle on the bounding torus which links
    nontrivially (hence with linking number $L^2$) with the preimage of the north
    pole.  Then any nullhomotopy of $f$ must have relative degree $L^2$ on
    $C \times [0,1]$; therefore, if its length is constant, its thickness must be
    $\Omega(L^2)$.  I would like to thank Sasha Berdnikov for pointing out this
    argument.
  \item Moreover, in the case of maps $S^4 \times S^3 \to S^4$, the results of
    \cite[\S7.2]{CMW} show that the exponent $p$ from (i) cannot be less than
    $8/3$.  Thus the bound of (ii) does not in general hold for
    non-nullhomotopies.
  \item On the other hand, the estimate (ii) can be improved in various ways if
    we know more about the rational homotopy type of $Y$.  For example, if $Y$ is
    rationally $k$-connected, we can take the degree of the thickness bound to be
    $1+1/k$.
  \item While the estimates (i) and (ii) look similar, they are actually
    different in certain crucial respects.  The nullhomotopy estimate is very
    soft, using only facts about DGAs.  On the other hand, the estimate for
    homotopies is actually false on the level of DGAs; indeed homotopies may be
    unbounded in the size of the original map.  This can already be seen for
    homomorphisms modeling maps $S^4 \times S^3 \to S^4$, in terms of minimal
    models
    $$\langle a^{(4)}, b^{(7)} \mid da=0, db=a^2 \rangle \to
    \langle x^{(3)}, y^{(4)}, z^{(7)} \mid dx=dy=0, dz=y^2\rangle.$$
    For small $\epsi>0$, the pairs of homomorphisms $a \mapsto \epsi y$,
    $b \mapsto \epsi^2 z$ and $a \mapsto \epsi y$, $b \mapsto \epsi^2z+xy$, which
    have norm bounded independent of $\epsi$, are homotopic via the homotopy
    \begin{align*}
      a &\mapsto \epsi y-(2\epsi)^{-1}x \otimes dt\\
      b &\mapsto \epsi^2z+xy \otimes t,
    \end{align*}
    whose size increases without bound as $\epsi \to 0$.  Indeed, any homotopy
    must correctly resolve the obstruction class $(2\epsi)^{-1}x$ in degree 3.
    Thus to prove the polynomial bound we need to use the integral structure of
    the set of homotopy classes between $X$ and $Y$.  Unfortunately, the explicit
    estimate on the degree of the polynomial goes out the window in the process.
  \end{enumerate}
\end{rmks}
Our next theorem replicates and generalizes the results of \cite{CDMW} and
\cite{CMW}.  To do this requires a definition of spaces with positive weights,
discussed in \cite{BMSS}.  A simply connected space $Y$ has
($\mathbb{Q}$-)\emph{positive weights} if the indecomposables of its minimal DGA
split as $U_1 \oplus U_2 \oplus \cdots \oplus U_r$ so that for every
$t \in \mathbb{Q}$ there is an automorphism $\ph_t$ sending $v \mapsto t^iv$,
$v \in U_i$.  Examples of spaces with $\mathbb{Q}$-positive weights include
formal spaces \cite{Shiga}, coformal spaces \cite{PWHT}, as well as homogeneous
spaces and other ``depth two'' spaces whose indecomposables split as
$W_1 \oplus W_2$, where $dW_1=0$ and $dW_2 \subset \bigwedge W_1$.  A nonexample
is a complex given in \cite{MT} which is constructed by attaching a 12-cell to
$S^3 \vee \mathbb{C}\mathbf{P}^2$; other, much higher-dimensional non-examples
are given in \cite{ArLu} and \cite{Am}.

For the below theorem, it is enough that $Y$ has an $(n+1)$-connected map to a
space with positive weights.  For example, the examples in Remark \ref{7.1} above
fit the bill since they have $(n+1)$-connected maps to coformal spaces.
\begin{thm} \label{thm:posw}
  Suppose $Y$ is a finite simply connected complex with positive weights equipped
  with automorphisms $\ph_t$ and $X$ is a finite complex of dimension $n$.  Then
  there are constants $C_1(n,Y)$ and $C_2(X)$ such that any nullhomotopic
  $L$-Lipschitz map $f:X \to Y$ has a nullhomotopy of length $C_1C_2(L+1)^d$ and
  thickness $C_1(L+1)$.  Here $d$ is the number of levels in a filtration of the
  indecomposables of $\mathcal{M}^*_Y(n)$,
  $$0=W_0 \subset W_1 \subset \cdots \subset
  W_d\text{ with }\bigwedge W_d=\mathcal{M}^*_Y(n),$$
  such that $dW_j \subseteq \bigwedge W_{j-1}$ and such that there is a basis for
  the indecomposables $V_k$ in each degree $k$ such that the subspaces
  $W_j \cap V_k$ for each $j$ and
  $$U_i \cap V_k=\{v \in V_k: \ph_t(v)=t^iv\}$$
  for each $i$ are generated by subbases.
\end{thm}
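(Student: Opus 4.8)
The plan is to run the three-step schema stated just before Theorem~\ref{thm:htpy}, carrying out its first (algebraic) step along the weight-compatible filtration $0=W_0\subset W_1\subset\cdots\subset W_d$ of $\mathcal{M}^*_Y(n)$. By the technical Theorem~\ref{tech:htpy} it suffices to produce an algebraic nullhomotopy
$$\Phi:\mathcal{M}_Y^*(n)\to\Omega^*X\otimes\mathbb{R}\langle t,dt\rangle$$
of $f^*m_Y|_{\mathcal{M}_Y^*(n)}$ with $\Dil_\tau(\Phi)\le C_1(n,Y)(L+1)$ for $\tau=C_2(X)(L+1)^{d-1}$: realizing $\Phi$ by $\rho_\tau$, then repairing its relative homotopy class rel $X\times\{0,1\}$ by concatenating (Prop.~\ref{prop:cat}) with an algebraic self-homotopy of $f^*m_Y$ whose obstruction class is a smallest representative of the relevant coset of the integral lattice $\img\bigl([X\times[0,1],\text{genuine loops}]\bigr)$, yields data to which the shadowing principle Theorem~\ref{thm:main} applies; the output is a genuine nullhomotopy of thickness $\lesssim L+1$ and length $\lesssim\tau(L+1)\le C_1C_2(L+1)^d$. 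The coset repair is why $C_2$ depends on $X$, but not on $L$: the fractional part one rounds off is bounded by the covolume of the lattice.

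To build $\Phi$ I would use the positive weights to get the ``backbone'' of the nullhomotopy essentially for free: since every indecomposable has weight $\ge1$, the family $\{\ph_t\}_{t\in[0,1]}$ interpolates between $\ph_1=\id$ and $\ph_0=$ the augmentation, so $t\mapsto f^*m_Y\circ\ph_t$ is, after inserting $dt$-corrections, an algebraic nullhomotopy of $f^*m_Y$. The corrections are produced by induction on the filtration level $j$: having built $\Phi$ on $\bigwedge W_{j-1}$ with primitive $\beta_{j-1}=\int_0^1\Phi_{j-1}$ satisfying $d\beta_{j-1}(v)=f^*m_Y(v)-\int_0^1\Phi_{j-1}(dv)$ (cf.\ the proof of Theorem~\ref{thm:main}), a generator $v\in W_j$ has $dv\in\bigwedge W_{j-1}$, so the form one must antidifferentiate to define the $dt$-part of $\Phi(v)$ is a polynomial in the level-$(j-1)$ data; by Lemma~\ref{lem:IP} (and the second quantitative Poincar\'e lemma when $A\neq\emptyset$) it has a primitive of comparable size. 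Tracking $\lVert\beta_j\rVert_\infty$ through the induction gives the recursion $\lVert\beta_j(v)\rVert_\infty\lesssim\tau_{j-1}(L+1)^{\deg v+1}$, which is $\lesssim\tau_j(L+1)^{\deg v}$ precisely when $\tau_j\sim(L+1)\tau_{j-1}$; starting from $\tau_1\sim1$ (at level $1$ the $W_1$-generators are closed and pull back to exact forms, antidifferentiated with norm $\lesssim L^{\deg v}$ by Lemma~\ref{lem:IP}) this produces $\tau_d\sim(L+1)^{d-1}$ while $\Dil_{\tau_j}(\Phi_j)$ stays $\lesssim L+1$ at every level. The compatibility of a basis of each $V_k$ with both the filtration $W_j$ and the weight grading $U_i$ is what lets the flow $\ph_t$ and the level-by-level antidifferentiation be carried out consistently in tandem.

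The main obstacle is the bookkeeping that the level-$j$ forms one antidifferentiates really are exact once the controlled (small) choices have been made at levels $<j$, so that the induction does not stall on a spurious obstruction. That the obstructions vanish \emph{rationally} at every level is automatic, since $f$ is genuinely nullhomotopic and hence $f^*m_Y$ is algebraically nullhomotopic; the point is to see that the remaining freedom (primitives are unique only modulo closed forms) can be used to keep every choice of size $\lesssim\tau_j(L+1)^{\deg v}$ while still arriving at $0$ in the top degree. This is where one leans on the flow picture: $\ph_0$ being an honest augmentation supplies a consistent, if uncontrolled, set of corrections, and one shows by descending through the filtration that replacing each by its small Poincar\'e-lemma primitive changes the later obstructions only by exact forms of controlled size. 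Given this, the remaining work -- pinning down the constants, the $\Lambda$-concatenation encoding the integral structure, and the passage through Theorem~\ref{tech:htpy} -- is a routine application of the quantitative Poincar\'e lemmas of \S2 and the obstruction-theoretic machinery of \S3.
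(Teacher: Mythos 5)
Your proposal is correct and follows the paper's proof almost exactly: use the weight automorphisms $\ph_t$ to supply the $t^i$-coefficients of $\Phi$ essentially for free (i.e.\ $\Phi(v) = f^*m_Y v \otimes t^i + c(v)\otimes it^{i-1}dt$ for $v\in V_k\cap U_i$), produce the $dt$-corrections $c(v)$ by inductive antidifferentiation through the filtration $W_1\subset\cdots\subset W_d$ via Lemma \ref{lem:IP}, track the growth $\lVert c(v)\rVert_\infty \lesssim L^{k+j-1}$ for $v \in V_k\cap\mathcal{M}_j$ to get $\Dil_{C(X,Y)L^{d-1}}\Phi \leq L$, and plug into Theorem \ref{tech:htpy} with $\alpha = 1$. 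One small misattribution: the dependence of $C_2$ on $X$ comes chiefly from the isoperimetric constants of $X$ entering Lemma \ref{lem:IP} at each level of the filtration, not primarily from the ``coset repair'' at the end (which for nullhomotopic $f$ is just a bounded constant absorbed via the remark after Theorem \ref{tech:htpy}); and the absence of obstructions at each level is the soft algebraic fact from the proof of Theorem \ref{thm:htpy}(ii) (the derivation $\eta$ vanishes on decomposables so $\mathcal{O} = 0$), which does not need to be re-derived from the genuine nullhomotopy of $f$ as carefully as your final paragraph suggests.
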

\begin{rmks}
  \begin{enumerate}[leftmargin=*,label={(\alph*)}]
  \item The number $d$ is bounded above by $n-1$ since we can always take $W_j$
    to consist of all the indecomposables of $\mathcal{M}_Y^*(j+1)$.  On the
    other hand, sometimes it can be much smaller: for example, for homogeneous
    spaces or any of the other depth two spaces discussed in \cite{CMW}, we can
    always choose $d=2$.
  \item It remains unclear whether such linearly thick nullhomotopies are
    achievable when $Y$ does not have positive weights.  The example of \cite{MT}
    has an extremely complicated DGA; it may be worth looking for an example
    whose Sullivan model has a simpler presentation to test whether there is an
    obstruction to linear thickness.  Nevertheless, the requirement that the
    space be neither formal nor coformal already forces a certain amount of
    complexity in the algebra.
  \item Unlike the previous theorem, this one gives a sharp asymptotic estimate
    on the volume of nullhomotopies for the examples of \cite[\S7.1]{CMW}.  On
    the other hand, as we see in Theorem \ref{thm:weird}, it is not sharp in the
    case of maps $S^m \to S^{2n}$.
  \end{enumerate}
\end{rmks}

We now state the technical result which we use to convert estimates on algebraic
homotopies to geometric ones.  In the proofs of theorems \ref{thm:htpy} and
\ref{thm:posw}, we assume the target is a compact Riemannian manifold.  As in the
proof of Theorem \ref{thm:triv}, this can be built from a general complex by
thickening; since this thickening is a Lipschitz homotopy equivalence, it changes
the result by at most some $C(Y)$.
\begin{thm} \label{tech:htpy}
  Let $Y$ be a simply connected compact Riemannian manifold with a minimal model
  $m_Y:\mathcal{M}^*_Y \to \Omega^*(Y)$ and $X$ an $n$-dimensional finite
  simplicial complex with the standard metric.  Let $f,g:X \to Y$ be homotopic
  Lipschitz maps and let
  $$\Phi:\mathcal{M}^*_Y \to X \otimes \mathbb{R}\langle t,dt \rangle$$
  be an algebraic homotopy between $f^*m_Y$ and $g^*m_Y$ with
  $\Dil_\tau(\Phi) \leq \sigma$ for some $\tau$ and $\sigma$.  Then for every
  $\alpha \in [0,1]$ there is a homotopy between $f$ and $g$ of length
  $C(L_\lambda+1)$, where
  $$L_\lambda=\sigma\tau+P(\lVert[f]\rVert_{\Lip})^\alpha$$
  and thickness $C(L_\theta+1)$, where
  $$L_\theta=\max\{\sigma,P(\lVert[f]\rVert_{\Lip})^{1-\alpha}\},$$
  and where $C$ depends on $Y$, $n$, and the minimal model and $P$ is a
  polynomial depending on $X$ and $Y$.
\end{thm}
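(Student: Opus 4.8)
The plan is to reduce to the relative shadowing principle, Theorem~\ref{thm:main}, applied to the pair $(X\times[0,\tau'],X\times\{0,\tau'\})$ with boundary map $f\sqcup g$; as in the proof of Theorem~\ref{thm:triv} we may first assume, at the cost of a factor $C(Y)$, that $Y$ is a compact Riemannian manifold with boundary. The difficulty is that the given $\Phi$ need not lie in the relative homotopy class (rel ends) of a genuine homotopy from $f$ to $g$, so we must correct it. The relative homotopy classes rel $X\times\{0,\tau'\}$ of DGA homomorphisms $\mathcal{M}_Y^*\to\Omega^*(X\times[0,\tau'])$ extending $(f\sqcup g)^*m_Y$ form a torsor over $\Gamma:=\pi_1(\Hom(\mathcal{M}_Y^*,\Omega^*X),g^*m_Y)$; since $\Hom(\mathcal{M}_Y^*,\Omega^*X)$ with its homotopy theory models the space of maps from $X$ into the localization of $Y$, the group $\pi_1(\Map(X,Y),g)$ maps to $\Gamma$ with finite kernel and cocompact image $\Lambda$ (cf.\ \cite[Thm.~10.2(i)]{SulLong}). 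Pullbacks of genuine homotopies $f\simeq g$ form a single nonempty coset of $\Lambda$, so $[\Phi]$ differs from it by an element whose $\Lambda$-coset has a representative $\delta\in\Gamma$ of norm (in the formal-length metric) at most the diameter of a fundamental domain for $\Lambda$. Realizing $\delta$ by an algebraic self-homotopy $S$ of $g^*m_Y$ and concatenating $\Phi$ with $S$ via Proposition~\ref{prop:cat} produces a homotopy $\Phi'$ lying in the relative homotopy class of a genuine homotopy, with $\lengt(\Phi')\le\lengt(\Phi)+\lengt(S)$.

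What I expect to be the main obstacle is to bound the fundamental domain of $\Lambda$ — equivalently the formal length of $S$ — by $P=P(\lVert[f]\rVert_{\Lip})$ for some polynomial $P$ depending on $X$ and $Y$. Since $\Lambda$ depends only on the homotopy class $[f]$, it may be analyzed using a near-optimal representative: I would produce generators of $\pi_1(\Map(X,Y),g)$ represented by loops of maps assembled from such a representative together with a fixed finite supply of bounded auxiliary data (standard maps of spheres and nullhomotopies of Whitehead products), whose images in $\Gamma$ are bounded polynomially in $\lVert[f]\rVert_{\Lip}$. This is the relative, fiberwise analogue of the polynomial bound on the growth of $[X,Y]$ from \cite{IRMC}; it is precisely here that the integral structure of the mapping set enters, and, as in Remark~(f) after Theorem~\ref{thm:htpy}, where explicit control over the degree of $P$ is lost. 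These generators span $\Lambda$ and bound a fundamental domain by $P$, so $S$ may be taken of formal length $\lesssim P$, with Lipschitz constant on its intermediate maps $\lesssim\Lip g$.

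It then remains to assemble the estimates, distributing the cost of $S$ between the two directions via the parameter $\alpha$. Since $S$ is $\lesssim P$ times a bounded generator of $\pi_1(\Map(X,Y),g)$, for any scale $\mu\in[1,P]$ it may be realized by traversing $\lesssim P/\mu$ times a $\mu$-scaled version of that generator, so that the points move a total distance $\lesssim P/\mu$ through intermediate maps of Lipschitz constant $\lesssim\max(\Lip g,\mu)$ — the same trade-off as in the distortion arguments earlier in the paper. Concatenating algebraically with $\Phi$ by Proposition~\ref{prop:cat} and realizing the result on $X\times[0,\tau']$ with $\tau'$ chosen so that the $\Phi$-part is realized at length $\tau$ (dilatation $\le\sigma$) and the $S$-part at length $\lesssim P/\mu$ (dilatation $\lesssim\max(\sigma,\mu)$), then feeding this into Theorem~\ref{thm:main} — whose constant depends only on $Y$, $n$, and $m_Y$ — yields a genuine homotopy between $f$ and $g$ in the correct relative class of thickness $\le C(\max(\sigma,\mu)+1)$ and length $\le C(\sigma\tau+P/\mu+1)$. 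Taking $\mu=P^{1-\alpha}$ gives $L_\theta=\max(\sigma,P^{1-\alpha})$ and $L_\lambda=\sigma\tau+P^{\alpha}$, which is the claim. What remains is the routine but lengthy bookkeeping: checking the hypotheses of Theorem~\ref{thm:main} for the realization of $\Phi'$ and tracking constants through Proposition~\ref{prop:cat}.
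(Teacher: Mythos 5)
Your proposal is correct and follows essentially the same route as the paper's proof: apply the relative shadowing principle to $X\times[0,\tau']$, observe that $\Phi$ may lie in the wrong relative class, correct by concatenating with an algebraic self-homotopy of bounded formal length (the bound coming from the integral lattice structure of $\pi_1(Y^X)$ inside the real homotopy group, ultimately \cite[Lemma 5.2(i)]{IRMC}), and then trade length against thickness via the parameter $\alpha$. The one presentational difference is in how the correcting self-homotopy is produced: the paper constructs it explicitly, by first forming a homomorphism into $\Omega^*(X\times S^1)$ from the uncontrolled genuine homotopy $H$ and the realization of $\Phi$, lifting it through $\Omega^*X\otimes\mathbb{R}\langle e\rangle$ via Proposition~\ref{extRel}, and then subtracting a nearby lattice element $\Xi\boxplus\tilde\Psi$; you instead invoke the abstract torsor structure of relative classes over $\Gamma=\pi_1(\Hom(\mathcal{M}_Y^*,\Omega^*X),g^*m_Y)$ with the cocompact lattice $\Lambda$ and take a covering-radius argument. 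These are the same computation in different clothes, and both rely on the same input from \cite{IRMC} for the polynomial bound on the fundamental domain, so nothing is gained or lost; your ``traverse $P/\mu$ copies at scale $\mu$'' picture is a heuristic for what is actually just the dilatation bookkeeping when scaling the realization interval, and the matching $L_\theta=\max\{\sigma,P^{1-\alpha}\}$, $L_\lambda=\sigma\tau+P^\alpha$ confirms the bookkeeping comes out identically.
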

In particular, for results pertaining to nullhomotopic maps, the terms involving
$P$ reduce to a constant depending on $X$ and $Y$.
\begin{proof}
  We would like to construct a controlled homotopy by applying the shadowing
  principle to the pair
  $$(L_\theta X \times [0,L_\lambda], L_\theta X \times \{0,L_\lambda\}),$$
  homotoping an uncontrolled homotopy between $f$ and $g$ to be close to the
  realization
  $$\rho\Phi:\mathcal{M}^*_Y \to \Omega^*(L_\theta X \times [0,L_\lambda]).$$
  If we can do this, then we're done; but for this, the uncontrolled homotopy has
  to be actually DGA homotopic to $\rho\Phi$ rel the ends of the interval.  Such
  a homotopy may not exist.

  To resolve this, we concatenate $\rho\Phi$ with an algebraic self-homotopy of
  $f^*m_Y$; this creates a new DGA map which is DGA homotopic to an honest
  homotopy.  The numbers $L_\lambda$ and $L_\theta$ are obtained by combining the
  measurements of $\Phi$ (algebraic ``thickness'' $\sigma$ and ``length''
  $\sigma\tau$) with those of this self-homotopy, which must therefore be
  reasonably small.

  To find such a small self-homotopy, we first let $H:X \times [0,1] \to Y$ be an
  uncontrolled homotopy between $f$ and $g$.  At this point, the difference
  between the relative homotopy classes of $H$ and $\rho\Phi$ may be quite large;
  to correct this, we will simultaneously concatenate $H$ with an honest
  self-homotopy of $f$ and $\rho\Phi$ with a reasonably small algebraic one so
  that the resulting homotopies are in the same relative DGA homotopy class. 

  We implement this strategy as follows.  Let $\Psi:\mathcal{M}^*_Y \to
  \Omega^*(X \times S^1)$ be a homomorphism which restricts to $\rho\Phi$ on one
  half of the circle and $H^*m_Y$ on the other.  Now let
  $\tilde\Psi:\mathcal{M}_Y^* \to \Omega^*X \otimes
  \mathbb{R}\langle e^{(1)} \rangle$ be obtained using Prop.~\ref{extRel} to lift
  through the diagram
  $$\xymatrix{
    & \Omega^*X \otimes \mathbb{R}\langle e \rangle \ar[d]^{e \mapsto d\theta}
    \ar[r]^-{e \mapsto 0} & \Omega^*X; \\
    \mathcal{M}^*_Y \ar[r]^-{\Psi} \ar@{-->}[ru]^{\tilde\Psi} &
    \Omega^*(X \times S^1) \ar[ru]_{|_{\theta=0}} &
  }$$
  such a lift always exists since the cohomology of the vertical arrow vanishes.
  This lets us define our small algebraic self-homotopy.  Let
  $$\Xi=f^*m_Y+\eta \otimes e:\mathcal{M}_Y^* \to
  \Omega^*X \otimes \mathbb{R}\langle e \rangle$$
  be a homomorphism such that
  $$[\Xi \boxplus \tilde\Psi] \in
  [\mathcal{M}_Y^*,\Omega^*(X) \otimes \mathbb{R}\langle e \rangle]_{f^*m_Y}$$
  is the rationalization of a class in $\pi_1(Y^X;f)$; let
  $F:X \times [0,1] \to Y$ be a (uncontrolled) self-homotopy of $f$ which is a
  representative of that class.  By \cite[Lemma 5.2(i)]{IRMC} and the surrounding
  discussion, we can pick $\Xi$ to be of polynomial length, i.e.~so that for each
  $k$, $\lVert\eta|_{V_k}\rVert_{\mathrm{op}} \leq P(\lVert[f]\rVert_{\Lip})$, where
  $P$ is a polynomial depending on $X$ and $Y$.  Note also that
  $\rho(f^*m_Y+\eta \otimes dt)$ has dilatation $\leq 1$ when scaled to be a map
  $$\mathcal{M}_Y^* \to \Omega^*\left(
  \max\{\Lip f,P(\lVert[f]\rVert_{\Lip})^{1-\alpha}\}X
  \times [0,P(\lVert[f]\rVert_{\Lip})^\alpha]\right),$$
  for any $\alpha \in [0,1]$.  Now we define
  \begin{itemize}
  \item $\tilde\Phi:\mathcal{M}_Y^* \to \Omega^*(L_\theta X \times [0,L_\lambda])$
    to be the homomorphism obtained by concatenating appropriately scaled
    versions of $\rho(f^*m_Y+\eta \otimes t)$ and $\rho\Phi$;
  \item $\tilde H:L_\theta X \times [0,L_\lambda] \to Y$ to be the map obtained by
    concatenating scaled versions of $F$ and $H$.
  \end{itemize}
  Now $\tilde\Phi$ and $\tilde H^*m_Y$ are homotopic rel ends because
  concatenating either of them on both sides with $H^*m_Y$ creates a map
  $\mathcal{M}_Y^* \to \Omega^*(X \times S^1)$ which is homotopic rel
  $X \times \{\theta=0\}$ to $\rho(\Xi \boxplus \tilde\Psi)$.  Moreover,
  $\Dil(\tilde\Phi) \leq 1$.  Applying the shadowing principle to $\tilde\Phi$
  and $\tilde H$ therefore gives the controlled homotopy we desire.
\end{proof}
We now prove Theorems \ref{thm:htpy} and \ref{thm:posw}.
\begin{proof}[Proof of Theorem \ref{thm:htpy}.]
  We first handle the case of nullhomotopies.  In this situation, we will see
  that we are in the case of \eqref{eqn:better}: that is, that we can construct
  the homotopy by extending formally in each degree, without encountering a
  nontrivial obstruction.  To show this, notice first that the map
  $$[\mathcal{M}_Y^*(k+1),\Omega^*X \otimes \langle e \rangle]_0 \to
  [\mathcal{M}_Y^*(k),\Omega^*X \otimes \langle e \rangle]_0$$
  is a surjection.  This can be seen for example as follows.  Recall that
  representatives of elements of the latter group can be represented as
  $0+\eta \otimes e$ for some $\eta$; moreover, the derivation law
  \eqref{Leibniz} implies that $\eta(v)=0$ unless $v$ is indecomposable.
  Therefore the obstruction map
  $$\mathcal{O}:0+\eta \otimes e \mapsto \eta|_{dV_{k+1}} \in H^{k+1}(X;V_{k+1})$$
  is zero, and therefore the previous step in the exact sequence is a surjection.

  This means that maps to $\Omega^*X \otimes \langle e \rangle$ which are zero at
  the basepoint can be extended without obstruction.  To show that the same holds
  for nullhomotopies
  $$\mathcal{M}_Y^*(k) \to \Omega^*X \otimes \langle t,dt \rangle,$$
  fix some uncontrolled algebraic nullhomotopy
  $\Psi:\mathcal{M}_Y^* \to \Omega^*X \otimes \langle t,dt \rangle$ of $f^*m_Y$
  and let $\Phi:\mathcal{M}_Y^*(k) \to \Omega^*X \otimes \langle t,dt \rangle$ be
  the partial nullhomotopy which we would like to extend.  We perform the formal
  equivalent of joining the homotopies at both ends to form a map
  $X \times S^1 \to Y$.  Namely, let $\Xi$ be the algebraic concatenation of
  $\Psi|_{\mathcal{M}_Y^*(k)}$ and $\Phi$, as constructed in Prop.~\ref{prop:cat},
  which is a homotopy of the zero map to itself.  In particular, the image of
  $\Xi$ lies in the subalgebra
  $$\Omega^*X \otimes K \subset \Omega^*X \otimes \langle t,dt \rangle,$$
  where $K$ is the set of elements $\sum_i a_it^i+b_it^idt$ for which
  $\sum_{i \geq 1} a_i=0$.  It is easy to see that $\langle dt \rangle \to K$ is an
  isomorphism on cohomology.  Thus by Prop.~\ref{extRel} we can find a homotopy
  lift
  $$\xymatrix{
    & \Omega^*X \otimes \langle e \rangle \ar[d]^{e \mapsto dt}
    \ar[r]^-{e \mapsto 0} & \Omega^*X, \\
    \mathcal{M}^*_Y(k) \ar[r]^-{\Xi} \ar@{-->}[ru] &
    \Omega^*(X) \otimes K \ar[ru] &
  }$$
  which extends without obstruction to $\mathcal{M}_Y^*(k+1)$ by the argument
  above.  Therefore, again by Prop.~\ref{extRel}, $\Xi$ also extends without
  obstruction.  Finally, applying Prop.~\ref{extRel} to the diagram
  $$\xymatrix{
    \mathcal{M}_Y^*(k) \ar[r]^-{\bar\Xi} \ar@{^{(}->}[d]
    & \Omega^*X \otimes \langle t,dt,s,ds \rangle \ar[d] \ar[r]
    & \Omega^*X \otimes \langle t \rangle/((t-1)t), \\
    \mathcal{M}_Y^*(k+1)\langle V \rangle \ar[r]^-{\text{``}\Xi-\Psi\text{''}}
    & \frac{\Omega^*X \otimes \langle t,dt,s,ds \rangle}
            {((t-1)(s-t),(t-1)(ds-dt),(s-t)dt)}
    \ar[ru]
  }$$
  we produce an extension of $\Phi$, which can therefore also be extended
  without obstruction.

  So let $f:X \to Y$ be a nullhomotopic $L$-Lipschitz map.  To choose a
  nullhomotopy $\Phi$ of $f^*m_Y$, we can use the following simple procedure. Set
  $$\Phi:\mathbb{R}=\mathcal{M}_Y^*(1) \to \Omega^*(X) \otimes
  \mathbb{R}\langle t,dt\rangle$$
  to be the trivial map.  Then at the $(k+1)$st stage, since there is no
  obstruction, we choose an extension as in Proposition
  \ref{prop:extHtpy}\ref*{num:ext} to extend to $\mathcal{M}_Y^*(k+1)$.  By
  \eqref{eqn:better}, we get $\Dil_{L^{-2}}(\Phi) \leq L^2$.  By plugging this into
  Theorem \ref{tech:htpy}, using $\alpha=0$, we get an $L^2$-Lipschitz
  nullhomotopy of $f$ of constant length.

  Now let $f \simeq g:X \to Y$ be non-nullhomotopic maps.  In this general case,
  we still construct a homotopy
  $$\Phi:\mathcal{M}_Y^* \to \Omega^*X \otimes \mathbb{R}\langle t,dt \rangle$$
  by lifting inductively from $k$ to $k+1$.  However, now the lift may be
  obstructed, so we will need to fix our partially-constructed homotopy
  $$\Phi_k:\mathcal{M}_Y^*(k) \to \Omega^*X \otimes
  \mathbb{R}\langle t,dt \rangle$$
  using a self-homotopy of $f^*m_Y$.  To get a bound on $\Phi_{k+1}$ in terms of
  $\Phi_k$, we will use the following roadmap:
  \begin{enumerate}[leftmargin=*]
  \item Using Proposition \ref{prop:extHtpy}\ref*{num:obst}, estimate the size of
    the obstruction class in $H^{k+1}(\Omega^*X;V_{k+1})$ to extending the
    homotopy.
  \item Find an element of
    $[\mathcal{M}_Y^*(k),\Omega^*X \otimes \mathbb{R}\langle e \rangle]_{f^*m_Y}$
    which maps to this obstruction class, with an estimate on the size of a
    representative $\Psi_k$.
  \item Algebraically concatenate the two homotopies; an estimate on the size of
    the new homotopy $\Phi_k^\prime$ is provided by Proposition \ref{prop:cat}.
  \item Finally, by Proposition \ref{prop:extHtpy}\ref*{num:ext}, $\Phi_k^\prime$
    lifts in a quantitative way to
    $$\Phi_{k+1}:\mathcal{M}_Y^*(k+1) \to
    \Omega^*X \otimes \mathbb{R}\langle t,dt \rangle.$$
    This provides an estimate on the dilatation of $\Phi_{k+1}$ in terms of that
    of $\Phi_k^\prime$.
  \end{enumerate}
  Together, all these polynomial estimates will give a $p$ such that
  $\Dil_{L^{-p}} \Phi_k \leq L^p$.  From there we can again apply Theorem
  \ref{tech:htpy} to obtain a genuine nullhomotopy with the given bound.

  We have explicit bounds on the degree to which steps (1), (3), and (4) distort
  the size of the homotopy.  Step (2) is where we must use the fact that $f$ and
  $g$ are honest maps between spaces, and where we lose this explicit bound on
  the degree of the polynomial.  The map
  $$\mathcal{O}: [\mathcal{M}_Y^*(k),
    \Omega^*X \otimes \mathbb{R}\langle e \rangle]_{f^*m_Y} \to H^{k+1}(X,V_{k+1})$$
  is given by $[f^*m_Y+\eta \otimes e] \mapsto [\eta|_{dV_{k+1}}]$.  For a given
  homotopy class of $f$, this is a linear map in terms of the values of $\eta$ on
  indecomposables.  Moreover, if we fix a basis on the indecomposables and on
  $H^{k+1}(X,V_{k+1})$, the resulting matrix has entries given by polynomials in
  the values of $f^*m_Y$ on indecomposables.  Given a class $c$ in the image of
  this matrix, we would like to find a bound on the minimal size of a preimage in
  terms of $\lVert c \rVert$.

  Now, as observed by Sullivan, these groups are the result of tensoring
  $$\pi_1\bigl((Y_{(k)})^X,f\bigr) \to H^{k+1}(X;\pi_{k+1}(Y))$$
  with $\mathbb{R}$.  Indeed, following \cite[\S5]{IRMC}, we can say somewhat
  more.  If we equip
  $[\mathcal{M}_Y^*(k),\Omega^*X \otimes \mathbb{R}\langle e \rangle]_{f^*m_Y}$
  with the norm which assigns to $\ph+\eta \otimes e$ the operator norm of $\eta$
  restricted to the indecomposables of $\mathcal{M}_Y^*(k)$, then the map
  $$\pi_1\bigl((Y_{(k)})^X,f\bigr) \to
  [\mathcal{M}_Y^*(k),\Omega^*X \otimes \mathbb{R}\langle e \rangle]_{f^*m_Y}$$
  is \emph{$P_k(\Lip f)$-surjective} for a polynomial $P_k$ depending on $X$ and
  $Y$, i.e.~every point in the codomain is at most distance $P_k(\Lip f)$ away
  from the image.  In particular, this gives a basis $\mathbf{b}$ for the image
  lattice whose vectors are polynomially bounded in terms of $\Lip f$; we also
  have a polynomial bound on the vectors of $\mathcal{O}(\mathbf{b})$.  Since the
  lattice is mapped to the image of $H^{k+1}(X;\pi_{k+1}(Y))$ in
  $H^{k+1}(X;V_{k+1})$, we also have a fixed lower bound (independent of $f$) on
  the $\dim(\img\mathcal{O})$-dimensional volume of the parallelotope spanned by
  $\mathcal{O}(\mathbf{b})$.  This gives a polynomial lower bound on the shortest
  axis of this parallelotope.  This completes step (2) and the proof.
\end{proof}
\begin{proof}[Proof of Theorem \ref{thm:posw}.]
  For spaces with positive weights, we can take advantage of automorphisms to
  give an alternate construction of an algebraic nullhomotopy.  Fix a map
  $f:X \to Y$ and a family of automorphisms $\ph_t:\mathcal{M}_Y^* \to
  \mathcal{M}_Y^*$ with the desired properties.  Now, let $v \in V_k \cap U_i$ be
  an element of the basis mentioned in the statement of the theorem.  Then we
  inductively define
  $$\Phi(v)=f^*m_Yv \otimes t^i+c(v) \otimes it^{i-1}dt$$
  where $c(v)$ is chosen so that $dc(v)=(-1)^{k+1}f^*m_Yv+c(dv)$.  Here $c(dv)$
  is defined by induction so that
  $$\Phi(dv)=f^*m_Y(dv) \otimes t^i+c(dv) \otimes it^{i-1}dt;$$
  we know $\Phi(dv)$ takes this form by positive weights and the definition of
  $\Phi$ on lower-degree indecomposables.  Moreover, by the same argument as in
  the proof of Theorem \ref{thm:htpy}(ii), there is no obstruction to finding
  such a $c(v)$.

  The $t^i$-coefficients of $\Phi$ always have operator norm $L^k$; moreover, by
  Lemma \ref{lem:IP} and induction on $j$, the $t^{i-1}dt$-coefficients $c(v)$
  can be chosen so that for $v \in V_k \cap \mathcal{M}_j$, the operator norm is
  bounded by $C(X,Y)L^{k+j-1}$.  This gives us $\Dil_{C(X,Y)L^{d-1}} \Phi \leq L$;
  plugging this into Theorem \ref{tech:htpy}, with $\alpha=1$, gives the result.
\end{proof}

\subsection{Maps between spheres} \label{S:spheres}

The previous applications are to problems of great generality.  But the shadowing
principle can also be applied to yield new results in the much more specific
situation of maps between spheres, beyond the results of \cite{CDMW} and
\cite{CMW}.

\subsubsection*{Sharper bounds on nullhomotopies}

As noted before, the bounds of Theorem \ref{thm:posw} are sharp for certain
classes of examples.  On the other hand, they turn out not to be sharp for
example for maps $X \to S^n$, including $X=S^m$.  In that case, in the dimension
range where Hopf invariants play a role (when $n$ is even and $\dim X \geq 2n-1$)
Theorem \ref{thm:posw} only yields a quadratic bound on length.

In fact, this bound cannot be improved simply by choosing antidifferentials in a
clever way; it seems likely that any attempt to construct uniformly low-degree
polynomial nullhomotopies with a sharper bound would be similarly foiled.
Consider the map $f:S^3 \to S^2$ given by the connect sum of
$[L^2\id_{S^2},L^2\id_{S^2}]$ (on the northern hemisphere of $S^3$) and
$-[L^2\id_{S^2},L^2\id_{S^2}]$ (on the southern hemisphere).  The method of
constructing an algebraic nullhomotopy
$$\Phi:\mathcal{M}_{S^2}^*=\langle x^{(2)},y^{(3)} \mid dy=x^2 \rangle \to
\Omega^*(S^3) \otimes \mathbb{R}\langle t,dt \rangle$$
of $f$ used in the proof of Theorem \ref{thm:posw} yields
\begin{align*}
  \Phi(x) &= {f^*d\vol} \otimes t+\alpha \otimes dt \\
  \Phi(y) &= -\eta \otimes 2tdt
\end{align*}
where $d\alpha=-f^*d\vol$ and $d\eta=-{f^*d\vol} \wedge \alpha$.

Note that we can choose $\alpha$ to be zero on the equator; in that case, by
Stokes' theorem, every choice of $\eta$ must satisfy
$$\int_{S^2_{\text{eq}}} \eta=\int_{D^3_{\text{south}}} {f^*d\vol} \wedge \alpha=-L^4.$$
Indeed, consider any other choice $\hat\alpha=\alpha+\beta$ where $d\beta=0$.
Then $\beta=d\gamma$ for some $\gamma$, and therefore
$$\int_{D^3_{\text{south}}} {f^*d\vol} \wedge \hat\alpha-
\int_{D^3_{\text{south}}} {f^*d\vol} \wedge \alpha=
\int_{D^3_{\text{south}}} d({f^*d\vol} \wedge \gamma)=
\int_{S^2_{\text{eq}}} {f^*d\vol} \wedge \gamma=0,$$
since $f^*d\vol=0$ on the equator.  In other words, $\int_{S^2_{\text{eq}}} \eta$ does
not depend on our choices, and every nullhomotopy of this format must have formal
length $L^4$.  I would like to thank the referee for pointing this out.

One can do better by constructing and manipulating genuine, geometric
homotopies.\footnote{Of course, such homotopies have polynomial approximations.}
At the same time, the inductive approach given here forces the thickness of the
homotopy to grow.  I suspect that linear homotopies always exist in this
situation but that finding them will require new tools.  In fact, Sasha Berdnikov
has constructed such linear homotopies in the case $S^3 \to S^2$ using a purely
geometric method \cite{Berd}.
\begin{thm} \label{thm:weird}
  Let $n$ be even and $X$ a finite simplicial complex with $\dim X \geq 2n-1$.
  Then every $L$-Lipschitz nullhomotopic map $X \to S^n$ has an
  $O(L\exp(\kappa\sqrt{\log L}))$-Lipschitz nullhomotopy $X \times [0,1] \to S^n$
  for some constant $\kappa=\kappa(X,n)$.  In particular, this function is
  $o(L^{1+\epsi})$ for every $\epsi>0$.
\end{thm}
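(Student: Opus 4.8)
The plan is to bootstrap the quadratic bound of Theorem~\ref{thm:posw} by a multi-scale construction driven by the scaling automorphisms of $\mathcal M_{S^n}$. First I would make the standard reductions: replace $S^n$ by a compact Riemannian thickening (a Lipschitz homotopy equivalence, so this costs only a constant) so that the shadowing principle applies, and use Proposition~\ref{prop:qSAT} to assume $f$ is simplicial on a subdivision of $X$ at scale $1/L$. The relevant algebraic data is then simple: for the minimal model $\mathcal M_{S^n}=\langle x^{(n)},y^{(2n-1)}\mid dx=0,\ dy=x^2\rangle$, writing $\phi:=f^*m_{S^n}$ we have $\phi(x)=f^*d\vol=:\omega$, a closed $n$-form with $\lVert\omega\rVert_\infty\lesssim L^n$, and $\phi(y)=0$ (one may take $m_{S^n}(y)=0$). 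Since $f$ is nullhomotopic, $\omega=d\alpha$ with $\lVert\alpha\rVert_\infty\lesssim L^n$ by Lemma~\ref{lem:IP}, and $\omega\wedge\alpha$ is exact, so $\phi$ is algebraically nullhomotopic.

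\textbf{Why a single shadowing step does not suffice.} As explained in \S\ref{S:spheres}, any algebraic nullhomotopy $\Psi$ of $\phi$ has $\int_0^1\Psi(y)$ equal to a fixed nonzero multiple of a primitive of $\omega\wedge\alpha$, and for the connect-sum examples $\lVert\omega\wedge\alpha\rVert_\infty\sim L^{2n}$ is genuinely forced, so every such primitive has norm $\gtrsim L^{2n}$. Feeding a single algebraic nullhomotopy through the shadowing principle therefore cannot give a Lipschitz constant better than about $L^{1+1/(2n-1)}$, no matter how cleverly the forms are chosen. So the nullhomotopy must be assembled from many pieces, each the shadow of a \emph{short} algebraic homotopy that moves the map only a little.

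\textbf{The multi-scale interpolation.} Here is the construction I would attempt. Using that $S^n$ has positive weights, let $\rho_t\colon x\mapsto tx,\ y\mapsto t^2y$ be the scaling automorphisms of $\mathcal M_{S^n}$; since $f$ is nullhomotopic, $\phi$ and $\phi\circ\rho_s^{-1}$ lie in the same (null) DGA homotopy class for all $s>0$. Fix $r=r(L)$ and $k\sim\log_r L$, and build maps $f=f_0\simeq f_1\simeq\cdots\simeq f_k$ where $f_i$ is an $(L/r^i)$-Lipschitz (rationally nullhomotopic) map whose model is approximately $\phi''_i\circ\rho_{r^{ni}}$ for some $\phi''_i$ of formal dilatation $\lesssim L/r^i$. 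The final $f_k$ is $O(1)$-Lipschitz and is nullhomotoped at bounded cost, and the finitely many torsion obstructions ignored en route are corrected at bounded cost. Passing from $f_i$ to $f_{i+1}$ should be done with the \emph{relative} shadowing principle: one exhibits a short algebraic homotopy whose nonconstant part is confined to a region where the degree-$(2n-1)$ forms have norm only $\lesssim L^{1+O(1/k)}$ --- morally, over a scale band of multiplicative width $r$ the Hopf-type period exceeds the ``trivial'' value by only a factor $r^{O(1)}$ --- so that stage $i$ contributes a genuine homotopy of length and thickness $\lesssim r^{O(1)}L$. Concatenating the $k$ stages and optimizing, one takes $r=\exp(\Theta(\sqrt{\log L}))$, so that $k\sim\sqrt{\log L}$ and the total overhead $r^{O(1)}k=\exp(O(\sqrt{\log L}))$, producing a nullhomotopy of Lipschitz constant $O(L\exp(\kappa\sqrt{\log L}))$.

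\textbf{The hard part.} The crux --- the ``ad hoc'' heart of the argument --- is the single inductive stage: showing that $f_i$ can be pushed to a genuine $(L/r^{i+1})$-Lipschitz map at only $r^{O(1)}$ times the naive cost rather than a quadratic cost. This needs a delicate choice of the algebraic homotopy at each stage so that its realization genuinely has the claimed dilatation in degree $2n-1$, together with careful bookkeeping of relative homotopy classes to verify the hypotheses of Theorem~\ref{thm:main} at every step and to keep the accumulated finite-order discrepancy bounded. (That this inductive scheme is not the end of the story is shown by Berdnikov's purely geometric construction of \emph{linear} nullhomotopies for maps $S^3\to S^2$.) The final assertion is then immediate: $\exp(\kappa\sqrt{\log L})=L^{\kappa/\sqrt{\log L}}=L^{o(1)}$, so $L\exp(\kappa\sqrt{\log L})=o(L^{1+\epsi})$ for every $\epsi>0$.
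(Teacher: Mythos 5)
Your high-level strategy matches the paper's: a multi-scale bootstrap driven by the scaling automorphisms, shrinking by a factor $\rho(L)=\exp(\Theta(\sqrt{\log L}))$ at each of $\sim\sqrt{\log L}$ levels, then optimizing. The paper packages this as a one-step recurrence
$$\gamma(L)\le 2C\max\bigl\{(C')^{2n/(2n-1)}L\rho(L),\ C'\rho(L)\gamma(L/\rho(L))\bigr\},$$
solved by induction with $\kappa=\sqrt{2\log(2CC')}$, but that is just the wound-up form of your chain $f_0\simeq f_1\simeq\cdots$. Your diagnosis of why a single shadowing step cannot do better than roughly $L^{1+1/(2n-1)}$ (the primitive of $\omega\wedge\alpha$ forces the degree-$(2n-1)$ part of any algebraic nullhomotopy to have size $\sim L^{2n}$) is also right, as is the final $\exp(\kappa\sqrt{\log L})=L^{o(1)}$ step.

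However, you explicitly leave the crux unresolved, and your guess at its resolution points in the wrong direction. You frame the hard part as ``a delicate choice of the algebraic homotopy at each stage so that its realization genuinely has the claimed dilatation in degree $2n-1$,'' i.e.\ keeping the Hopf-type period small over a thin scale band. The paper's mechanism is different and avoids that issue entirely. Choose the minimal model with $m_{S^n}(b)=0$; then $g^*m_{S^n}(b)=0$ identically for \emph{every} genuine map $g$, not just a cleverly chosen one. One stage of the recursion runs as follows: (i) apply the shadowing principle to the scaled-down homomorphism $[C'\rho(L)]^{-n}f^*m_{S^n}$, whose dilatation is $\le L/\rho(L)$, to produce a genuine $(L/\rho(L))$-Lipschitz map $g$ together with a small DGA homotopy $\Phi$ to it; (ii) take a $\gamma(L/\rho(L))$-Lipschitz \emph{geometric} nullhomotopy $G$ of $g$; (iii) build $\Psi$ on $X\times[0,2]$ by concatenating $\Phi$ with $G^*m_{S^n}$, then scaling back up by $[C'\rho(L)]^n$ in degree $n$ and by $[C'\rho(L)]^{2n}$ in degree $2n-1$. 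The key observation is that on the $G$-half $\Psi(b)\equiv 0$, so the degree-$(2n-1)$ inflation factor $[C'\rho(L)]^{2n}$---which naively should cost $\rho(L)^2$---is free there; and on the $\Phi$-half the $b$-component was already controlled by the shadowing principle's output bound. Thus $\Dil\Psi\lesssim\max\{L\rho(L),\ \rho(L)\gamma(L/\rho(L))\}$, and Theorem~\ref{tech:htpy} converts $\Psi$ into a genuine nullhomotopy, closing the recursion with only a $\rho(L)$ overhead. The trick is not a clever choice of primitives but the fact that passing to a \emph{genuine} nullhomotopy at the smaller scale automatically kills the degree-$(2n-1)$ pullback, making the rescaling linear rather than quadratic.
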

\begin{proof}
  Let $\gamma(L)$ be the best possible such function; we will show using a
  recurrence relation that $\gamma$ grows at most as fast as the above function.

  Let $f:X \to S^n$ be a nullhomotopic $L$-Lipschitz map.  The idea is to first
  nullhomotope a ``slightly shrunken copy'' of $f$.  We then expand this
  nullhomotopy again to get a nullhomotopy of $f$.  In a way, this is similar to
  Theorem \ref{thm:posw}; the key point is that in this case working with a
  genuine nullhomotopy lets us make this map not too much bigger.

  Define
  $$m_{S^n}:\mathcal{M}_{S^n}^*=\langle a^{(n)},b^{(2n-1)} \mid da=0,db=a^2 \rangle
  \to \Omega^*S^n$$
  via $m_{S^n}(a)=d\vol$ and $m_{S^n}(b)=0$, and let $C'=C'(m,S^n)$ be the constant
  given in the shadowing principle.  Let $\rho(L)$ be some function
  asymptotically below $L$.  We apply the principle to get a DGA homotopy
  $$\Phi:\mathcal{M}_{S^n}^* \to \Omega^*X \otimes \mathbb{R}\langle t,dt\rangle$$
  between the DGA map $\frac{1}{[C'\rho(L)]^n}f^*m_{S^n}$ and a nullhomotopic
  $L/\rho(L)$-Lipschitz map $g:X \to S^n$, such that
  $\Dil_{C'\rho(L)/L} \Phi \leq L/\rho(L)$.  This is possible since $f^*m_{S^n}$ is
  (algebraically) nullhomotopic, and hence so is its scaled version.  Finally, we
  choose a $\gamma(L/\rho(L))$-Lipschitz nullhomotopy $G:S^m \times [0,1] \to Y$
  of this $g$.

  Now we produce a new homomorphism
  $$\Psi:\mathcal{M}_{S^n}^* \to \Omega^*(X \times [0,2])$$
  satisfying $\Psi|_{t=0}=f^*m_{S^n}$ and $\Psi|_{t=2}=0$, as well as
  $\Dil\Psi \leq \max\{(C')^{\frac{2n}{2n-1}}L\rho(L),C'\rho(L)\gamma(L/\rho(L))\}$,
  by setting
  \begin{align*}
    \Psi(a)|_{(x,s)} &= \left\{\begin{array}{l l}
    {[C'\rho(L)]^n}\Phi(a)|_{t=s} & 0 \leq s \leq 1 \\
    {[C'\rho(L)]^n}G^*d\vol|_{(x,s-1)} & 1 \leq s \leq 2
    \end{array}\right. \\
    \Psi(b)|_{(x,s)} &= \left\{\begin{array}{l l}
    {[C'\rho(L)]^{2n}}\Phi(b)|_{t=s} & 0 \leq s \leq 1 \\
    0 & 1 \leq s \leq 2.
    \end{array}\right.
  \end{align*}
  The key is the observation that since $G^*m_{S^n}(b)=0$, the dilatation of
  $G^*m_{S^n}$ scales linearly as we expand.  Meanwhile, $\Phi$ was small to begin
  with and so the fact that it scales superlinearly doesn't matter very much.

  Finally, we apply Theorem \ref{tech:htpy} to get a $C(\Dil\Psi+1)$-Lipschitz
  nullhomotopy of $f$ on this interval.  Thus (assuming $L$ is large enough to
  ignore the additive constant) we obtain that
  \begin{equation} \label{recur}
    \gamma(L) \leq 2C\max\{(C')^{\frac{2n}{2n-1}}L\rho(L),
    C'\rho(L)\gamma(L/\rho(L))\}.
  \end{equation}
  Now choose $\kappa=\sqrt{2\log(2CC')}$ and $\rho(L)=\exp(\kappa\sqrt{\log L})$.
  Let $L_0$ be such that $\rho(L)<L$ for $L>L_0$ and fix a
  constant $A \geq 2C(C')^{\frac{2n}{2n-1}}$ such that for $1 \leq L \leq L_0$,
  $\gamma(L) \leq AL\rho(L)$.  Such a constant exists simply because we are
  maximizing over a bounded interval.  Now given $L>L_0$, suppose by induction
  that
  $$\gamma(L/\rho(L)) \leq A\frac{L}{\rho(L)}\rho(L/\rho(L)).$$
  Then \eqref{recur} implies that
  $$\gamma(L) \leq \max\left\{AL\rho(L), A \cdot 2CC'L
  \exp\left(\kappa\sqrt{\log L-\kappa\sqrt{\log L}}\right)\right\}.$$
  The term $\kappa\sqrt{\log L-\kappa\sqrt{\log L}}$ has a Taylor expansion
  $$\kappa\sqrt{\log L}-\frac{\kappa^2}{2}-\frac{\kappa^3}{8\sqrt{\log L}}
  -\cdots$$
  with all subsequent terms negative, and so we get that
  $\gamma(L) \leq AL\exp(\kappa\sqrt{\log L})$ as desired.
\end{proof}
Note that while this theorem yields eventual low growth, the number $L_0$ may be
extremely large; (arbitrarily) plugging in $\kappa=5$ yields an intersection
point $L=\rho(L)$ at $L \approx 7.2 \times 10^{10}$.  Before that point, the
theorem does not yield an estimate any better than the quadratic one.  It may be
possible to obtain better estimates in this low-$L$ range by using the same
method with $\rho(L)=L^\epsi$, for various fixed $\epsi>0$, to show directly that
$\gamma(L)=o(L^{1+\epsi})$.

The proof above uses only the following facts about $S^n$:
\begin{itemize}
\item $S^n$ is geometrically formal (or more generally, the map
  $\Omega^*S^n \to H^*(S^n;\mathbb{R})$ admits a splitting algebra homomorphism);
\item $S^n$ admits automorphisms which multiply elements of $H^k$ by $t^k$, for
  some $t$.
\end{itemize}
In fact, the second property is a consequence of formality \cite{Shiga}.  The
same proof (with slight modifications depending on the height of the positive
weight filtration defined in Theorem \ref{thm:posw}) provides a bound of the form
$O(L\exp(\kappa(X,Y)\sqrt{\log L}))$ for nullhomotopies any map from a finite
complex $X$ to a $Y$ for which $\Omega^*S^n \to H^*(S^n;\mathbb{R})$ splits.  For
example, this gives such a bound for nullhomotopies of maps from $S^m$ to a wedge
of $n$-spheres and of maps to symmetric spaces, or more generally to wedges of
symmetric spaces.  This completes part (iii) of Theorem \ref{summary:sym}.

\subsubsection*{Uniformity over the metric}

In \cite{Guth}, Larry Guth asks the following question:
\begin{ques}
  The $n$-dimensional ellipse with principal axes $R_0,\ldots,R_n$ is the set
  defined by
  $$\sum_{j=0}^n (x_j/R_j)^2=1.$$
  Let $E^m$ and $F^n$ be $m$- and $n$-dimensional ellipses, respectively.  If
  $f:E \to F$ is nullhomotopic and $L$-Lipschitz, can we homotope $f$ to a
  constant map through maps of Lipschitz constant at most
  $L^\prime=L^\prime(m,n,L)$, independent of the dimensions of $E$ and $F$?  Can
  this be taken to be $C(m,n)L$ or $C(m,n)L^2$ as dictated by the rational
  homotopy?  What about more complicated metrics on the sphere?
\end{ques}
Such ellipses, and any metric on the sphere, can be closely approximated by
simplicial complexes after sufficient scaling.  Therefore Theorem \ref{thm:posw}
allows us to give a half-answer to this which is, however, less than half
satisfying.  As long as we fix the target metric on the sphere and $L$ is larger
than some constant depending on the domain metric\footnote{Roughly the inverse of
  the mesh size.}, the nullhomotopy can go through maps of Lipschitz constant at
most $C(F,m,n)L$ (if $n$ is odd or $m<2n-1$) or $C(F,m,n)L^2$ (otherwise.)
However, dependence on the target metric is a complete mystery, as it is in this
whole paper.

Recent results in \cite{FFWZ} suggest that the constants depending on the target
space can be quite large even for relatively small target spaces in fixed
dimension.  On the other hand, it may be that things are less dire when we
restrict to spaces of the same homeomorphism type.

\bibliographystyle{amsalpha}
\bibliography{liphom}
\end{document}